\documentclass[11pt]{article}
\usepackage[margin=1in]{geometry}
\usepackage[utf8]{inputenc}
\usepackage[dvipsnames,svgnames]{xcolor}
\usepackage{fullpage}
\usepackage{enumerate}
\usepackage{amsthm,amsmath, amssymb, amsfonts,multicol}
\usepackage{authblk}
\usepackage{verbatim}
\usepackage{tikz}
\usetikzlibrary{fit,matrix,positioning,shapes}
\usetikzlibrary{decorations.shapes}
\usepackage{graphicx,float}
\usepackage{mathtools}
\usepackage{hyperref}
\hypersetup{
    colorlinks=true,
    linkcolor=DarkRed, 
    urlcolor=DarkBlue,  
    citecolor=DarkGreen 
}
\providecommand{\keywords}[1]{\textbf{\textit{Key words---}} #1}

\theoremstyle{plain}
\newtheorem{thm}{Theorem}[section]
\newtheorem{lemma}[thm]{Lemma}
\newtheorem{prop}[thm]{Proposition}
\newtheorem{cor}[thm]{Corollary}
\newtheorem{conj}[thm]{Conjecture}
\newtheorem{remark}[thm]{Remark}
\newtheorem*{thm*}{Theorem}
\newtheorem*{lemma*}{Lemma}
\newtheorem*{prop*}{Proposition}
\newtheorem*{cor*}{Corollary}
\newtheorem*{conj*}{Conjecture}

\theoremstyle{definition}
\newtheorem{defn}[thm]{Definition}
\newtheorem*{defn*}{Definition}
\newtheorem{example}[thm]{Example}

\theoremstyle{remark}

\newcommand{\demph}[1]{\textcolor{DarkBlue}{\emph{#1}}}

\newcommand{\ind}{\mbox{$\perp \kern-5.5pt \perp$}}

\def\mult#1{\text{Mult}(#1)}
\def\multiset#1#2{\ensuremath{\left(\kern-.3em\left(\genfrac{}{}{0pt}{}{#1}{#2}\right)\kern-.3em\right)}}
\DeclareMathOperator{\St}{ST}
\DeclareMathOperator{\st}{st}
\DeclareMathOperator{\Tst}{Tst}
\DeclareMathOperator{\Tchi}{T\chi}
\def\rcomp#1#2{#1_{#2\rightarrow}}
\def\lcomp#1#2{#1_{\leftarrow#2}}
\DeclareMathOperator{\asc}{asc}
\DeclareMathOperator{\rev}{rev}
\newcommand{\rt}{r}

\usepackage[backend=bibtex,maxbibnames=15, isbn=false, doi=false, url=false, giveninits=true]{biblatex}
\title{The Total Chromatic Quasisymmetric Functions of a Graph}
\author{Laura Colmenarejo\footnote{\href{mailto: lcolmen@ncsu.edu}{lcomen@ncsu.edu}, Department of Mathematics, North Carolina State University, Raleigh, NC, 27605. \\ Partially supported by the Simons Foundation.} \ and Ian Klein\footnote{\href{mailto: iankleinmath@gmail.com}{iankleinmath@gmail.com}, Department of Mathematics, North Carolina State University, Raleigh, NC, 27605}}
\date{\empty}

\begin{document}

\maketitle

\begin{abstract}
   In this paper, we introduce and study two variants of the chromatic quasisymmetric function of a graph: the total chromatic quasisymmetric function via vertex labeling and via acyclic orientations. The original definition of the chromatic quasisymmetric function of a graph by Shareshian and Wachs depends on a labeling of the vertices of the graph, which directly affects the properties of the coefficients appearing in the decomposition of the chromatic quasisymmetric function of a graph into different bases. Motivated by this, we construct the first variant of the chromatic quasisymmetric function of a graph by normalizing it with respect to all the labelings of the vertices. The second variant is motivated by the \emph{tree isomorphism conjecture} and is constructed in terms of acyclic orientations. 
   
   We investigate the properties of the coefficients in the expansion in the monomial quasisymmetric basis for both variants and provide a comparative analysis. Furthermore, we derive explicit formulas for the coefficients in the monomial decomposition of the two variants for the star graph. For the labeling-based variant, these coefficients arise from a binomial identity for which we provide a combinatorial proof.
\end{abstract}

\keywords{chromatic quasisymmetric functions, Stanley-Stembridge conjecture, Shareshian-Wachs conjecture, acyclic orientations, binomial identity}

\section{Introduction}

In 1995, Stanley~\cite{Stanley1995ASymmetricFunctionGeneralization} defined a generalization of the chromatic polynomial in the framework of symmetric functions. Given a (finite) graph $G=(V,E)$, the chromatic symmetric function of $G$ is defined as $\chi_G (x):= \sum_{\kappa}x^\kappa$, where the sum runs over all proper colorings of the vertices of the graph and $x^\kappa = x_1^{k_1}x_2^{k_2}\cdots$, with $k_i$ being the number of vertices colored $i$. Since then, the study of the chromatic symmetric function of graphs has been a source of interesting research within algebraic combinatorics and other related areas. Among the most interesting problems, we highlight the \emph{Stanley-Stembridge conjecture}, also known as the \emph{$e$-positivity conjecture}, which has been recently proved by Hikita. 
\begin{thm}[Stanley-Stembridge conjecture~\cite{hikita2024proofstanleystembridgeconjecture,hikita2025qtchromaticsymmetricfunctions}]\label{thm: Stanley-Stembridge conjecture}
    Let $G$ be a unit interval graph and consider the expansion of the chromatic symmetric function of $G$ in the $e$-basis, $\chi_G (x)= \sum_{\lambda \vdash n}a_\lambda e_\lambda$. Then, $\chi_G$ is $e$-positive; that is, $a_\lambda \in\mathbb{Z}_{\geq 0}$.
\end{thm}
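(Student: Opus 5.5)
Since this is the recently proved Stanley--Stembridge conjecture, I would not attempt a self-contained argument. I would follow the strategy of Hikita~\cite{hikita2024proofstanleystembridgeconjecture}. The plan is to pass to the Shareshian--Wachs refinement, prove a stronger positivity statement there by a recursion on the graph, and specialize $q=1$ at the end.

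\textbf{Step 1: reduction.} I would first reduce to natural unit interval orders. Every unit interval graph $G$ on $n$ vertices admits a labeling $1,\dots,n$ for which it is the incomparability graph of a natural unit interval order. Such an order is encoded by a Hessenberg function, equivalently a Dyck path $\mathbf{m}$. Since $\chi_G$ depends only on the isomorphism class of $G$, it suffices to treat these labeled graphs $G_{\mathbf{m}}$. With this labeling one has the chromatic quasisymmetric function $X_{G}(x;q)$ of Shareshian and Wachs. It is known to be symmetric for natural unit interval orders, and it satisfies $X_G(x;1)=\chi_G(x)$. The goal therefore becomes the stronger statement that the coefficients of $X_{G_{\mathbf{m}}}(x;q)$ in the $e$-basis are polynomials in $q$ with nonnegative coefficients. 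Alternatively, one could aim for the weaker statement that these coefficients are rational functions that are nonnegative at $q=1$; that weaker form is what Hikita's argument actually delivers.

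\textbf{Step 2: the recursion.} The core is a recursion on $\mathbf{m}$ obtained by removing the last vertex. Following Hikita, I would introduce auxiliary symmetric functions indexed by $\mathbf{m}$ together with extra data recording how the last vertex sits relative to earlier ones, essentially a choice of ``insertion position.'' I would then prove that $X_{G_{\mathbf{m}}}$ equals a combination of $e$-expansions of smaller instances, multiplied by an $e_k$. The coefficients in this combination are explicit rational functions of $q$, built from $q$-integers. The tools are the modular law of Guay-Paquet and Abreu--Nigro, which linearly relates $X$ for Dyck paths differing by one box, together with the multiplicativity $X_{G\sqcup H}=X_GX_H$. Given such a recursion, induction on $n$ shows that every $e$-coefficient is a sum of products of these rational functions. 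At $q=1$ the functions become nonnegative numbers; Hikita interprets them as transition probabilities of a Markov-type process on ``partial colorings.'' That interpretation gives $a_\lambda\ge 0$, and integrality is automatic because $\chi_G$ has integer coefficients in the monomial basis and the $e$-basis is unimodular.

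\textbf{Main obstacle.} The hard step is finding the correct family of auxiliary functions and verifying that the recursion closes with positive coefficients. Naive deletion--contraction produces signs. The modular law alone determines $X$ from its values on disjoint unions of complete graphs, but it does not exhibit positivity. I would first try to guess the coefficients by computing small cases with $n\le 5$ and fitting $q$-binomial-type expressions. I would then verify the recursion by checking that both sides satisfy the modular law and agree on the base cases. This check is the technical heart of Hikita's papers~\cite{hikita2024proofstanleystembridgeconjecture,hikita2025qtchromaticsymmetricfunctions}, and it is where I expect most of the work to lie.
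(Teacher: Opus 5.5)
Your plan matches the paper, which gives no proof of this theorem and simply cites Hikita~\cite{hikita2024proofstanleystembridgeconjecture,hikita2025qtchromaticsymmetricfunctions}; deferring the technical heart to those papers, namely the construction of the auxiliary family and the modular-law verification of the positive recursion, is exactly what the paper does. One caution: the $\mathbb{N}[q]$-positivity you first set as the goal in Step 1 is the still-open Shareshian--Wachs conjecture (Conjecture~\ref{conj: SW-conjecture}), so only your weaker route is available; also, the $e$-coefficients of $X_G(x;q)$ are themselves polynomials in $\mathbb{Z}[q]$, and it is the summands in Hikita's expansion of them that are rational functions in $q$, nonnegative at $q=1$.
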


In 2012, Shareshian and Wachs~\cite{shareshian2012chromaticquasisymmetricfunctionshessenberg} introduced a refinement of the chromatic symmetric function of a graph in the framework of quasisymmetric functions. Given a graph $G=(V,E)$ and a labeling of its vertices so that $V=\{1,2,\ldots, |V|\}$, the chromatic quasisymmetric function of a graph $G$ is defined as
$$
\chi_G(x;q) = \sum_{\kappa}q^{\asc(\kappa)}x^\kappa,
$$ 
where $\asc(\kappa)$ counts the number of edges $\{i,j\}$ such that $i < j$ and $\kappa(i) < \kappa(j)$, and the sum runs over all the proper colorings of the vertices of $G$. 
This refinement was introduced as a way to connect the $e$-positivity conjecture, and more broadly the chromatic symmetric functions framework, to the cohomology of the Hessenberg varieties. 

Shareshian and Wachs also introduced a quasisymmetric version of the Stanley-Stembridge conjecture.
\begin{conj}~\cite{shareshian2012chromaticquasisymmetricfunctionshessenberg}\label{conj: SW-conjecture}
    Let $G=(V,E)$ be a unit interval graph and consider the natural order in $V$. Then, $\chi_G(x;q)$ is symmetric and $e$-positive.
\end{conj}
Notably, Hikita's proof of the Stanley-Stembridge conjecture does not imply the Shareshian-Wachs conjecture, leaving the latter still open. However, there has been some progress made towards this conjecture, for example~\cite{abreu2020chromaticsymmetricfunctionsmodular, Colmenarejo_2023, MGP2, lee2022explicitformulasepositivitychromatic}.

The starting point of our paper is the dependency of $\chi_G(x;q)$ on \emph{a natural labeling of the vertices of $G$}. For many families, including unit interval graphs, there is a natural, canonical labeling. However, that is not the case for all graphs. Moreover, the coefficients appearing in $\chi_G(x;q)$, which are polynomials in $\mathbb{Z}[q]$, can be significantly different depending on the labeling. For instance, consider the following example. 

\begin{example}\label{ex: intro}
Consider the star graph $\St_5$ with the following two labelings:
\begin{center}
\begin{tikzpicture}
    \node[shape=circle,draw=black] (2) at (0,0) {2};
    \node[shape=circle,draw=black] (1) at (-1,1) {1};
    \node[shape=circle,draw=black] (3) at (1,1) {3};
    \node[shape=circle,draw=black] (4) at (-1,-1) {4};
    \node[shape=circle,draw=black] (5) at (1,-1) {5};
    \node at (0,-1.5) {$G$};
    \path [-] (2) edge  (1);
    \path [-](2) edge (3);
    \path [-](2) edge (4);
    \path [-](2) edge (5); 
    \node[shape=circle,draw=black] (A) at (4,0) {1};
    \node[shape=circle,draw=black] (B) at (3,1) {2};
    \node[shape=circle,draw=black] (C) at (5,1) {3};
    \node[shape=circle,draw=black] (D) at (3,-1) {4};
    \node[shape=circle,draw=black] (E) at (5,-1) {5};
    \node at (4,-1.5){$H$};
    \path [-] (A) edge  (B);
    \path [-](A) edge (C);
    \path [-](A) edge (D);
    \path [-](A) edge (E); 
\end{tikzpicture}
\end{center}
Consider the expansion of $\chi_{G}(x;q)$ and $\chi_{H}(x;q)$ in the monomial quasisymmetric basis,  $\chi_{G}(q) = \sum_{\alpha}c_\alpha(q)M_\alpha$ and $\chi_{H}(q) = \sum_{\alpha}b_\alpha(q)M_\alpha$, respectively. These coefficients are displayed in Table~\ref{tab: example ST_5}, and we notice that the coefficients of $\chi_{H}(x;q)$ are more interesting from a combinatorial perspective than the coefficients of $\chi_{H}(x;q)$.
\begin{table}[ht]
    \centering
    \begin{tabular}{ |c|c|c| } 
 \hline
 $\alpha$ & $c_\alpha(q)$ & $b_\alpha(q)$\\
 \hline
$(1,1,1,1,1)$ & $6q^4 + 36q^3 + 36q^2 + 36q + 6$ & $24(q^4 + q^3 + q^2 + q + 1)$ \\
\hline
$(1,1,1,2)$ & $3q^4 + 18q^3 + 9q^2 + 6q$ & $12(q^4 + q^3 + q^2)$\\
\hline
$(1,1,2,1)$ & $3q^4 + 12q^3 + 9q^2+12q$ & $12(q^4 + q^3 + 1)$\\
 \hline
 $(1,2,1,1)$ & $12q^3 + 9q^2 + 12q + 3$ & $12(q^4 + q + 1)$\\
 \hline
  $(2,1,1,1)$ & $6q^3 + 9q^2 + 18q + 3$ & $12(q^2 + q + 1)$\\
 \hline
 $(1,1,3)$ & $q^4 + 4q^3 + 3q^2$ & $4(q^4 + q^3)$\\
 \hline 
  $(1,3,1)$ & $4q^3 + 4q$ & $4(q^4 + 1)$\\
 \hline
 $(3,1,1)$ & $3q^2 + 4q + 1$ & $4(q + 1)$\\
\hline
 $(1,2,2)$ & $6q^3$ & $6q^4$\\
 \hline 
  $(2,1,2)$ & $3q^3 + 3q$ & $6q^2$\\
\hline
$(2,2,1)$ & $6q$ & $6$\\
\hline
 $(1,4)$ & $q^3$ & $q^4$\\
 \hline
$(4,1)$ & $q$ & $1$\\
\hline
\end{tabular}
    \caption{Coefficients of the $M$-expansion of the chromatic quasisymmetric function for the star graph $\St_5$ with the two different labelings described in Example~\ref{ex: intro}.}
    \label{tab: example ST_5}
\end{table}

\end{example}

In this paper, we introduce and study two variants of the chromatic quasisymmetric function of a graph. The first variant is constructed from the chromatic quasisymmetric function of a graph by normalizing it with respect to all the labelings of the vertices of the graph. 
\begin{defn*} Let $G$ be a graph. We define the \demph{total labeling chromatic quasisymmetric function of $G$} as
$$\Tchi^{\mathcal{L}}_G(q) := \sum_{L}\chi_G(x;q),$$ 
where we sum over all possible labelings $L$ of the vertices of $G$.
\end{defn*} 
In Section~\ref{subsec: labelings}, we prove some properties of the coefficients, which are polynomials of $q$, appearing in the $M$-expansion of $\Tchi^{\mathcal{L}}_G(q)$ for any graph $G$. For instance, we show that the total labeling chromatic quasisymmetric function of a graph behaves well with respect to the disjoint union of graphs (Proposition~\ref{prop: Labeling Disjoint Union}), that the coefficients are palindromic and of maximal degree (Proposition~\ref{prop: palindrom Label}), and that they are invariant under reversing the indexing composition (Theorem~\ref{thm: reverse Labeling}). 
We attempted to find a variant of the deletion-contraction relation on $\Tchi_G^{\mathcal{L}}(q)$, but found some empirical evidence that makes us skeptical about the existence of one. 

In Section~\ref{subsec: total star labelings}, we study the total labeling CQF for the star graph by analyzing a normalized version of it (see Definition~\ref{def: Tstar}), for which the coefficients in its $M$-expansion are of the following form. 
\begin{thm*}
Let $\St_n$ be the star graph on $n$ vertices, and consider the $M$-expansion of $\Tst_n(q) = \sum_{\alpha}c_\alpha(q)M_\alpha$. Then, 
$$c_\alpha(q) = \sum_{i : \alpha_i = 1} \sum_{j\geq 0}\mult{\rcomp{\alpha}{i}}\mult{\lcomp{\alpha}{i}}\binom{n}{j}q^j[n-2j]_q,
$$ 
where $\mult{\beta}$ is essentially a multinomial coefficient, $\rcomp{\alpha}{i}$ and $\lcomp{\alpha}{i}$ are defined in terms of $\alpha$, and $[j]_q$ refers to the $q$-analog of the nonnegative integer $j$.
\end{thm*}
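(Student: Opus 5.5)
The plan is to compute the $M$-coefficient directly from the definition by summing over colorings and labelings. This reduces the statement to a single binomial identity in $q$, which is the heart of the proof. I take $\Tst_n(q)$ to be $\Tchi^{\mathcal{L}}_{\St_n}(q)$ normalized by the symmetry among leaf labelings, i.e.\ divided by $(n-1)!$ (Definition~\ref{def: Tstar}). In that form the sum runs over the label $k\in\{1,\dots,n\}$ of the center together with an averaging over which labels the leaves receive.

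\emph{Step 1 (center color).} Fix a composition $\alpha=(\alpha_1,\dots,\alpha_\ell)\models n$. The coefficient of $M_\alpha$ counts, with weight $q^{\asc}$, the proper colorings using colors $1,\dots,\ell$ with color $m$ used exactly $\alpha_m$ times. In a star every leaf is adjacent to the center, so the center's color class is a singleton: the center gets some color $i$ with $\alpha_i=1$, and the leaves use the remaining colors. This produces the outer sum over $i$ with $\alpha_i=1$.

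\emph{Step 2 (leaf assignment).} Let $s=\alpha_1+\cdots+\alpha_{i-1}$. The leaves split into a set $S$ of size $s$ colored below $i$ and its complement colored above $i$. Given $S$, the colorings of $S$ by colors $1,\dots,i-1$ are counted by $\mult{\lcomp{\alpha}{i}}$, and those of the complement by colors $i+1,\dots,\ell$ by $\mult{\rcomp{\alpha}{i}}$. Neither count affects $\asc$, since leaves are pairwise nonadjacent.

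\emph{Step 3 (ascent statistic).} Only center–leaf edges matter. Suppose the center has label $k$.
\begin{itemize}
\item A leaf with label $<k$ contributes an ascent iff it lies in $S$.
\item A leaf with label $>k$ contributes an ascent iff it lies outside $S$.
\end{itemize}
Let $t$ be the number of small-labeled leaves lying in $S$. Then $\asc=t+(n-k)-(s-t)$. After normalization, summing over $k$ and over the choice of $S$ relative to the labels gives
\[
c_\alpha(q)=\sum_{i:\alpha_i=1}\mult{\rcomp{\alpha}{i}}\mult{\lcomp{\alpha}{i}}\sum_{k=1}^{n}\sum_{t}\binom{k-1}{t}\binom{n-k}{s-t}q^{\,n-k-s+2t}.
\]

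\emph{Step 4 (the identity).} It remains to show that the inner double sum equals $\sum_{j\ge 0}\binom{n}{j}q^j[n-2j]_q$ for every $0\le s\le n-1$, with the convention $[m]_q=0$ for $m\le 0$. In particular this asserts that the inner sum is independent of $s$. As a check, for $n=2$ both $s=0$ and $s=1$ give $1+q$.

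This identity is the main obstacle. I would prove it bijectively, in the spirit of the combinatorial proof mentioned in the abstract. The pair $(k,\text{choice of }S)$ can be encoded as a word of length $n$: the center is a marked position $k$, and the other positions are letters in/out of $S$, so the weight counts in-letters left of the mark plus out-letters right of the mark. I would then do the following.
\begin{itemize}
\item Build a sign- and weight-preserving bijection, or an involution, that moves one letter across the mark, to prove independence of $s$.
\item Evaluate at $s=0$, where the sum collapses to $\sum_k\binom{k-1}{0}q^{n-k}=[n]_q$. This is consistent with the $j=0$ term; the $j\ge1$ terms must then cancel or vanish, for example after regrouping the $q^j[n-2j]_q$ as differences $\binom{n}{j}-\binom{n}{j-1}$ of consecutive coefficients.
\end{itemize}
If this bookkeeping does not close up, my fallback is to expand both sides as polynomials in $q$ and compare coefficients with a Vandermonde-type argument. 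I expect reconciling the precise range of $j$ with $s$ to be the delicate point.
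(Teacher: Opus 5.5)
Your Steps 1--3 are correct. With the center label renamed $r$, your inner sum
\[
F_{n,s}(q)=\sum_{r=1}^{n}\sum_{t}\binom{r-1}{t}\binom{n-r}{s-t}\,q^{\,n-r-s+2t},\qquad s=L_{\alpha_i},
\]
is exactly Proposition~\ref{prop: star QSF decomp} summed over the root label, written without the case split.

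The gap is Step~4: the identity you set out to prove is false, because $F_{n,s}$ is \emph{not} independent of $s$. Your own evaluation gives $F_{n,0}(q)=[n]_q$. For $n\ge 3$, however, the term $\binom{n}{1}q[n-2]_q$ is a nonzero polynomial with positive coefficients, so the $j\ge1$ terms can neither cancel nor vanish. Concretely, $F_{3,0}(q)=1+q+q^2$ but $F_{3,1}(q)=1+4q+q^2$.

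The $\sum_{j\ge 0}$ in the introductory statement is loose shorthand. The precise version in Section~\ref{subsec: total star labelings} lets $j$ run only up to $\min\{L_{\alpha_i},R_{\alpha_i}\}$, i.e.\ $\min\{s,n-1-s\}$ in your notation. Read literally, the introductory formula already contradicts Table~\ref{tab: calpha for St_4}: for $\alpha=(1,1,1,1)$ it would give $16(1+5q+5q^2+q^3)$ instead of $16+32q+32q^2+16q^3$. So the letter-moving bijection you want for ``independence of $s$'' cannot exist, and anchoring everything at $s=0$ cannot work.

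What actually has to be proved is
\[
F_{n,s}(q)=\sum_{j=0}^{\min\{s,n-1-s\}}\binom{n}{j}q^j[n-2j]_q ,
\]
and the proposal gives no argument for it. The Vandermonde fallback is not a plan as it stands. Summing $\binom{r-1}{t}\binom{n-r}{s-t}$ over $t$ only recovers $F_{n,s}(1)=n\binom{n-1}{s}$. Fixing the exponent couples $r$ and $t$, so that both upper indices move with the summation index.

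The paper proceeds as follows:
\begin{enumerate}
\item It reduces to $s\le n-1-s$ (Lemma~\ref{lem: symmetry of B-coeff}) and to $k\le\lfloor\frac{n-1}{2}\rfloor$ (palindromicity).
\item It groups the root labels as $r_0+2j$ to get $[q^k]F_{n,s}=\sum_{j=0}^{\min\{s,k\}}\binom{s+k-2j}{s-j}\binom{n-1-s-k+2j}{j}$ (Theorem~\ref{thm: first step c_alpha}).
\item It proves that this equals $\sum_{l=0}^{\min\{s,k\}}\binom{n}{l}$ (Theorem~\ref{prop: binomial identity}) using the marked-box/barrier model of Section~\ref{sec: binomial identity}.
\end{enumerate}
So the true ``independence'' is that $[q^k]F_{n,s}$ depends only on $\min\{s,k\}$, not that $F_{n,s}$ is independent of $s$.

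Your word encoding can be steered toward this. For a fixed in/out word, sliding the mark one step changes the weight by $\pm1$. Hence $[q^k]F_{n,s}$ counts visits to height $k$ by $\pm1$ paths of length $n-1$ from height $n-1-s$ to height $s$. Splitting at the visit reproduces the paper's sum. For $s\le k\le n-1-s$ that sum is $[x^s]\,C(x)^{n-1-2s}/(1-4x)$, where $C(x)$ is the Catalan generating function. One can check, via $\sum_j\binom{a+2j}{j}x^j=C(x)^a/\sqrt{1-4x}$, that this equals $\sum_{l=0}^{s}\binom{n}{l}$.
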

The combinatorial proof of this formula follows from a combinatorial argument in terms of the proper colorings of $\St_n$ and from the following binomial identity.
\begin{thm*}
Given $n$, $k$, and $s$, with $s\leq k$,
$$\sum_{j\geq 0} \binom{s + k -2j}{s-j}\binom{n-1-s - k +2j}{n-1-s-k +j} = \sum_{\ell=0}^s \binom{n}{\ell}.$$
\end{thm*}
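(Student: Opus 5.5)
The plan is to prove the identity by a generating-function computation: reindex the sum so that it becomes a Cauchy product of two series of the form $\sum_j \binom{2j+r}{j}x^j$, and then extract the coefficient by a change of variables (Lagrange inversion). A bijective proof could be looked for afterwards, but the algebraic route looks the shortest.

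\textbf{Reindexing.} Put $t=s-j$, so that $s+k-2j = (k-s)+2t$ and $s-j=t$. Put $m=n-1-s-k$. The second factor is $\binom{m+2j}{m+j}=\binom{m+2j}{j}$. The left-hand side then becomes
$$\sum_{t+j=s}\binom{(k-s)+2t}{t}\binom{m+2j}{j}.$$
The hypothesis $s\le k$ ensures $k-s\ge 0$, so the first factor is an honest binomial coefficient for every $t$. For the second factor I will check the range of $j$ for which $m+2j\ge 0$. Where the upper entry can be negative, I will fix the convention for $\binom{m+2j}{m+j}$ so that it agrees with the paper's intended terms (these are presumably zero).

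\textbf{Catalan generating function.} Let $C(x)=\frac{1-\sqrt{1-4x}}{2x}$. I will use the classical identity
$$\sum_{j\ge0}\binom{2j+r}{j}x^j=\frac{C(x)^r}{\sqrt{1-4x}}.$$
By the reindexing step, the left-hand side of the theorem equals
$$[x^s]\,\frac{C(x)^{(k-s)+m}}{1-4x}=[x^s]\,\frac{C(x)^{\,n-1-2s}}{1-4x}.$$
This is the step I expect to be the main obstacle. When $m<0$, the identity must be justified for the range of $j$ actually contributing, or else the terms where the binomial formula and the power series $C^m$ disagree must be isolated and shown to vanish (or to match the paper's convention).

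\textbf{Lagrange substitution.} Substitute $x=u/(1+u)^2$. Then
$$C(x)=1+u,\qquad 1-4x=\Bigl(\frac{1-u}{1+u}\Bigr)^2,\qquad dx=\frac{1-u}{(1+u)^3}\,du.$$
Write $[x^s]F=\operatorname{res}_x F\,x^{-s-1}$ and set $r=n-1-2s$. The residue becomes
$$\operatorname{res}_u\,\frac{(1+u)^{r}(1+u)^2}{(1-u)^2}\cdot\frac{(1+u)^{2s+2}}{u^{s+1}}\cdot\frac{1-u}{(1+u)^3}=[u^s]\,\frac{(1+u)^{\,r+2s+1}}{1-u}.$$
Since $r+2s+1=n$, this equals
$$[u^s]\,\frac{(1+u)^{n}}{1-u}=\sum_{\ell=0}^{s}\binom{n}{\ell},$$
which is the right-hand side. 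The remaining checks are routine: the substitution is a valid change of variables for formal residues, since $x=u+O(u^2)$, and the negative-exponent bookkeeping from the previous step holds.

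If a combinatorial proof is preferred instead, I would interpret $\sum_{\ell\le s}\binom{n}{\ell}$ as the number of $\pm1$ lattice paths of length $n$ ending at height at least $n-2s$. I would then decompose each such path at a distinguished last-visit point, so that $\binom{2j+r}{j}$ counts the pieces on either side of that point.
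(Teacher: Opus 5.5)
Your proof is correct, and it takes a genuinely different route from the paper's. After your reindexing, the left side is $[x^s]$ of the product of $C(x)^{k-s}/\sqrt{1-4x}$ and $C(x)^{m}/\sqrt{1-4x}$. Under $x=u/(1+u)^2$ the residue integrand becomes $(1+u)^{n}(1-u)^{-1}u^{-s-1}$, whose residue is $\sum_{\ell\le s}\binom{n}{\ell}$.

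The paper instead argues combinatorially. It models the left side as $\sum_j B_j$, counting rows of $n$ boxes with $s$ marks that satisfy a ``$j$-condition'' relative to a barrier at position $s+k+1-2j$. It regroups this as $\sum_l K_l$ by the number of conditions satisfied, and translates that into the statistic of marked boxes ``not at home''. It then shows by explicit bijections that the counts $T(n,s,i,b_0)$ depend neither on $b_0=k-s+1$ nor on $s$. Finally it obtains the ballot numbers $\binom{n}{i}-\binom{n}{i-1}$ from a Pascal-type recursion and telescopes.

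Your approach buys brevity. It is complete modulo two standard facts: the identity $\sum_j\binom{2j+r}{j}x^j=C^r/\sqrt{1-4x}$, and invariance of formal residues under $x=u+O(u^2)$. The paper, by contrast, leaves several inverse bijections to the reader. Your approach also explains in one line the independence of $k$ that the paper singles out as the hard part: $C^{k-s}C^{n-1-s-k}=C^{n-1-2s}$. The paper's approach buys a combinatorial model and the refined enumeration by the not-at-home statistic, which a generating-function computation does not provide.

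As for the step you flagged as the main obstacle: under the paper's hypotheses $s\le k\le\lfloor\frac{n-1}{2}\rfloor$ (explicit in Theorem~\ref{thm:binomial_identity}), $m=n-1-s-k\ge 0$. Hence $\binom{m+2j}{m+j}=\binom{m+2j}{j}$ for all $j\ge 0$, and there is nothing to check. Do not settle the case $m<0$ by declaring the offending terms zero: for $n=3$, $s=1$, $k=2$ that convention gives $1$ on the left and $4$ on the right. The form that remains true for $m<0$ reads $\binom{m+2j}{j}$ as a generalized binomial coefficient. Since the Catalan identity holds for every real $r$, your argument proves that form verbatim.
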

The proof of this binomial identity turns out to be more challenging than expected, as a key part of its proof relies on showing the independence of the parameters $k$ and $s$, which do not appear in the coefficients on the right-hand side of the formula in the statement. Section~\ref{sec: binomial identity} is dedicated to presenting a combinatorial model and its proof. 

\bigskip

The second variant of the chromatic quasisymmetric function of a graph is motivated by a question that Stanley posted when introducing the chromatic symmetric function of a graph~\cite{Stanley1995ASymmetricFunctionGeneralization}. Stanley asked whether $\chi_G(x)$ distinguishes trees, and the positive answer to his question is known as the \emph{tree isomorphism conjecture}.
\begin{conj}[Tree isomorphism conjecture]\cite{Stanley1995ASymmetricFunctionGeneralization}\label{conj: tree isomorphism sym}
    Let $G$ and $H$ be non-isomorphic trees. Then, $\chi_G(x) \neq \chi_H(x)$.
\end{conj}
The conjecture has been computationally verified for all trees of up to 29 vertices~\cite{heil2018algorithmcomparingchromaticsymmetric}, but the main statement remains open. There has also been work done to verify this conjecture for certain classes of trees, 
for example~\cite{alisteprieto2012propercaterpillarsdistinguishedsymmetric, Huryn_2020,martin2007distinguishingtreeschromaticsymmetric,orellana2013graphsequalchromaticsymmetric}.

Motivated by this conjecture, the second variant of the chromatic quasisymmetric function of a graph is defined using the definition of the ascents in terms of orientations. Given a graph $G=(V,E)$, an orientation $\gamma$ on the edges $E$ is a function that assigns an orientation to each edge $\{u,v\}\in E$; that is $u \rightarrow v$ or $v \rightarrow u$. Then, we define the ascents of $G$ with respect to $\gamma$ as the number of edges $u\rightarrow v$ such that $\kappa(u) < \kappa(v)$. 
With this concept of ascents, which we denote by $\asc^\gamma(q)$, Ellzey~\cite{ellzey2017directedgraphgeneralizationchromatic, ellzey2017chromaticquasisymmetricfunctionsdirected} introduced a variant of the chromatic quasisymmetric function on graphs with respect to an orientation. The definition is exactly the same as for $\chi_G(x;q)$ with this variant of ascents, $\asc^\gamma(q)$, and we denote it by  $\chi_G^\gamma(x;q)$. Aval~\cite{Aval_2023} generalized the tree isomorphism conjecture in this framework. 
\begin{conj}~\cite{Aval_2023}\label{conj: tree isomorph qSym}
    Let $G$ and $H$ be non-isomorphic trees with an orientation. Then, $\chi_G^\gamma(x;q) \neq \chi_H^\gamma(x;q)$.
\end{conj}
Note that Aval's conjecture is stated in terms of directed graphs, which are essentially graphs with orientations.
Therefore, the second variant of the chromatic quasisymmetric function of a graph is constructed from the chromatic quasisymmetric function of a graph defined in terms of acyclic orientations, which are orientations that do not create cycles.
\begin{defn*}
    Let $G$ be a graph and $\Gamma$ be the set of acyclic orientations of $G$. Define the \demph{total orientation chromatic quasisymmetric function} of $G$ as 
    $$\Tchi_G^{\Gamma}(q) := \sum_{\gamma \in \Gamma}\chi_G^\gamma(x;q).$$
\end{defn*}

Similarly to our study of the total labeling chromatic symmetric function, in Section~\ref{subsec: acyclic orientations}, we prove some properties of the coefficients, which are polynomials of $q$, appearing in the $M$-expansion of $\Tchi^{\Gamma}_G(q)$ for any graph $G$. For instance, we show that it behaves well with respect to the disjoint union of graphs (Proposition~\ref{prop: Acyclic Disjoint Union}), that the coefficients are palindromic and of maximal degree (Theorem~\ref{thm: palindrom Acyclic}), and that they are invariant under reversing the indexing composition (Theorem~\ref{thm: reverese Acyclic}). Moreover, we also show that $\Tchi^{\Gamma}_G(q)$ satisfies a \emph{deletion-near-contraction} relation (Theorem~\ref{thm: Acyclic Del/nearcontraction}), like the one presented by Orellana et al~\cite{OrellanaStarBasis}. We also give a formula for $\Tchi_G^{\Gamma}(q)$ in terms of $\chi_G$ when $G$ is a tree, which also shows that $\Tchi_G^{\Gamma}(q)$ is a symmetric function when $G$ is a tree. By definition of the total orientation chromatic symmetric function, if $\Tchi_G^{\Gamma}(q)$ distinguishes trees, then so does $\chi_G$. Thus, there is a potential pathway to show that the quasisymmetric version of the tree isomorphism conjecture implies the symmetric version.

In this case, since the star graph is a tree, we give a formula for the coefficients appearing in the $m$-decomposition of the total orientation chromatic symmetric function of the star graph, which is a straightforward computation (Theorem~\ref{thm: TSto on trees}).

The outline of the paper is as follows: In Section~\ref{sec: background}, we introduce the background necessary for (quasi)symmetric functions, graphs, and chromatic (quasi)symmetric functions. Section~\ref{sec: TCQF by labelings and AO} is dedicated to introducing the total chromatic quasisymmetric functions for both variants. We also study some of the general properties and compare the two variants. In Section~\ref{sec: The star graph}, we look at the particular case of the star graph and compute explicit formulas for these total chromatic quasisymmetric functions. The formula for the total labeling chromatic quasisymmetric function reduces to a binomial identity. Finally, in Section~\ref{sec: binomial identity}, we present a model for the binomial identity and give a proof of it.

\section{Background}\label{sec: background}
In this section, we introduce the background on symmetric and quasisymmetric functions, on graphs, and on the theory of chromatic symmetric and quasisymmetric functions necessary for our study. For more details on the theory of symmetric and quasisymmetric functions, see~\cite{EC2}, and for more details on the theory of chromatic symmetric and quasisymmetric functions, see~\cite{shareshian2012chromaticquasisymmetricfunctionshessenberg}.

\subsection{Compositions and partitions}

A sequence $\alpha=(\alpha_1,\ldots, \alpha_\ell)$ is a composition of $n$, denoted $\alpha \models n$, if $\alpha_i \in \mathbb{Z}_{>0}$ and $|\alpha|:=\sum_i \alpha_i = n$. We refer to $\ell$ as the length of $\alpha$, $\ell(\alpha)$. If we allow zero entries, $\alpha_i\in \mathbb{Z}_{\geq 0}$, we say that $\alpha$ is a weak composition, and we denote by $\ell(\alpha)$ the number of nonzero entries. Moreover, if the entries in $\alpha$ are nonzero and in weakly decreasing order, that is $\alpha_1\geq \alpha_2\geq \ldots\geq \alpha_\ell>0$, we say that $\alpha$ is a partition of $n$, denoted $\alpha \vdash n$. Note that we usually denote (weak) compositions by $\alpha, \beta, \gamma$, and partitions by $\lambda, \mu, \nu$. Given a (weak) composition $\alpha = (\alpha_1, \ldots, \alpha_\ell)$, we denote by $\text{sort}(\alpha)$ the partition obtained by reordering the entries of $\alpha$ in weakly decreasing order and deleting the zero entries if necessary. Suppose that $\alpha = (\alpha_1, \ldots, \alpha_\ell)$ is a weak composition such that $\alpha_\ell\neq 0$; that is, $\alpha$ could have zero entries, but we do not consider the zero entries at the end. Then, we define the \demph{reverse composition} as $\alpha^{\rev} = (\alpha_\ell, \alpha_{\ell-1}, \ldots, \alpha_1)$.

\subsection{Symmetric and quasisymmetric functions}

Let $x=\{x_1,x_2,\ldots\}$ be an alphabet with infinitely many variables. A formal power series in $x$ over $\mathbb{Q}[q]$ is a function of the form $f(x;q) = \sum_{\alpha} c_\alpha(q) x^\alpha$, where the sum runs over all weak compositions $\alpha$, $c_\alpha\in \mathbb{Q}[x]$, and $x^\alpha = x_1^{\alpha_1}x_2^{\alpha_2}\cdots$. We say that $f(x;q)$ is homogeneous of degree $n$ if $c_\alpha(q) \neq 0$ for some $\alpha \models n$. Moreover, we denote by $[x^\alpha]f(x;q)$ the coefficient of $x^\alpha$ in $f(x;q)$, and for the coefficients $c_\alpha(q) \in \mathbb{Q}[q]$, we denote by $[q^k]c_\alpha(q)$ the coefficient of $q^k$ in $c_\alpha(q)$, for $k\in\mathbb{Z}_{\geq 0}$. Note also that we may omit the alphabet $x$ if it is clear from context, and write simply $f(q)$ or $f$ if the coefficients are in $\mathbb{Q}[q]$ or $\mathbb{Q}$, respectively.

We say that $f(x;q)$ is a symmetric function if it is invariant under the action of permutations; that is, $f(x_1,x_2, \ldots;q) = f(x_{\sigma(1)}, x_{\sigma(2)}, \ldots;q)$ for any permutation $\sigma$. We denote by $\Lambda$ the ring of symmetric functions on $x$ with coefficients in $\mathbb{Q}[q]$, and by $\Lambda^n$ the ring of homogeneous symmetric functions on $x$ of degree $n$ with coefficients in $\mathbb{Q}[q]$. Then, $\Lambda := \Lambda^0 \oplus \Lambda^1 \oplus \Lambda^2 \oplus \cdots$, where $\Lambda^0 = \mathbb{Q}[q]$.
As a vector space, $\Lambda^n$  has many interesting bases, all of them indexed by partitions of $n$. We introduce here the three bases that are more relevant for our study. The monomial basis is defined as $m_\lambda = \sum_{\alpha} x^\alpha$, where the sum runs over all weak compositions $\alpha$ such that $\text{sort}(\alpha)=\lambda$. 
The elementary basis is defined as
    $$ 
    e_k = \sum_{i_1 < i_2 < \cdots < i_k}x_{i_1}x_{i_2}\cdots x_{i_k}, \qquad \text{ and } \quad 
    e_\lambda = e_{\lambda_1}e_{\lambda_2}\cdots e_{\lambda_{\ell(\lambda)}}.
    $$
The power sum basis is defined as
    $$
    p_{k} = \sum_{i}x_i^k, \qquad \text{ and } \quad p_\lambda = p_{\lambda_1}p_{\lambda_2}\cdots p_{\lambda_{\ell(\lambda)}}.
    $$

Next, we introduce the quasisymmetric functions. We say that a formal power series $f(x;q)$ over $\mathbb{Q}[q]$ is quasisymmetric if for any weak composition $\alpha=(\alpha_1, \ldots, \alpha_\ell)$, the coefficient of $x^\alpha$ equals the coefficient of $x_{i_1}^{\alpha_1}x_{i_2}^{\alpha_2}\cdots x_{i_\ell}^{\alpha_\ell}$ for any set of subindices $0<i_1<i_2<\cdots < i_\ell$. We denote the ring of quasisymmetric functions on $x$ with coefficients in $\mathbb{Q}[q]$ by $QSym$, and the ring of homogeneous quasisymmetric functions of degree $n$ by $QSym^n$. As for $\Lambda^n$, there are many interesting bases for quasisymmetric functions, all of them indexed by compositions. The monomial basis is the only one relevant for our study, and it is defined as 
$M_\alpha := \displaystyle{\sum_{i_1 < i_2< \cdots i_\ell}x_{i_1}^{\alpha_1}x_{i_2}^{\alpha_2}\cdots x_{i_\ell}^{\alpha_\ell}}$.

\subsection{Graphs}\label{subsec: graphs}

Consider a graph $G=(V(G),E(G))$, where $V(G)$ is the set of vertices and $E(G)$ is the set of edges, which are unordered pairs $e=\{u,v\}$ with $u,v\in V(G)$. We denote the graph as $G=(V,E)$ if $G$ is known by context, and the edges as $e=\{u,v\}$. Note that we only work with finite graphs, and so the sets of vertices and edges are finite.
If $e=\{u,v\}\in E(G)$, we say that $u$ and $v$ are adjacent. The degree of a vertex $v\in V(G)$, $\deg(v)$, is the number of edges $e=\{u,v\}\in E(G)$ for some $u\in V(G)$, and we say that $e=\{u,v\}\in E(G)$ is an internal edge if $\deg(u),\deg(v)\geq 2$.

Our \emph{favorite family of graphs} in this paper is the \demph{star graph} $\St_n$ defined as the graph with $n$ vertices and such that there is a vertex $\rt$ called the \demph{root} and all the edges are of the form $\rt v$ with $v\in V$, $v\neq \rt$. Note that despite the name for this special vertex, we do not consider $\St_n$ as a rooted tree. Also, we reserve the notation $\rt$ for the root of the star graph. 

We say that a graph $G=(V,E)$ is simple if it contains no edges of the form $\{v,v\}$, with $v\in V$, and no repeated edges. All the graphs considered in our study are simple.
Given $G=(V,E)$, we define a path as a sequence of distinct vertices $v_1, v_2, \ldots, v_k$ in $V$ such that $\{v_i,v_{i+1}\} \in E$ for $i=1,\ldots k-1$. A cycle is a path such that $v_1=v_k$. We say that $G$ is connected if there exists a path between any two vertices of the graph. We say that $G$ is a tree if it is a connected graph with no cycles, and a forest if we only require the graph not to have cycles.

We define an orientation of $G$ as a map $\gamma$ that assigns a direction to each edge $e\in E$. That is, for every $e=\{u,v\}$, we either assign $(u,v)$ ($u\rightarrow v$) or $(v,u)$ ($v\rightarrow u$). We say that an orientation is acyclic if the resulting graph has no cycles of the form $v_1, v_2, \ldots, v_k$ with $v_i\in V$ and $(v_i,v_{i+1})\in E$ for all $i=1,\ldots k-1$, and $v_1=v_k$. We denote by $\Gamma(G)$, or simply $\Gamma$, the set of all acyclic orientations of $G$.
Note that a graph with an orientation is also known as a directed graph. 

We consider two decorations on the vertices of the graph $G=(V,E)$. We define a labeling of the vertices of the $G$ as a bijective map $L: V \longrightarrow \{1,2,\ldots, |V|\}$. That is, a labeling of $G$ is an ordering of the vertices of $G$. 
We refer to $L(v)$ as the label of $v\in V$, and we denote by $\mathcal{L}(G)$, or simply $\mathcal{L}$, the set of all labelings of $G$.
We also define a proper coloring of $G$ as a map $\kappa: V \longrightarrow \mathbb{Z}_{>0}$ such that if $u,v$ are adjacent vertices, then $\kappa(u)\neq \kappa(v)$. We refer to $\kappa(v)$ as the color of the vertex $v$, and we denote by $\mathcal{K}(G)$, or simply $\mathcal{K}$, the set of all the proper colorings of $G$. We associate the proper coloring $\kappa$ with the weak composition given by the number of vertices colored $i$, that is $(\#\kappa^{-1}(1), \#\kappa^{-1}(2),\ldots)$. We abuse the notation and denote this weak composition by $\kappa$. We say a composition $\alpha$ is the \demph{associated composition of $\kappa$} if $\alpha$ is obtained from the weak composition $\kappa$ by deleting the zero entries in $\kappa$. 

For the star graph, we may need to keep track of the label of the root, and so we denote by \demph{$\St_n^\rt$} the star graph together with a labeling $L$ such that $L(\rt)=\rt$, where we abuse notation and denote by $\rt$ both the root and its labeling. Note that there are $(n-1)!$ such labelings. 

Next, we define a statistic on the graphs related to the proper colorings. Let $G$ be a simple graph, with a labeling $L$ and a proper coloring $\kappa$. We say that the edge $e=\{u,v\}\in E$ is an ascent if $L(u)<L(v)$ and $\kappa(u)<\kappa(v)$. We denote by \demph{$\asc^L(\kappa)$} the number of ascents in $G$ with respect to the labeling $L$. 
If $G$ is a simple graph with an orientation, we say that $e=(u,v)\in E$ is an ascent if $\kappa(u) < \kappa(v)$. We denote by \demph{$\asc^\gamma(\kappa)$} the number of ascents in $G$ with respect to the orientation $\gamma$.

Finally, we define some operations on graphs. The first operation is the disjoint union of two graphs $G$ and $H$, written $G\sqcup H$, which is the graph with set of vertices $V(G) \sqcup V(H)$ and set of edges $E(G) \sqcup E(H)$. 
The other operations are defined over a graph $G=(V,E)$, with a vertex $e=\{u,v\}\in E$. We define \demph{$G\backslash e$} as the graph obtained from $G$ by deleting the edge $e$. We define \demph{$G/e$} as the graph obtained from $G$ by \demph{contracting the edge} $e$; that is, deleting $e$, $u$, and $v$, adding a new vertex $v_e$, and replacing the edges of the form $\{u,w\}\in E$ and $\{v,w\}\in E$ with edges of the form $\{v_e,w\}$. In this case, we say that $G/e$ contracts $e$ into the contracted vertex $v_e$. Finally, we define \demph{$G \odot e$} as the graph obtained by first contracting $e=\{u,v\}$ into the contracted vertex $v_e$ and then attaching a vertex $v'$ to $v_e$, denoting this edge by $\ell_e=\{v',v_e\}$. Note that $v'$ is only attached to $v_e$ and it is called the \demph{near-contracted edge} of $G\odot e$ and the graph $G\odot e$ is called the \demph{near-contraction of $e$}. Note that if $e$ is not an internal edge (i.e., $\deg(u)=\deg(v)\geq 2$), $G\odot e=G$. This operation was first introduced by Aliste-Prieto, de Mier, Orellana, and Zamora in 2022~\cite{alisteprieto2022markedgraphschromaticsymmetric}. 

\begin{example}\label{example: graph operations}
In this example, we present the graph $G$ on the right, with the edge $e=\{u,v\}$, and the graphs $G\setminus e$, $G \odot e$, and $(G\odot e)\setminus \ell_e$ to its left. 
\begin{figure}[ht]
\centering
\begin{tikzpicture}
    \node[shape=circle,draw=black] (2) at (0,0) { };
    \node[shape=circle,draw=black] (1) at (-1,1) {$u$};
    \node[shape=circle,draw=black] (3) at (1,1) { };
    \node[shape=circle,draw=black] (4) at (-1,-1) {$v$};
    \node[shape=circle,draw=black] (5) at (1,-1) { };
    \node at (0,-1.7) {$G$};
    \node at (-1.2,0) {$e$};
    \path [-] (2) edge  (1);
    \path [-](2) edge (3);
    \path [-](2) edge (4);
    \path [-](2) edge (5); 
    \path [-](4) edge (1);

    \node[shape=circle,draw=black] (A) at (4,0) { };
    \node[shape=circle,draw=black] (B) at (3,1) { $u$};
    \node[shape=circle,draw=black] (C) at (5,1) { };
    \node[shape=circle,draw=black] (D) at (3,-1) { $v$};
    \node[shape=circle,draw=black] (E) at (5,-1) { };
    \node at (4,-1.7){$G \setminus e$};
    \path [-] (A) edge  (B);
    \path [-](A) edge (C);
    \path [-](A) edge (D);
    \path [-](A) edge (E);

\node[shape=circle,draw=black] (A) at (8,0) { };
    \node[shape=circle,draw=black] (B) at (7,1) { $v'$};
    \node[shape=circle,draw=black] (C) at (9,1) { };
    \node[shape=circle,draw=black] (D) at (7,-1) { $v_e$};
    \node[shape=circle,draw=black] (E) at (9,-1) { };
    \node at (8,-1.7){$G \odot e$};
    \node at (6.7, 0){$\ell_e$};
    \path [-] (D) edge  (B);
    \path [-](A) edge (C);
    \path [-](A) edge (D);
    \path [-](A) edge (E);

\node[shape=circle,draw=black] (A) at (12,0) { };
    \node[shape=circle,draw=black] (B) at (11,1) { $v'$};
    \node[shape=circle,draw=black] (C) at (13,1) { };
    \node[shape=circle,draw=black] (D) at (11,-1) { $v_e$};
    \node[shape=circle,draw=black] (E) at (13,-1) { };
    \node at (12,-1.7){$(G \odot e) \setminus \ell_e$};
    \path [-](A) edge (C);
    \path [-](A) edge (D);
    \path [-](A) edge (E);    
\end{tikzpicture}
    \label{fig: graphOperations}
\end{figure}
\end{example}

\subsection{Chromatic symmetric and quasisymmetric functions}

In~\cite{Stanley1995ASymmetricFunctionGeneralization}, Stanley introduced the chromatic symmetric function of a graph as a generalization of the chromatic polynomial.
Given a graph $G=(V,E)$, we define the chromatic symmetric function (CSF) of $G$ as  
$$
\chi_G (x) := \sum_{\kappa\in \mathcal{K}(G)}x^\kappa, \qquad\quad \text{ where } x^{\kappa} = x_1^{\#\kappa^{-1}(1)} x_2^{\#\kappa^{-1}(2)}\cdots.
$$ 

We note that $\chi_G(x)$ is a symmetric function over $\mathbb{Q}$ since permuting the variables corresponds to permuting the colors, and this does not affect the conditions on proper colorings. 

As symmetric functions, the CSF of graphs are very interesting. For instance, the CSF of a disjoint union of graphs equals the product of the CSF of each of the graphs~\cite[Proposition 2.3]{Stanley1995ASymmetricFunctionGeneralization}. Moreover, they essentially form a basis for the symmetric functions. To see this, we consider a family of graphs $\{G_k\}_{k\geq0}$, where $G_k$ is a connected graph in $k$ vertices, and for a partition $\lambda=(\lambda_1, \ldots,\lambda_\ell)$, we define $G_\lambda$ as the forest whose connected components are graphs in this family, $G_\lambda := G_{\lambda_1}\sqcup\ldots\sqcup G_{\lambda_\ell}$. Since the disjoint union of graphs corresponds to the product of the corresponding CSF of the graphs, we have that then, $\chi_{G_\lambda}(x)= \prod_k \chi_{G_{\lambda_k}}(x)$.

\begin{thm}~\cite[Theorem 5]{cho2016chromaticbasessymmetricfunctions}
Let $\{G_k\}_{k\geq0}$ be a family of connected graphs such that $G_k$ has $k$ vertices. Then, $\{\chi_{G_\lambda}(x)\}_{\lambda\vdash n}$ is a $\mathbb{Q}$-basis for $\Lambda^n$. Furthermore, the functions $\chi_{G_k}(x)$ are algebraically independent over $\mathbb{Q}$ and $\Lambda = \mathbb{Q}[\chi_{G_1}, \chi_{G_2}, \dots]$.
\end{thm}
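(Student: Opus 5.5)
The plan is to prove first that the family $\{\chi_{G_\lambda}(x)\}_{\lambda\vdash n}$ is related to the monomial basis $\{m_\lambda\}_{\lambda\vdash n}$ by a triangular matrix with nonzero diagonal entries. Both families have cardinality $p(n)$, so this gives a $\mathbb{Q}$-basis of $\Lambda^n$. We already know each $\chi_{G_\lambda}$ is symmetric and homogeneous of degree $n$, so it expands as $\chi_{G_\lambda}=\sum_{\mu\vdash n} c_{\lambda\mu} m_\mu$. Here $c_{\lambda\mu}$ counts proper colorings of $G_\lambda$ with color classes of sizes $\mu_1,\mu_2,\ldots$ in colors $1,2,\ldots$; equivalently, it counts ordered partitions of $V(G_\lambda)$ into stable sets of those sizes.

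The key step is the triangularity, which I would phrase in terms of the conjugate partition $\lambda'$. Each component $G_{\lambda_k}$ is connected, so each stable set meets it in at most $\lambda_k$ vertices. This alone does not bound sizes usefully, so instead count the number of classes. A proper coloring with $\ell(\mu)$ classes must use at least $\max_k \chi(G_{\lambda_k})$ colors, which is again not sharp. A cleaner route is to change basis via the power sums. Stanley's expansion
$$\chi_G=\sum_{S\subseteq E}(-1)^{|S|}p_{\lambda(S)},$$
where $\lambda(S)$ is the partition of component sizes of the spanning subgraph $(V,S)$, is the tool. For $G=G_\lambda$, every $\lambda(S)$ is a refinement of $\lambda$, since components of $(V,S)$ sit inside the components of $G$. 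Moreover, $\lambda(S)=\lambda$ exactly when $S$ restricts to a connected spanning subgraph on each $G_{\lambda_k}$. Hence $\chi_{G_\lambda}=c_\lambda p_\lambda+\sum_{\mu\text{ strictly refining }\lambda} d_{\lambda\mu}p_\mu$, with $c_\lambda=\prod_k \sum_{S}(-1)^{|S|}$, the sum running over connected spanning edge sets $S$ of $G_{\lambda_k}$.

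It remains to check $c_\lambda\neq 0$. This is the main obstacle. The factor $\sum_{S\text{ connected spanning}}(-1)^{|S|}$ is, up to sign, the linear coefficient of the chromatic polynomial of $G_{\lambda_k}$, namely $(-1)^{k-1}T_{G_k}(1,0)$. This is nonzero for connected graphs by the classical fact that the coefficients of the chromatic polynomial of a connected graph on $k$ vertices strictly alternate in sign, so in particular the coefficient of $t$ is nonzero. Alternatively, it counts acyclic orientations with a unique fixed source, by Greene–Zaslavsky, and one exists for connected graphs. I would cite this fact. Since refinement is compatible with dominance, ordering partitions by refinement makes the transition matrix from $\{\chi_{G_\lambda}\}$ to $\{p_\lambda\}$ triangular with nonzero diagonal. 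Because $\{p_\lambda\}$ is a $\mathbb{Q}$-basis of $\Lambda^n$, so is $\{\chi_{G_\lambda}\}$.

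Finally, for the algebraic statements, note that $\chi_{G_\lambda}=\prod_k\chi_{G_{\lambda_k}}$ by the disjoint union property. The basis statement in every degree therefore says that the monomials in $\chi_{G_1},\chi_{G_2},\ldots$ span $\Lambda$ and are linearly independent over $\mathbb{Q}$. This is exactly $\Lambda=\mathbb{Q}[\chi_{G_1},\chi_{G_2},\ldots]$ with the $\chi_{G_k}$ algebraically independent. One point needs care: a polynomial relation among the $\chi_{G_k}$ splits into homogeneous components, each of which is a linear relation among some $\chi_{G_\lambda}$, $\lambda\vdash n$, and is therefore trivial.
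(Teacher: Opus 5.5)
The paper gives no proof of this statement. It is quoted from Cho--van Willigenburg, so there is no internal argument to compare yours with.

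Your proof is correct, and complete modulo the classical nonvanishing fact you cite. The steps all hold:
\begin{itemize}
\item In Stanley's expansion for $G_\lambda$, every type $\lambda(S)$ is obtained by splitting parts of $\lambda$, and any split increases the length. So $\lambda(S)=\lambda$ exactly when $S$ is connected and spanning on each component.
\item The $p_\lambda$-coefficient therefore factors as the product of the linear coefficients of the components' chromatic polynomials (by Whitney's subgraph expansion), and each factor is nonzero.
\item Triangularity in a linear extension of refinement then gives the basis of $\Lambda^n$.
\item Your passage through homogeneous components correctly yields free generation and algebraic independence.
\end{itemize}

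This is the standard power-sum argument for chromatic bases. Its one non-formal input has several equivalent forms. It can be stated as $\mu(\hat 0,\hat 1)\neq 0$ in the bond lattice (Rota), as $T_H(1,0)>0$, or, in the $e$-basis, as the existence of an acyclic orientation with a unique sink (Stanley's sink theorem). You could also run the argument from the generator side. The relation $\chi_{G_k}=c_kp_k+(\text{a polynomial in }p_1,\dots,p_{k-1})$ with $c_k\neq 0$ is a triangular change of free generators of $\mathbb{Q}[p_1,p_2,\dots]$. That gives generation, algebraic independence and the degree-$n$ basis statement all at once, instead of deducing the algebra statements from the bases.

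Two presentation fixes:
\begin{itemize}
\item Delete the opening plan of triangularity against $\{m_\mu\}$ and the two abandoned attempts. Nothing there is proved or used, and that triangularity can actually fail. With $G_2=K_2$ and $G_3$ the path on three vertices, you get $\chi_{G_{(3)}}=m_{21}+6m_{111}$, $\chi_{G_{(2,1)}}=2m_{21}+6m_{111}$ and $\chi_{G_{(1,1,1)}}=m_3+3m_{21}+6m_{111}$. No row has a single nonzero entry, so no ordering makes the matrix triangular.
\item Fix the index clash. The factor for the component $G_{\lambda_k}$ is $(-1)^{\lambda_k-1}T_{G_{\lambda_k}}(1,0)$, not $(-1)^{k-1}T_{G_k}(1,0)$.
\end{itemize}
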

This result tells us that any such family of graphs defines a basis of the symmetric functions. In~\cite{OrellanaStarBasis} the authors consider the family of star graphs, $G_k = \St_k$, so that the set of functions $\{\st_\lambda(x)\}_{\lambda\vdash n}$, defined by $\st_k (x)= \chi_{\St_{\lambda_k}}(x)$ and $\st_\lambda (x):= \prod_k\st_k(x)$, forms a basis of $\Lambda$, called the \demph{star basis}. This is a very interesting basis, and we highlight here the change of basis between the star basis and the power sum basis.  
\begin{thm}~\cite[Proposition 2.3]{OrellanaStarBasis}\label{thm: st to p basis}
For $n\geq 0$,
$$
\st_{n+1} (x)= \sum_{r=0}^n(-1)^r\binom{n}{r}p_{(r+1,1^{n-r})}(x)
\qquad\text{ and } \qquad
p_{n+1}(x)= \sum_{r=0}^n(-1)^r\binom{n}{r}\st_{(r+1,1^{n-r})}(x).
$$
\end{thm}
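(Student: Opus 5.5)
The plan is to prove the first identity by computing both sides in the monomial basis and identifying them coefficient by coefficient. The second identity then follows by inverting the resulting triangular relation.

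\textbf{Step 1: expand $p_{(r+1,1^{n-r})}$ as a sum over colorings.} Since $p_{r+1}p_1^{n-r} = \sum_i x_i^{r+1}\prod_{j=1}^{n-r} \sum_{i_j} x_{i_j}$, this product is the generating function of all colorings $\kappa$ of the $n+1$ vertices of $\St_{n+1}$ (root $\rt$ and leaves $1,\ldots,n$) with the following property. For a fixed set $R$ of $r$ leaves, the vertices in $\{\rt\}\cup R$ all receive the same color as $\rt$, and the remaining leaves are colored arbitrarily. Summing over the $\binom{n}{r}$ choices of $R$ gives
$$\binom{n}{r}p_{(r+1,1^{n-r})}=\sum_{|R|=r}\ \sum_{\kappa:\ \kappa(v)=\kappa(\rt)\ \forall v\in R} x^\kappa .$$

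\textbf{Step 2: inclusion--exclusion.} For an arbitrary coloring $\kappa$, let $A(\kappa)$ be the set of leaves with $\kappa(v)=\kappa(\rt)$. Then
$$\sum_{r}(-1)^r\binom{n}{r}p_{(r+1,1^{n-r})}=\sum_\kappa x^\kappa\sum_{R\subseteq A(\kappa)}(-1)^{|R|},$$
and the inner sum equals $1$ if $A(\kappa)=\emptyset$ and $0$ otherwise. The colorings with $A(\kappa)=\emptyset$ are exactly the proper colorings of the star, so the sum is $\st_{n+1}(x)$. This is the standard Stanley-type argument, and it needs no care beyond the bookkeeping in Step 1.

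\textbf{Step 3: invert.} The inversion is the part I expect to require the most attention, since the second formula involves products $\st_{r+1}\st_1^{n-r}$ rather than a single $\st$. I would first note that $\st_1=p_1$, so $\st_{(r+1,1^{n-r})}=\st_{r+1}p_1^{n-r}$. Substituting the first identity for each $\st_{r+1}$ (with $r$ in place of $n$) gives
$$\sum_r(-1)^r\binom{n}{r}\st_{r+1}p_1^{n-r} = \sum_r\sum_{s\le r}(-1)^{r+s}\binom{n}{r}\binom{r}{s}p_{s+1}p_1^{n-s}.$$
Using $\binom{n}{r}\binom{r}{s}=\binom{n}{s}\binom{n-s}{r-s}$, the coefficient of $p_{s+1}p_1^{n-s}$ becomes $\binom{n}{s}\sum_{t}(-1)^t\binom{n-s}{t}=\binom{n}{s}\delta_{s,n}$. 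The whole sum therefore collapses to $p_{n+1}$, which is binomial inversion.

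The only point to watch is that the argument in Step 3 uses the first identity for all smaller star sizes; this is immediate because Steps 1 and 2 hold for arbitrary $n$.
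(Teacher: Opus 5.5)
Your proof is correct and complete. In Steps 1--2, $\binom{n}{r}p_{(r+1,1^{n-r})}$ is exactly the generating function of pairs $(R,\kappa)$ with $|R|=r$ and $R\subseteq A(\kappa)$. Swapping the sums and using $\sum_{R\subseteq A}(-1)^{|R|}=\delta_{A,\emptyset}$ leaves precisely the proper colorings of the star. In Step 3, you use $\st_{(r+1,1^{n-r})}=\st_{r+1}p_1^{n-r}$ (since $\st_1=p_1$) and the coefficient computation $\sum_{r}(-1)^{r-s}\binom{n}{r}\binom{r}{s}=\binom{n}{s}\delta_{s,n}$. Together these give the second identity, and the small cases $n=1,2$ check out.

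The paper gives no proof of this statement of its own; it simply quotes it from Orellana et al. So there is no in-paper argument to compare your route against. Your Steps 1--2 are exactly Stanley's edge-subset expansion $\chi_G=\sum_{S\subseteq E}(-1)^{|S|}p_{\lambda(S)}$ specialized to $\St_{n+1}$. There, an $r$-subset of edges spans one component of size $r+1$ plus $n-r$ isolated leaves, so $\lambda(S)=(r+1,1^{n-r})$, and you could replace those steps by a one-line citation. Your Step 3 binomial inversion is then the only extra input needed for the second formula, and it makes the argument self-contained.
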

\begin{remark}
Note that we use uppercase letters to denote the star graph, $\St_n$, and lowercase letters for the CSF of the star graph, $\st_n= \chi_{\St_n}(x)$.
\end{remark}

The CSF of a graph does not satisfy the usual deletion-contraction formula that chromatic polynomials do. However, it satisfies a variant of it involving the near-contraction operation. 
\begin{thm}~\cite[Proposition 3.1]{alisteprieto2022markedgraphschromaticsymmetric}
   Let $G$ be a simple graph and consider an edge $e\in G$. Then, \begin{equation}\label{eq: del/contract sym}
   \chi_G(x) = \chi_{G\setminus e}(x) - \chi_{(G\odot e)\setminus \ell_e}(x) + \chi_{G\odot e}(x).
   \end{equation}
\end{thm}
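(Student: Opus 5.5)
The plan is to derive the identity from the classical deletion-contraction relation for monochromatic edges, applied twice: once to the edge $e$ in $G$, and once to the near-contracted edge $\ell_e$ in $G\odot e$. The contracted graph $G/e$ then appears in both relations and cancels.

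First I would prove the auxiliary identity
$$\chi_{G\setminus e}(x) = \chi_G(x) + \chi_{G/e}(x)$$
for any simple graph $G$ and edge $e=\{u,v\}$. Split the proper colorings $\kappa$ of $G\setminus e$ according to whether $\kappa(u)\neq\kappa(v)$ or $\kappa(u)=\kappa(v)$.
\begin{itemize}
\item The colorings with $\kappa(u)\neq\kappa(v)$ are exactly the proper colorings of $G$, with the same monomial $x^\kappa$.
\item The colorings with $\kappa(u)=\kappa(v)=c$ are in bijection with proper colorings of $G/e$: give $v_e$ the common color $c$. A neighbor $w$ of $v_e$ in $G/e$ is a neighbor of $u$ or of $v$ in $G\setminus e$, so properness transfers in both directions.
\end{itemize}

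This second bijection changes the weight: two vertices of color $c$ become one. So the precise statement is that the monochromatic-on-$e$ colorings of $G\setminus e$ contribute $\chi_{G/e}$ up to this weight shift. The cleanest way to handle this is to note that the same weight-shifted quantity appears again in the second application. Concretely, I would never evaluate it: I would write $D(G,e)$ for the generating function of proper colorings of $G\setminus e$ that are monochromatic on $e$. Then $\chi_{G\setminus e}=\chi_G+D(G,e)$ holds exactly, by the first bullet.

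Next I would apply this to $H=G\odot e$ and its edge $\ell_e=\{v',v_e\}$. Here $H\setminus\ell_e=(G\odot e)\setminus\ell_e$, so
$$\chi_{(G\odot e)\setminus \ell_e}=\chi_{G\odot e}+D(G\odot e,\ell_e).$$
The key step is to show $D(G\odot e,\ell_e)=D(G,e)$. I would give a weight-preserving bijection between:
\begin{itemize}
\item proper colorings of $G\setminus e$ with $\kappa(u)=\kappa(v)$, and
\item proper colorings of $(G/e)\sqcup\{v'\}$ with $\kappa(v')=\kappa(v_e)$.
\end{itemize}
The map sends $\kappa(u)=\kappa(v)=c$ to $\kappa(v_e)=\kappa(v')=c$ and keeps all other colors. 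Both sides have exactly two vertices of color $c$ among the pair, so the monomials agree. Properness matches because the neighbors of $v_e$ in $G/e$ are the union of the neighbors of $u$ and $v$ other than each other, and $v'$ is isolated in $(G\odot e)\setminus\ell_e$.

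Subtracting the two relations gives
$$\chi_{G\setminus e}-\chi_{(G\odot e)\setminus\ell_e}=\chi_G-\chi_{G\odot e},$$
which rearranges to \eqref{eq: del/contract sym}.

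The main obstacle I expect is bookkeeping in $G/e$: common neighbors of $u$ and $v$ produce parallel edges. I would take $G/e$ with parallel edges merged, which does not change the set of proper colorings. I would also check the degenerate case where $e$ is not internal, so that $G\odot e$ coincides with $G$ up to relabeling the vertices $v_e$ and $v'$. There the identity should reduce to a tautology, and the bijection above covers it without a separate argument.
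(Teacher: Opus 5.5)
Your proof is correct and uses the same mechanism as the paper. The paper only quotes this statement from \cite{alisteprieto2022markedgraphschromaticsymmetric}, but its proof of the $q$-analogue, Theorem~\ref{thm: Acyclic Del/nearcontraction}, proceeds exactly as you do. It isolates the colorings of $G\setminus e$ that are monochromatic on $e$, does the same for $\ell_e$ in $G\odot e$, and matches the two families via $\kappa(u)=\kappa(v)=c\mapsto\kappa'(v_e)=\kappa'(v')=c$, so the common term cancels. That term is your $D(G,e)$: the chromatic symmetric function of $G/e$ with $v_e$ contributing $x_c^2$.

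One thing to fix is your opening display $\chi_{G\setminus e}=\chi_G+\chi_{G/e}$, which is false as written. Here $\chi_{G\setminus e}$ and $\chi_G$ are homogeneous of degree $|V|$, while $\chi_{G/e}$ has degree $|V|-1$. You correct this yourself a paragraph later, so simply state $\chi_{G\setminus e}=\chi_G+D(G,e)$ from the outset.
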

As mentioned in~\cite{alisteprieto2022markedgraphschromaticsymmetric}, if $e$ is an internal edge, then the graphs represented in the RHS of~\eqref{eq: del/contract sym} all have one fewer edge than $G$. Thus, applying this relation to these three graphs, we can compute the chromatic symmetric function of $G$ as the sum of chromatic symmetric functions of graphs with 2 fewer internal edges than $G$. Since forests of star graphs are the only graphs with no internal edges, eventually this process leads to writing the chromatic symmetric function of any graph $G$ as the sum of products of the chromatic symmetric function of star graphs.

\bigskip

In 2014, Shareshian and Wachs~\cite{shareshian2016chromaticquasisymmetricfunctions} introduced a refinement of the CSF of a graph within the framework of quasisymmetric functions. 
\begin{defn}
Let $G$ be a graph with a labeling $L$. The chromatic quasisymmetric function (CQF) of $G$ is defined as
$$
\chi^L_G(x;q) := \sum_{\kappa \in \mathcal{K}(G)} q^{\asc^L(\kappa)}x^\kappa.
$$
\end{defn}
Their motivation came from the connection of the CQF of graphs and to the cohomology of the Hessenberg varieties.
As stated in the introduction, it is important to note that the analog of Stanley-Stembridge conjecture, known as \emph{Shareshian-Wachs conjecture} and stated in Conjecture~\ref{conj: SW-conjecture}, remains open. 

Now, the ascent statistic is also defined when we consider a graph with an orientation instead of a graph with a labeling (see Section~\ref{subsec: graphs}). Therefore, we also consider the following variant of the CQF of a graph. 
\begin{defn}
    Let $G=(V,E)$ be a simple graph with an orientation $\gamma$. The chromatic quasisymmetric function of $G$ is defined to be 
    $$\chi_G^\gamma (x;q) := \sum_{\kappa}q^{\asc^\gamma(\kappa)}x^\kappa.$$
\end{defn}
\begin{remark}
To simplify the notation, we drop the alphabet from the symmetric and quasisymmetric functions. For instance, $\chi_G$ denotes the CSF of the graph $G$ and $\chi_G(q)$ denotes the CQF of the graph $G$. 
\end{remark}

\section{The total chromatic quasisymmetric function of a graph}\label{sec: TCQF by labelings and AO}

As mentioned in the introduction, one of the key aspects of the CQF of a graph is its reliance either on the labeling of the vertices or on the acyclic orientation, as in both cases, it directly affects the number of ascents, and consequently, the coefficient of $x^\alpha$ (which is a polynomial in $\mathbb{Q}[q]$).

In this section, we introduce two new variants of the CSF of a graph and study their properties in general. In Section~\ref{subsec: labelings}, we define and study the variant with respect to the labelings, while in Section~\ref{subsec: acyclic orientations}, we do it for the variant with respect to the acyclic orientations. In Section~\ref{subsec: comparison}, we compare both variants. 

\subsection{Via labelings}\label{subsec: labelings}

We start by defining the variant with respect to the labeling.
\begin{defn} Let $G=(V,E)$ be a graph, and $\mathcal{L}$ and $\mathcal{K}$ be the sets of vertex labelings and proper colorings of $G$, respectively. We define the \demph{total labeling chromatic quasisymmetric function} of $G$ to be
$$\Tchi_G^{\mathcal{L}}(q) := \sum_{L \in \mathcal{L}} \chi_G^L(q) =  \sum_{L \in \mathcal{L}} \sum_{\kappa \in \mathcal{K}} q^{\asc^L(\kappa)}x^\kappa.$$
\end{defn}

\begin{example}\label{ex: C4 total labeling}
Let $G$ be the cycle with 4 vertices, and write $\Tchi_G^{\mathcal{L}}(q) = \sum_\alpha c^{\mathcal{L}}_\alpha(q)M_\alpha$. Then, we have the following coefficients $c^{\mathcal{L}}_\alpha(q)$:
$$
\begin{tikzpicture}
    \node[shape=circle,draw=black] (1) at (-1,0) {};
    \node[shape=circle,draw=black] (3) at (1,0) {};
    \node[shape=circle,draw=black] (4) at (-1,-2) {};
    \node[shape=circle,draw=black] (5) at (1,-2) {};
    \node at (0,-2.5) {$G$};
    \path [-] (3) edge  (1);
    \path [-](5) edge (3);
    \path [-](5) edge (4);
    \path [-](1) edge (4); 
    \node[anchor=north west] at (3,0.5) {
$\begin{array}{ |c|c| } 
 \hline
 \alpha & c^{\mathcal{L}}_\alpha(q)\\
 \hline
(1,1,1,1) & 54q^4 + 128q^3 + 208q^2 + 128q +54 \\
\hline
(1,1,2) & 16q^4 + 16q^3 + 32q^2 + 16q +16 \\
\hline
(1,2,1) & 8q^4 + 16q^3 + 48q^2 + 16q + 8 \\
 \hline
 (2,1,1) & 16q^4 + 16q^3 + 32q^2 + 16q +16 \\
 \hline
 (2,2) & 8q^4 + 8q^3 + 16q^2 + 8q + 8 \\
 \hline 
\end{array}$};
\end{tikzpicture} 
$$

\end{example}

We start by noticing that the total labeling CQF of graphs behaves well for unions of graphs. 
\begin{prop}\label{prop: Labeling Disjoint Union}
Let $G=(V,E)$ and $G'=(V',E')$ be two graphs, and consider their disjoint union $H = G\sqcup G'$. Then, $$\Tchi_H^L(q)  = \binom{|V| + |V'|}{|V|}\Tchi_G^{\mathcal{L}}(q)\Tchi_{G'}^{\mathcal{L}}(q).$$
\end{prop}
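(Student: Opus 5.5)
The plan is to reduce the statement to a bijection between labelings of $H$ and triples (subset of labels for $G$, labeling of $G$, labeling of $G'$), and then factor the inner sum over colorings.

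Write $n=|V|$ and $n'=|V'|$. A labeling $L$ of $H$ is a bijection $V\sqcup V'\to\{1,\ldots,n+n'\}$, and it determines three pieces of data:
\begin{itemize}
\item the set $S=L(V)$, which is an $n$-subset of $\{1,\ldots,n+n'\}$;
\item the labeling $L_G\in\mathcal{L}(G)$ obtained by composing $L|_V$ with the unique order-preserving bijection $S\to\{1,\ldots,n\}$;
\item the labeling $L_{G'}\in\mathcal{L}(G')$ obtained in the same way from $L|_{V'}$ and the complement of $S$.
\end{itemize}
Conversely, a triple $(S,L_G,L_{G'})$ determines $L$ uniquely. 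So $L\mapsto(S,L_G,L_{G'})$ is a bijection between $\mathcal{L}(H)$ and the product of the set of $n$-subsets of $\{1,\ldots,n+n'\}$ with $\mathcal{L}(G)\times\mathcal{L}(G')$. This gives the count $(n+n')!=\binom{n+n'}{n}n!\,n'!$, as it should.

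The key observation is that ascents depend only on relative order. Each edge of $H$ lies entirely in $G$ or entirely in $G'$. For an edge $\{u,v\}$ of $G$, the comparison $L(u)<L(v)$ holds if and only if $L_G(u)<L_G(v)$, since the relabeling is order-preserving. Proper colorings of $H$ are exactly pairs $(\kappa,\kappa')\in\mathcal{K}(G)\times\mathcal{K}(G')$, and for such a pair
$$\asc^L(\kappa\sqcup\kappa')=\asc^{L_G}(\kappa)+\asc^{L_{G'}}(\kappa'),\qquad x^{\kappa\sqcup\kappa'}=x^{\kappa}x^{\kappa'}.$$
Hence $\chi_H^L(q)=\chi_G^{L_G}(q)\,\chi_{G'}^{L_{G'}}(q)$, which is the labeled analogue of Stanley's product formula for disjoint unions.

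Summing over $L$ through the bijection, the summand does not depend on $S$, so the choice of $S$ contributes exactly the factor $\binom{n+n'}{n}$. The remaining double sum over $\mathcal{L}(G)\times\mathcal{L}(G')$ factors as $\Tchi_G^{\mathcal{L}}(q)\,\Tchi_{G'}^{\mathcal{L}}(q)$, which is the claimed identity. (The left-hand side of the statement, written $\Tchi_H^L(q)$, should be read as $\Tchi_H^{\mathcal{L}}(q)$.)

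There is no real obstacle here. The only point needing care is the standardization step: one must check that the map $L\mapsto(S,L_G,L_{G'})$ is a bijection and that it preserves the ascent statistic edge by edge, both of which were verified above.
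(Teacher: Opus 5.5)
Your proposal is correct and is essentially the paper's argument. The paper's one-line proof is exactly your observation that, for fixed labelings of $G$ and $G'$, there are $\binom{|V|+|V'|}{|V|}$ labelings of $H$ whose restrictions have the same relative orders; you additionally make explicit the factorization $\chi_H^L(q)=\chi_G^{L_G}(q)\,\chi_{G'}^{L_{G'}}(q)$, which holds because ascents depend only on relative order within each component, and which the paper leaves implicit.
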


\begin{proof}
The results follows from noticing that, given a labeling $L$ of $G$ and a labeling $L'$ of $G'$, there are $\binom{|V|+|V'|}{|V|}$ ways of labeling $H$ such that the relative order of vertex labels in the restriction of $H$ to $G$ and $G'$ match the labelings of $G$ and $G'$, respectively. 
\end{proof}

Our study of the total labeling CQS of a graph focuses on its expansion in the monomial quasisymmetric basis. That is, given a graph $G$, we study the coefficients $c^{\mathcal{L}}_\alpha(q)$ appearing in 
$$\Tchi_G^{\mathcal{L}}(q) =  \sum_{\alpha}c^{\mathcal{L}}_\alpha(q)M_\alpha,$$
where the sum runs over all compositions $\alpha$ of $n = |V|$. We refer to it as the \demph{$M$-expansion of the total labeling CQF of $G$}.

We start by noticing that the $q$-coefficients are symmetric. 
\begin{prop} \label{prop: palindrom Label}
    Let $G=(V,E)$ be a graph on $n=|V|$ vertices,  and consider the $M$-expansion of its total labeling CQF, $\Tchi_G^{\mathcal{L}}(q) =  \sum_{\alpha}c^{\mathcal{L}}_\alpha(q)M_\alpha$. Then,  $c^{\mathcal{L}}_\alpha(q)$ is a palindromic polynomial in $q$ with degree at most $|E|$, i.e., for all $k$, $$[q^k]c^{\mathcal{L}}_\alpha(q) = [q^{|E|-k}]c^{\mathcal{L}}_\alpha(q).$$
\end{prop}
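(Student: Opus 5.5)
Since $\Tchi_G^{\mathcal{L}}(q)$ is quasisymmetric, the coefficient $c^{\mathcal{L}}_\alpha(q)$ equals the coefficient of $x_1^{\alpha_1}x_2^{\alpha_2}\cdots x_\ell^{\alpha_\ell}$. Equivalently,
$$c^{\mathcal{L}}_\alpha(q)=\sum_{L\in\mathcal{L}}\ \sum_{\kappa} q^{\asc^L(\kappa)},$$
where $\kappa$ ranges over the proper colorings of $G$ using exactly $\alpha_i$ vertices of color $i$ for $i=1,\ldots,\ell$. This rewrites the coefficient as a generating function over pairs $(L,\kappa)$ in a fixed finite set $S_\alpha$.

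The degree bound is immediate, since $\asc^L(\kappa)$ counts a subset of the edges. To prove palindromicity, I will construct an involution $\phi$ on $S_\alpha$ with $\asc(\phi(L,\kappa)) = |E| - \asc(L,\kappa)$. The natural choice is to reverse the labeling, $\bar L(v) = n+1-L(v)$, and keep $\kappa$ fixed. Since $\kappa$ is proper, every edge $\{u,v\}$ has $\kappa(u)\neq\kappa(v)$. Write the edge so that $L(u)<L(v)$. Under $L$ it is an ascent exactly when $\kappa(u)<\kappa(v)$. Under $\bar L$ the order flips, $\bar L(v)<\bar L(u)$, so it is an ascent exactly when $\kappa(v)<\kappa(u)$. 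These two conditions are complementary because the colors are distinct, so each edge is an ascent for exactly one of $L$ and $\bar L$. Hence $\asc^{\bar L}(\kappa)=|E|-\asc^L(\kappa)$.

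The map $L\mapsto\bar L$ is a bijection (indeed an involution) of $\mathcal{L}$, and it leaves the color multiset and the composition $\alpha$ unchanged. Summing over $S_\alpha$ therefore gives $c^{\mathcal{L}}_\alpha(q)=q^{|E|}c^{\mathcal{L}}_\alpha(q^{-1})$, which is exactly the claimed coefficient equality.

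There is essentially no obstacle. The only point needing care is the properness of $\kappa$, which is what makes "ascent" and "descent" complementary on each edge. The rest is bookkeeping.
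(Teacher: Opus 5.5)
Your proof is correct and is the same argument as the paper's: fix the proper coloring $\kappa$ and complement the labeling, so that each edge switches between ascent and non-ascent and $\asc$ becomes $|E|-\asc$. Your formula $\bar L(v)=n+1-L(v)$ is the right one; the paper writes $n-i$, which is off by one and does not give a bijection onto $\{1,\ldots,n\}$.
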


\begin{proof}
 The bound for the degree of $c^{\mathcal{L}}_\alpha(q)$ follows from the definition of ascents and the fact that there cannot be more ascents than edges in the graph. 

 Now, fix $\kappa \in \mathcal{K}$ and $0\leq k \leq |E|$. Let $\mathcal{L}_k$ be the set of labelings $L\in \mathcal{L}$ such that $\asc^L(\kappa)=k$. We define the map $\phi_k : \mathcal{L}_k \longrightarrow \mathcal{L}_{|E|-k}$ as follows: Given a labeling $L\in \mathcal{L}_k$ and a vertex $v\in V$ with $L(v) =i$, for $1\leq i \leq n$, we define $\phi_k(L)$ as the labeling $L'$ such that $L'(v) = n-i$. Then, $\phi_k$ is well-defined and satisfies the following: an edge $e=\{u,v\}\in E$ contributes to $\asc^L(\kappa)$ if and only if the same edge $e=\{u,v\}\in E$ does not contribute to $\asc^{L'}(\kappa)$ with $L'=\phi_k(L)$. That is, given a proper coloring $\kappa$, the number of labelings of $G$ that yield $k$ ascents is the same as the number of labelings of $G$ that yield $|E|-k$ ascents. Thus, the result follows.
\end{proof}

We also notice that the $q$-coefficients are invariant under reversing the composition. 
\begin{thm}\label{thm: reverse Labeling}
    Let $G=(V,E)$ be a graph on $n=|V|$ vertices and consider the $M$-expansion of its total labeling CQF, $\Tchi_G^{\mathcal{L}}(q) = \sum_{\alpha}c^{\mathcal{L}}_\alpha(q) M_\alpha$. Then, for any composition $\alpha$ of $n$, $c^{\mathcal{L}}_\alpha(q) = c^{\mathcal{L}}_{\alpha^{\rev}}(q)$.
\end{thm}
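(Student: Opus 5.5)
The plan is to reduce the statement to an identity at the level of individual colorings and labelings, using a color-reversal bijection on colorings combined with the labeling-reversal map $\phi$ from the proof of Proposition~\ref{prop: palindrom Label}.

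\textbf{Coefficient extraction.} Since $\Tchi_G^{\mathcal{L}}(q)$ is a sum of quasisymmetric functions $\chi_G^L(q)$, it is quasisymmetric. Hence $c^{\mathcal{L}}_\alpha(q)$ equals the coefficient of $x_1^{\alpha_1}\cdots x_\ell^{\alpha_\ell}$, with $\ell=\ell(\alpha)$. Concretely,
$$c^{\mathcal{L}}_\alpha(q)=\sum_{L\in\mathcal{L}}\ \sum_{\kappa\in\mathcal{K}_\alpha} q^{\asc^L(\kappa)},$$
where $\mathcal{K}_\alpha$ is the set of proper colorings using exactly colors $1,\ldots,\ell$, with $\#\kappa^{-1}(i)=\alpha_i$.

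\textbf{Pairing colorings and labelings.} Define $\bar\kappa(v)=\ell+1-\kappa(v)$. This is a bijection $\mathcal{K}_\alpha\to\mathcal{K}_{\alpha^{\rev}}$, since properness is preserved and the class sizes are reversed. Next, take the labeling reversal $\bar L(v)=n+1-L(v)$, a bijection of $\mathcal{L}$; this is the map $\phi$ from the proof of Proposition~\ref{prop: palindrom Label}, written with $n+1$ so that it lands in $\{1,\ldots,n\}$.

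For an edge $\{u,v\}$ with $L(u)<L(v)$, we have $\bar L(v)<\bar L(u)$ and $\bar\kappa(v)<\bar\kappa(u)$ if and only if $\kappa(u)<\kappa(v)$. So $\{u,v\}$ is an ascent for $(L,\kappa)$ if and only if it is an ascent for $(\bar L,\bar\kappa)$. Therefore
$$\asc^{\bar L}(\bar\kappa)=\asc^L(\kappa).$$

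\textbf{Conclusion.} The map $(L,\kappa)\mapsto(\bar L,\bar\kappa)$ is a bijection $\mathcal{L}\times\mathcal{K}_\alpha\to\mathcal{L}\times\mathcal{K}_{\alpha^{\rev}}$ that preserves the ascent statistic. Summing $q^{\asc}$ over both sides gives $c^{\mathcal{L}}_\alpha(q)=c^{\mathcal{L}}_{\alpha^{\rev}}(q)$.

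\textbf{Main point to check.} The one step needing care is the first: restricting to colorings with support exactly $\{1,\ldots,\ell\}$ must correctly extract the $M_\alpha$ coefficient. This holds because $M_\alpha$ is the only monomial quasisymmetric function containing the monomial $x_1^{\alpha_1}\cdots x_\ell^{\alpha_\ell}$. The rest is a direct check. Note that neither reversal alone suffices: reversing only colors turns ascents into descents, and that mismatch is exactly what reversing the labeling fixes. Summing over all labelings is what makes the symmetry available, which is why the statement can fail for a single fixed labeling, as Table~\ref{tab: example ST_5} illustrates.
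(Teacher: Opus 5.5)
Your proposal is correct and is essentially the paper's proof. The paper uses the same simultaneous reversal $(\kappa,L)\mapsto(\kappa^r,L^r)$, with $\kappa^r(v)=\ell(\alpha)-\kappa(v)+1$ and $L^r(v)=n-L(v)+1$, and checks edge by edge that ascents are preserved; your explicit justification that restricting to colorings supported on $\{1,\ldots,\ell\}$ extracts the $M_\alpha$ coefficient is extra care that the paper's proof passes over.
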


\begin{proof}
Given $k$ and $\alpha$, let $A_{\alpha}^k$ be the set of pairs $(\kappa, L) \in \mathcal{K}\times \mathcal{L}$ such that $\text{sort}(\kappa)=\alpha$ and $\asc^L(\kappa)=k$. Note that $[q^k]c^{\mathcal{L}}_\alpha(q) = |A_{\alpha}^k|$.

We define a map $\phi: A_\alpha^k \to A_{\alpha^{\rev}}^{k}$ in the following way:
 Given $(\kappa,L)\in A_\alpha^k$, $\phi(\kappa,L) := (\kappa^r,L^r)$, where $\kappa^r(v) = \ell(\alpha)- \kappa(v) + 1$ and $L^r(v) = n - L(v) + 1$.  It is clear by definition that $\phi$ is a bijection. 
 
Consider an edge $e=\{u,v\} \in E$ that contributes to the ascents of $\kappa$ with labeling $L$. Assume without loss of generality that $L(u) < L(v)$ and $\kappa(u) < \kappa(v)$. Then, $L^r(u) > L^r(v)$ and $\kappa^r(u) > \kappa^r(v)$, and so $\{u,v\}$ contributes to the ascents of $\kappa^r$ with labeling $L^r$. Similarly, if $e=\{u,v\} \in E$ does not contribute to the ascents of $\kappa$ with labeling $L$, $e=\{u,v\}$ does not contribute to the ascents of $\kappa^r$ with labeling $L^r$. Thus, $\phi$ is a bijection that preserves the number of ascents, and so the result follows.
\end{proof}

\subsection{Via acyclic orientations}\label{subsec: acyclic orientations}

While the total labeling CQF of a graph seems natural to consider, its computation is fairly complex. Here, we consider another variant in terms of acyclic orientations since the set of acyclic orientations of a graph is smaller than the set of labelings. 

Recall that we can also consider the statistic of ascents for graphs with orientations, rather than labelings. 
Moreover, given a simple graph $G=(V,E)$, a labeling $L$ of its vertices induces an acyclic orientation $\gamma_L$ on $G$ by assigning the edge $\{i,j\}\in E$ the orientation $(i,j)$ if $L(i)<L(j)$, and $(j,i)$ otherwise. Now, several labelings may yield the same acyclic orientation since $\gamma_L$ only depends on the relative order of the adjacent vertices. Therefore, $\chi_G^L(q) = \chi_G^{\gamma_L}(q)$. Also, given an orientation $\gamma$, $\chi_G^\gamma(q) = \chi_G^L(q)$ for any labeling $L$ such that $\gamma_L=\gamma$.

Thus, we can say that while we can define the CQF of a graph $G$ under any orientation, the acyclic orientations are the ones that reflect the original idea of the CQF as introduced by Shareshian and Wachs. This leads us to the following definition.

\begin{defn}
    Let $G$ be a graph, and let $\Gamma$ be the set of acyclic orientations of $G$. Define the \demph{total orientation chromatic quasisymmetric function} of $G$ as 
    $$\Tchi^{\Gamma}_G(q) := \sum_{\gamma \in \Gamma} \chi_G^\gamma(q).$$
\end{defn}

\begin{example}\label{ex: C4 total orientation}
    Let $G$ be the cycle with 4 vertices, and write $\Tchi_G^{\Gamma}(q) = \sum_\alpha c^{\Gamma}_\alpha(q)M_\alpha$. Then, the coefficients $c^{\Gamma}_\alpha(q)$ are:
$$
\begin{tikzpicture}
    \node[shape=circle,draw=black] (1) at (-1,0) {};
    \node[shape=circle,draw=black] (3) at (1,0) {};
    \node[shape=circle,draw=black] (4) at (-1,-2) {};
    \node[shape=circle,draw=black] (5) at (1,-2) {};
    \node at (0,-2.5) {$G$};
    \path [-] (3) edge  (1);
    \path [-](5) edge (3);
    \path [-](5) edge (4);
    \path [-](1) edge (4); 
\node[anchor=north west] at (3,0.5) {
$\begin{array}{ |c|c| } 
 \hline
\alpha & c^{\Gamma}_\alpha(q)\\
 \hline
(1,1,1,1) & 24q^4 + 88q^3 + 112q^2 + 88q^3 + 24q^4 \\
\hline
(1,1,2) & 4q^4 + 16q^3 + 16q^2 + 16q + 4\\
\hline
(1,2,1) & 4q^4 + 16q^3 + 16q^2 + 16q + 4\\
 \hline
(2,1,1) & 4q^4 + 16q^3 + 16q^2 + 16q + 4\\
 \hline
(2,2) & 2q^4 + 8q^3 + 8q^2 + 8q + 2 \\
 \hline 
 \end{array}$};
\end{tikzpicture}
$$

If we compare this to Example~\ref{ex: C4 total labeling}, we can see that $\Tchi_{C_4}^o(q)$ is symmetric, while $\Tchi_{C_4}^L(q)$ is not. Furthermore, each $c^{\Gamma}_\alpha(q)$ is a log-concave polynomial, while that is not true for $c^{\mathcal{L}}_\alpha(q)$ for $\alpha = (1,1,2),\ (1,2,1),\ (2,1,1),$ and $(2,2)$.

\end{example}

We start by noticing that the total orientation CQF of graphs also behaves well for disjoint unions of graphs. 
\begin{prop}\label{prop: Acyclic Disjoint Union}
    Let $G$ and $G'$ be two graphs and consider $H = G \sqcup G'$. Then, $$\Tchi^{\Gamma}_H (q)= \Tchi^{\Gamma}_G (q)\cdot \Tchi^{\Gamma}_{G'}(q).$$
\end{prop}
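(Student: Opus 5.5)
The plan is to show that every ingredient of the definition factors over the two components. First, the acyclic orientations split: since $H = G\sqcup G'$ has no edges between $V$ and $V'$, an orientation $\gamma$ of $H$ is exactly a pair $(\gamma_G,\gamma_{G'})$ of orientations of $G$ and $G'$. Any directed cycle in $H$ is a cycle in a connected subgraph, so it lies entirely inside $G$ or entirely inside $G'$. Hence $\gamma$ is acyclic if and only if both restrictions are acyclic, which gives a bijection $\Gamma(H)\cong\Gamma(G)\times\Gamma(G')$.

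Second, the proper colorings split: a map $\kappa:V\sqcup V'\to\mathbb{Z}_{>0}$ is proper for $H$ if and only if its restrictions $\kappa|_V$ and $\kappa|_{V'}$ are proper for $G$ and $G'$, because $E(H)=E\sqcup E'$. No compatibility between colors on the two sides is required; the two restrictions may share colors freely. This gives $\mathcal{K}(H)\cong\mathcal{K}(G)\times\mathcal{K}(G')$, and the monomials multiply:
$$x^\kappa = x^{\kappa|_V}\,x^{\kappa|_{V'}}.$$

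Third, the ascent statistic is additive. For $\gamma=(\gamma_G,\gamma_{G'})$ we have
$$\asc^\gamma(\kappa)=\asc^{\gamma_G}(\kappa|_V)+\asc^{\gamma_{G'}}(\kappa|_{V'}),$$
since each edge of $H$ is an edge of exactly one component and whether it is an ascent depends only on its own orientation and the colors of its endpoints. Therefore, for each fixed $\gamma$, summing over pairs of colorings factors as
$$\chi_H^\gamma(q)=\chi_G^{\gamma_G}(q)\,\chi_{G'}^{\gamma_{G'}}(q).$$
This is the quasisymmetric analogue of Stanley's product rule.

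Finally, summing over $\gamma\in\Gamma(H)$ through the bijection with $\Gamma(G)\times\Gamma(G')$ turns the sum of products into the product of the sums $\Tchi^{\Gamma}_G(q)\cdot\Tchi^{\Gamma}_{G'}(q)$. There is no binomial factor here, unlike Proposition~\ref{prop: Labeling Disjoint Union}, because orientations carry no relative-order information between the components.

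No step is a real obstacle. The only point needing care is the acyclicity claim in the first step, namely that a directed cycle cannot use vertices from both components; this holds because no edge joins $V$ and $V'$.
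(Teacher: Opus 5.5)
Your proof is correct and follows the same route as the paper: the bijection $\Gamma(H)\cong\Gamma(G)\times\Gamma(G')$ together with the factorization $\chi_H^{\gamma\sqcup\gamma'}(q)=\chi_G^{\gamma}(q)\,\chi_{G'}^{\gamma'}(q)$, followed by summing over pairs. You also spell out the checks that the paper uses without stating them: acyclicity of each restriction, the splitting of proper colorings, and the additivity of ascents.
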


\begin{proof}
The result follows from noticing that there is a natural bijection between pairs of acyclic orientations of $G$ and $G'$ and acyclic orientations of $H$ that maps $(\gamma,\gamma') \longmapsto \gamma\sqcup\gamma'$. Therefore,
\begin{align*}
        \Tchi^{\Gamma}_G(q)\cdot \Tchi^{\Gamma}_{G'}(q) &= \left(\sum_{\gamma}\chi_G^\gamma(q)\right)\left(\sum_{\gamma'}\chi_{G'}^{\gamma'}(q)\right) = \sum_{(\gamma,\gamma')}\chi_G^\gamma(q)\cdot \chi_{G'}^{\gamma'}(q) =
        \sum_{\gamma,\gamma'}\chi_H^{\gamma\sqcup \gamma'}(q)= \Tchi^{\Gamma}_H(q).
    \end{align*}
\end{proof}

As for the labeling variant, our study of the total orientation CQS of a graph focuses on its expansion in the monomial quasisymmetric basis. That is, given a graph $G$, we study the coefficients $c^{\Gamma}_\alpha(q)$ appearing in 
$$\Tchi_G^{\Gamma}(q) =  \sum_{\alpha}c^{\Gamma}_\alpha(q)M_\alpha,$$
where the sum runs over all compositions $\alpha$ of $n = |V|$. We refer to it as the \demph{$M$-expansion of the total orientation CQF of $G$}.

\begin{prop}\label{prop: CSF coeffs in orientation total}
Let $G=(V,E)$ be a graph and consider the $M$-expansion of its total orientation CQF, $\Tchi_G^{\Gamma}(q) = \sum_{\alpha}c^{\Gamma}_\alpha(q) M_\alpha$. Consider also the $m$-expansion of the CSF of $G$,  $\chi_G = \sum_{\lambda}c_\lambda m_\lambda$. Then, $$[q^{|E|}]c^{\Gamma}_\alpha(q) = c_{sort(\alpha)}.$$
\end{prop}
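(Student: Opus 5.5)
We identify the coefficient of $q^{|E|}$ combinatorially. Fix a composition $\alpha\models n$ with $\ell(\alpha)=\ell$. Since $M_\alpha$ contains the monomial $x_1^{\alpha_1}\cdots x_\ell^{\alpha_\ell}$ exactly once, we have $c^{\Gamma}_\alpha(q)=\sum_{\gamma\in\Gamma}\sum_{\kappa}q^{\asc^\gamma(\kappa)}$. Here $\kappa$ ranges over proper colorings with $\#\kappa^{-1}(i)=\alpha_i$ for $1\le i\le \ell$ and no other colors used. Thus $[q^{|E|}]c^{\Gamma}_\alpha(q)$ counts pairs $(\gamma,\kappa)$, with $\gamma$ acyclic and $\kappa$ of this type, in which \emph{every} edge is an ascent. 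That is, for every edge oriented $u\to v$ by $\gamma$ we have $\kappa(u)<\kappa(v)$.

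The key step is to show that, for each such proper coloring $\kappa$, there is exactly one acyclic orientation $\gamma$ for which all edges are ascents. We define $\gamma_\kappa$ by orienting each edge $\{u,v\}$ from the smaller color to the larger one. This is possible because $\kappa$ is proper, so $\kappa(u)\ne\kappa(v)$. The orientation $\gamma_\kappa$ is acyclic, since along any directed path colors strictly increase, so no directed cycle can close up. By construction every edge is an ascent. Conversely, if $\gamma$ makes every edge an ascent, then each edge must point from the smaller color to the larger, which forces $\gamma=\gamma_\kappa$. Hence $[q^{|E|}]c^{\Gamma}_\alpha(q)$ equals the number of proper colorings $\kappa$ with color-class sizes $(\alpha_1,\dots,\alpha_\ell)$ on colors $1,\dots,\ell$. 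This is exactly $[x_1^{\alpha_1}\cdots x_\ell^{\alpha_\ell}]\chi_G$.

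Finally, we relate this to the $m$-expansion. Since $\chi_G$ is symmetric, the coefficient of $x_1^{\alpha_1}\cdots x_\ell^{\alpha_\ell}$ equals that of $x^{\mathrm{sort}(\alpha)}$. Because $m_\lambda$ contains each of its monomials once and the $m_\lambda$ have disjoint supports, this coefficient is $c_{\mathrm{sort}(\alpha)}$, which completes the argument.

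The only step needing care is the uniqueness of the orientation. It is immediate but is the crux: one must note that restricting to acyclic orientations loses nothing, since the forced orientation is automatically acyclic. There is also an edge case when $E=\emptyset$: then $\Gamma$ consists of the single empty orientation and the statement reduces to $c^\Gamma_\alpha=c_{\mathrm{sort}(\alpha)}$, which the same argument covers.
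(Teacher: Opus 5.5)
Your proof is correct and follows the paper's argument: both count pairs $(\kappa,\gamma)$ in which every edge is an ascent, show that the color-increasing orientation $\gamma_\kappa$ is the unique such orientation and is automatically acyclic, and then identify the resulting number of colorings with $c_{\mathrm{sort}(\alpha)}$. Your explicit extraction of the coefficient via the monomial $x_1^{\alpha_1}\cdots x_\ell^{\alpha_\ell}$ and the symmetry of $\chi_G$ simply spells out a step the paper states without comment.
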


\begin{proof}
Recall that $c_{sort(\alpha)}$ counts the number of proper colorings of $G$ with associated composition $\alpha$, and that $[q^{|E|}]c_\alpha(q)$ is the number of pairs $(\kappa,\gamma)$, where $\kappa$ is a proper coloring of $G$ with associated composition $\alpha$ and $\gamma$ is an acyclic orientation of $G$, such that the pair yields $|E|$ ascents.

We first notice that, given a coloring $\kappa$, the orientation $\gamma_\kappa$ defined by orienting the edge $\{u,v\}$ as $(u,v)$ whenever $\kappa(u)<\kappa(v)$ yields to exactly $|E|$ ascents. Moreover, we claim this orientation is acyclic. Suppose by contradiction that there exists a cycle $v_1v_2\ldots v_\ell v_1$ with $(v_i,v_{i+1})\in E$ for $1\leq i <\ell$ and $(v_\ell, v_1)\in E$. Then, looking at the proper coloring $\kappa$, we have that by definition of the orientation $\gamma_\kappa$, $\kappa(v_1)<\kappa(v_2)< \cdots < \kappa(v_\ell) < \kappa(v_1)$, which is a contradiction and our claim is proven. 
We also notice that any other orientation $\gamma\neq \gamma_\kappa$ has at least one edge that does not yield an ascent, and so it does not contribute to $[q^{|E|}]c^{\Gamma}_\alpha(q)$. 

Thus, for each proper coloring $\kappa$, there is a unique acyclic orientation $\gamma_\kappa$ that contributes to the coefficient $[q^{|E|}]c^{\Gamma}_\alpha(q)$, meaning that $[q^{|E|}]c^{\Gamma}_\alpha(q) = c_{sort(\alpha)}$ as stated in the result. 
\end{proof}

We now notice that the $q$-coefficients are symmetric. 
\begin{thm}\label{thm: palindrom Acyclic}
Let $G=(V,E)$ be a graph, and consider the $M$-expansion of its total orientation CQF, 
$\Tchi_G^{\Gamma}(q) = \sum_{\alpha}c_\alpha^o(q)M_\alpha$. Then, $c_\alpha^o(q)$ is a palindromic polynomial in $q$ of degree $m=|E|$.
\end{thm}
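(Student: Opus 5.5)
Fix a composition $\alpha$ and a proper coloring $\kappa$ whose associated composition is $\alpha$. The coefficient $[q^k]c^o_\alpha(q)$ equals the number of pairs $(\kappa,\gamma)$ with $\kappa$ of type $\alpha$, $\gamma\in\Gamma$ acyclic, and $\asc^\gamma(\kappa)=k$. It therefore suffices to show that, for each fixed proper coloring $\kappa$, the number of acyclic orientations $\gamma$ with $\asc^\gamma(\kappa)=k$ equals the number with $\asc^\gamma(\kappa)=|E|-k$. Summing over $\kappa$ of type $\alpha$ then gives palindromicity with respect to degree $|E|$.

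The involution is reversal: send $\gamma$ to $\bar\gamma$, the orientation obtained by reversing every edge. I will check three things.
\begin{enumerate}
\item If $\gamma$ is acyclic, so is $\bar\gamma$, since a directed cycle in $\bar\gamma$ read backwards is a directed cycle in $\gamma$. Hence reversal is an involution on $\Gamma$.
\item Since $\kappa$ is proper, every edge $\{u,v\}$ has $\kappa(u)\neq\kappa(v)$. So exactly one of $(u,v)$, $(v,u)$ is an ascent, and each edge is an ascent in exactly one of $\gamma,\bar\gamma$.
\item Consequently $\asc^{\bar\gamma}(\kappa)=|E|-\asc^\gamma(\kappa)$, which gives the bijection between the two sets of orientations.
\end{enumerate}
This is the acyclic-orientation analogue of the map $\phi_k$ in Proposition~\ref{prop: palindrom Label}. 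It is actually cleaner here, because reversing a labeling corresponds exactly to reversing the induced orientation $\gamma_L$.

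It remains to show that the degree is exactly $|E|$, not merely at most $|E|$. The bound $\le |E|$ is immediate because ascents are counted among edges. For equality, I use Proposition~\ref{prop: CSF coeffs in orientation total}: $[q^{|E|}]c^o_\alpha(q)=c_{\mathrm{sort}(\alpha)}$, the number of proper colorings of type $\alpha$, which is positive whenever $c^o_\alpha\neq 0$. By palindromicity the constant term equals this same number; it is realized by the reversed orientation $\bar\gamma_\kappa$.

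None of these steps is a real obstacle. The only subtlety is the degree claim when $c^o_\alpha=0$, that is, when no proper coloring of type $\alpha$ exists. I would state the result for those $\alpha$ with $c^o_\alpha\neq 0$, or equivalently for $\alpha$ admitting a proper coloring, and note that otherwise the polynomial is identically zero.
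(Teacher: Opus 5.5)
Your proof is correct and follows the paper's argument. For each fixed proper coloring, reversing every edge is an involution on acyclic orientations that gives $[q^k]c^o_\alpha=[q^{|E|-k}]c^o_\alpha$, and Proposition~\ref{prop: CSF coeffs in orientation total} shows the degree is exactly $|E|$. Your explicit check that reversal preserves acyclicity, and your caveat that the degree claim fails for $\alpha$ admitting no proper coloring (where $c^o_\alpha\equiv 0$), are small refinements the paper leaves implicit.
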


\begin{proof}
    Let $k \leq |E|$ and consider a coloring $\kappa$ such that its associated composition is $\alpha$ and an orientation $\gamma$ such that $\asc^\gamma(\kappa)=k$. There is a natural involution between acyclic orientations yielding $k$ ascents and orientations yielding $|E| - k$ orientations resulting from reversing the direction of each edge. This means that $[q^k]c_\alpha^o(q) = [q^{|E|-k}]c_\alpha^o(q)$. Furthermore, we know by Proposition~\ref{prop: CSF coeffs in orientation total} that $c^{\Gamma}_\alpha(q)$ has indeed degree $|E|$.
\end{proof}

Next, we notice that, as for the total labeling CQF, the $q$-coefficients are invariant under reversing the composition.
\begin{thm}\label{thm: reverese Acyclic}
Let $G=(V,E)$ be a graph, and consider the $M$-expansion of its total orientation CQF,
$\Tchi_G^{\Gamma}(q) = \sum_{\alpha}c_\alpha^o(q)M_\alpha$. Then, $c_\alpha^o(q) = c_{\alpha^{\rev}}^o(q)$.
\end{thm}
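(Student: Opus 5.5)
The plan mirrors the proof of Theorem~\ref{thm: reverse Labeling}, with the labeling replaced by an acyclic orientation. Throughout, $\kappa$ ranges over proper colorings whose associated composition is $\alpha$.

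\textbf{Step 1: the sets to compare.} Fix $k$ and a composition $\alpha$ with $\ell=\ell(\alpha)$. Let $B_\alpha^k$ be the set of pairs $(\kappa,\gamma)\in\mathcal{K}\times\Gamma$ such that $\kappa$ uses exactly the colors $1,\ldots,\ell$, has $\#\kappa^{-1}(i)=\alpha_i$, and satisfies $\asc^\gamma(\kappa)=k$. By quasisymmetry, $[q^k]c^o_\alpha(q)$ is the coefficient of $q^k x_1^{\alpha_1}\cdots x_\ell^{\alpha_\ell}$ in $\Tchi^\Gamma_G(q)$, which is $|B_\alpha^k|$. So it suffices to give a bijection $B_\alpha^k\to B_{\alpha^{\rev}}^k$.

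\textbf{Step 2: the map.} Define
\[
\phi(\kappa,\gamma)=(\kappa^r,\bar\gamma),
\]
where $\kappa^r(v)=\ell-\kappa(v)+1$ and $\bar\gamma$ reverses every edge of $\gamma$.

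\textbf{Step 3: checks.}
\begin{itemize}
\item $\kappa^r$ is proper, since it is $\kappa$ composed with an injective relabeling of the colors.
\item Its color class $i$ is the color class $\ell-i+1$ of $\kappa$, so its weak composition is exactly $\alpha^{\rev}$.
\item $\bar\gamma$ is acyclic, because a directed cycle in $\bar\gamma$ read backwards would be a directed cycle in $\gamma$.
\item $\phi$ is an involution (applied with $\alpha^{\rev}$ in place of $\alpha$), hence a bijection.
\end{itemize}

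\textbf{Step 4: ascents are preserved.} Let $(u,v)$ be an edge of $\gamma$ with $\kappa(u)<\kappa(v)$. In $\bar\gamma$ it becomes $(v,u)$, and $\kappa^r(v)<\kappa^r(u)$, so it is still an ascent. The converse holds by the same argument, so ascent edges correspond exactly and $\asc^{\bar\gamma}(\kappa^r)=\asc^\gamma(\kappa)$.

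The only point needing care is that reversing both the colors and the orientation preserves acyclicity. Reversing the colors alone would turn ascents into descents, and this would only give palindromicity combined with reversal, not reversal alone. Beyond that, the argument is routine.
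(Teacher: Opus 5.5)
Your proof is correct and uses essentially the same ingredients as the paper. The paper reverses only the colors, obtaining $[q^k]c^o_\alpha(q)=[q^{|E|-k}]c^o_{\alpha^{\rev}}(q)$, and then invokes the palindromicity of Theorem~\ref{thm: palindrom Acyclic}, whose proof is precisely your edge-reversal involution; your single bijection is the composite of these two involutions, which makes your argument self-contained and an exact analogue of the paper's proof of Theorem~\ref{thm: reverse Labeling}.
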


\begin{proof}
Consider a proper coloring $\kappa$ of $G$ with associated composition $\alpha$ and an orientation $\gamma$ such that they yield $k$ ascents and define $\kappa^r$ as the coloring of $G$ obtained by setting $\kappa^r(v)=\ell(\alpha) - \kappa(v) + 1$,  for all $v\in V$. Then $\kappa^r$ has associated composition $\alpha^{\rev}$ and if $\kappa(v) < \kappa(v')$, then $\kappa^r(v) > \kappa^r(v')$. This means that $\kappa^r$ yields $|E| - k$ ascents. This gives us a natural involution between proper colorings of $G$ with associated composition $\alpha$ yielding $k$ ascents and proper colorings of $G$ with associated composition $\alpha^{\rev}$ yielding $|E| - k$ ascents. Thus,  $[q^k]c_\alpha^o(q) = [q^{|E|}]c_{\alpha^{\rev}}^o(q)$, and by Theorem~\ref{thm: palindrom Acyclic}, 
$$[q^k]c_\alpha^o(q) = [q^{|E|}]c_{\alpha^{\rev}}^o(q) = [q^k]c_{\alpha^{\rev}}^o(q).$$ 
\end{proof}

The following result presents a \emph{deletion-near-contraction} property for the total orientation CQF, for which we have not found an analog in the total labeling CQF case. 
\begin{thm}\label{thm: Acyclic Del/nearcontraction}
    Let $G$ be a graph and let $e$ be an internal edge in $G$ that is not part of a cycle. Then,
    $$
    \Tchi^{\Gamma}_G = \Tchi^{\Gamma}_{G\odot e} + (1+q)\left(\Tchi^{\Gamma}_{G\setminus e} - \Tchi^{\Gamma}_{(G\odot e)\setminus \ell_e }\right).
    $$
\end{thm}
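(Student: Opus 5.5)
The plan is to exploit that $e=\{u,v\}$ is a bridge, and that the pendant edge $\ell_e=\{v',v_e\}$ of $G\odot e$ is also a bridge. Both sides of the identity should then reduce to the same quantity, namely a weighted count of colorings in which the two endpoints of the bridge receive the \emph{same} color. Concretely, I will prove the two identities
$$\Tchi^{\Gamma}_G=(1+q)\bigl(\Tchi^{\Gamma}_{G\setminus e}-F\bigr),\qquad \Tchi^{\Gamma}_{G\odot e}=(1+q)\bigl(\Tchi^{\Gamma}_{(G\odot e)\setminus \ell_e}-F'\bigr),$$
where $F$ and $F'$ are the correction terms defined below. I will then show $F=F'$; subtracting the two identities gives the statement.

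\textbf{Step 1 (bridge lemma).} Let $H$ be any graph and $b=\{a,c\}$ a bridge of $H$.
\begin{itemize}
\item Since no cycle of $H$ passes through $b$, an orientation of $H$ is acyclic if and only if its restriction to $H\setminus b$ is acyclic. So $\Gamma(H)\cong\Gamma(H\setminus b)\times\{a\to c,\ c\to a\}$.
\item Fix $\gamma'\in\Gamma(H\setminus b)$ and a proper coloring $\kappa$ of $H$. Then $\kappa(a)\neq\kappa(c)$, so exactly one of the two orientations of $b$ is an ascent. Summing over both contributes $(1+q)\,q^{\asc^{\gamma'}(\kappa)}x^\kappa$.
\item Proper colorings of $H$ are exactly the proper colorings of $H\setminus b$ with $\kappa(a)\neq\kappa(c)$.
\end{itemize}
Together these give
$$\Tchi^{\Gamma}_H=(1+q)\Bigl(\Tchi^{\Gamma}_{H\setminus b}-F_{H,b}\Bigr),\qquad F_{H,b}:=\sum_{\gamma'\in\Gamma(H\setminus b)}\ \sum_{\substack{\kappa\in\mathcal K(H\setminus b)\\ \kappa(a)=\kappa(c)}} q^{\asc^{\gamma'}(\kappa)}x^\kappa .$$
I apply this with $(H,b)=(G,e)$, setting $F:=F_{G,e}$, and with $(H,b)=(G\odot e,\ell_e)$, setting $F':=F_{G\odot e,\ell_e}$.

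\textbf{Step 2 ($F=F'$).} The graph $(G\odot e)\setminus\ell_e$ is $G/e$ together with an isolated vertex $v'$. Colorings of it with $\kappa(v')=\kappa(v_e)$ are therefore proper colorings of $G/e$, with the monomial carrying an extra factor $x_{\kappa(v_e)}$. I will match these with the colorings of $G\setminus e$ having $\kappa(u)=\kappa(v)$ by merging $u$ and $v$ into $v_e$, and $v'$ with nothing. The checks needed are the following.
\begin{itemize}
\item Since $e$ lies on no cycle, $u$ and $v$ have no common neighbor. Hence $G/e$ is simple and its edges are in bijection with $E(G\setminus e)$.
\item A coloring of $G\setminus e$ with $\kappa(u)=\kappa(v)$ is proper exactly when the merged coloring of $G/e$ is proper, and the monomials agree because $v_e$ and $v'$ together account for $u$ and $v$.
\item The edge bijection transports orientations and preserves ascents edge by edge.
\item The transported orientation is acyclic on $G\setminus e$ exactly when it is acyclic on $G/e$.
\end{itemize}

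\textbf{Main obstacle.} The step I expect to need the most care is the last one, acyclicity, because contracting vertices can create new directed cycles. The argument I would use: $G\setminus e$ splits into a component containing $u$ and a disjoint component containing $v$, and $G/e$ is their one-point union at $v_e$. Any cycle through $v_e$ in $G/e$ would have to cross from one side to the other, which is impossible. So every cycle of $G/e$ lies within one side and is already a cycle of $G\setminus e$, and the converse is immediate.

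Combining Steps 1 and 2 yields $\Tchi^{\Gamma}_G-\Tchi^{\Gamma}_{G\odot e}=(1+q)\bigl(\Tchi^{\Gamma}_{G\setminus e}-\Tchi^{\Gamma}_{(G\odot e)\setminus\ell_e}\bigr)$, as claimed. The internality of $e$ is not needed for this argument; it only ensures that $G\odot e$ differs from $G$.
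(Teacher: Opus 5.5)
Your proposal is correct and follows essentially the same route as the paper: both use the bridges ($e$ in $G$, $\ell_e$ in $G\odot e$) to write each side as $(1+q)$ times a sum over colorings in which the bridge endpoints share a color, and then identify the two sums by merging $u,v$ into $v_e$ and giving $v'$ the common color. Your explicit checks that $u$ and $v$ have no common neighbor, and that acyclicity is preserved under the contraction because $G/e$ is a one-point union at $v_e$, supply details the paper only asserts when it calls $\varphi$ a well-defined bijection.
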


\begin{proof}
    First, we rewrite the identity in the statement as
    \begin{align}\label{eq: initial near-contraction}
    (1+q) \Tchi^{\Gamma}_{G\backslash e} - \Tchi^{\Gamma}_G = + (1+q) \Tchi^{\Gamma}_{(G\odot e)\setminus \ell_e } - \Tchi^{\Gamma}_{G\odot e}.
    \end{align}
    Next, we observe the following. Let $H$ be a graph and $h$ be an edge in $H$ that is not part of a cycle. Then, we have that an acyclic orientation of $H\backslash h$ induces two acyclic orientations in $H$, one for each possible orientation of $h$. Moreover, a proper coloring of $H\backslash h$ corresponds to a proper coloring of $H$ as long as the vertices corresponding to $h$ have different colors. Moreover, we note that for the two orientations, the coloring either keeps the same number of ascents or increases it by one. This observation applies to both $G$, with the edge $e$, and $G\odot e$, with the edge $\ell_e$. It implies that the remaining terms on each side of~\eqref{eq: initial near-contraction} are those corresponding to colorings with the vertices of $e$ and $\ell_e$, respectively, having the same color in $G\backslash e$ and $(G\odot e) \backslash \ell_e$, respectively, with the $(1+q)$ factor. That is, if $e=\{u,v\}$ and $\ell_e = \{v',v_e\}$, we have that 
    \begin{align*}
    (1+q) \Tchi^{\Gamma}_{G\backslash e} - \Tchi^{\Gamma}_G    &= (1+q) \sum_{\gamma \in \Gamma(G\backslash e)} \sum_{\substack{\kappa \in \mathcal{K}(G\backslash e) \\ \kappa(u)=\kappa(v)}} q^{\asc^\gamma(\kappa)}x^\kappa, \qquad \text{ and } \\
    (1+q) \Tchi^{\Gamma}_{(G\odot e)\backslash \ell_e} - \Tchi^{\Gamma}_{G\odot e}    &= (1+q) \sum_{\gamma \in \Gamma((G\odot e)\backslash \ell_e)} \sum_{\substack{\kappa \in \mathcal{K}((G\odot e)\backslash \ell_e) \\ \kappa(v')=\kappa(v_e)}} q^{\asc^\gamma(\kappa)}x^\kappa,
    \end{align*}
    and so Equation~\eqref{eq: initial near-contraction} follows if we can show that 
    \begin{align*}
        \sum_{\gamma \in \Gamma(G\backslash e)} \sum_{\substack{\kappa \in \mathcal{K}(G\backslash e) \\ \kappa(u)=\kappa(v)}} q^{\asc^\gamma(\kappa)}x^\kappa = \sum_{\gamma \in \Gamma((G\odot e)\backslash \ell_e)} \sum_{\substack{\kappa \in \mathcal{K}((G\odot e)\backslash \ell_e) \\ \kappa(u)=\kappa(v)}} q^{\asc^\gamma(\kappa)}x^\kappa.
    \end{align*}
    We show this identity by giving a bijection $\varphi$ from pairs $(\gamma,\kappa)$ with $\gamma \in \Gamma(G\backslash e)$ and $\kappa \in \mathcal{K}(G\backslash e)$ such that $\kappa(u)=\kappa(v)$ to pairs $\varphi(\gamma,\kappa) = (\kappa',\gamma')$ with $\gamma' \in  \Gamma((G\odot e)\backslash \ell_e)$ and $\kappa' \in \mathcal{K}((G\odot e)\backslash \ell_e)$ such that $\kappa'(v')= \kappa'(v_e)$, where 
    \begin{itemize}
        \item $\gamma'$ is defined from $\gamma$ by keeping the same orientation on those edges not involving $u$ and $v$ in $G\backslash e$, and orienting the edges $\{v_e,w\}$ in $\gamma'$ with the same orientation as the corresponding edge $\{u,w\}$ or $\{v,w\}$ in $\gamma$, and 
        \item $\kappa'$ is defined from $\kappa$ as the coloring obtained by keeping all the vertices $w\notin \{u,v\}$ with the same color, $\kappa'(w) = \kappa(w)$, and $\kappa'(v_e):=\kappa(u) = \kappa(v) =: \kappa'(v')$. 
    \end{itemize}
    The map $\varphi$ is a well-defined bijection such that $\asc^\gamma(\kappa)=\asc^\gamma(\kappa')$ and $x^\kappa = x^{\kappa'}$. Thus, the result follows. 
\end{proof}

We finish this section by analyzing a particular case in which $\Tchi^{\Gamma}_G$ is symmetric and conjecturing a more general result. 
\begin{thm}\label{thm: TSto on trees}
Let $G=(V,E)$ be a tree with $m=|E|$. Then, 
$$\Tchi_G^{\Gamma} = \left(q+1\right)^m\chi_G.$$
In particular, $\Tchi_G^{\Gamma}(q)$ is symmetric.
\end{thm}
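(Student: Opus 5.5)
Since $G$ is a tree, it has no cycles, so every one of the $2^m$ orientations of $E$ is acyclic; that is, $\Gamma(G)$ is the full set $\{0,1\}^E$ of edge orientations. The plan is to exchange the order of summation in the definition of $\Tchi_G^{\Gamma}(q)$ and sum over orientations first, for a fixed proper coloring. Concretely, I would write
$$\Tchi_G^{\Gamma}(q) = \sum_{\gamma\in\Gamma}\sum_{\kappa\in\mathcal{K}} q^{\asc^\gamma(\kappa)}x^\kappa = \sum_{\kappa\in\mathcal{K}} x^\kappa \sum_{\gamma\in\Gamma} q^{\asc^\gamma(\kappa)}.$$

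The key step is to compute the inner sum for a fixed $\kappa$. Because $\kappa$ is proper, every edge $\{u,v\}$ has $\kappa(u)\neq\kappa(v)$. Hence exactly one of its two orientations is an ascent, namely the one pointing from the smaller color to the larger one. Since the orientations of distinct edges are chosen independently, with no acyclicity constraint for a tree, the number of ascents is a sum of independent contributions from each edge. The inner sum therefore factors as
$$\prod_{e\in E}(1+q) = (1+q)^m,$$
and this holds independently of $\kappa$.

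Substituting back gives $\Tchi_G^{\Gamma}(q) = (1+q)^m\sum_{\kappa}x^\kappa = (1+q)^m\chi_G$. Symmetry then follows immediately, since $\chi_G$ is symmetric and the factor $(1+q)^m$ lies in the coefficient ring $\mathbb{Q}[q]$.

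The only point needing care, and hence the main obstacle, is the claim that every orientation of a tree is acyclic. This is immediate: an oriented cycle would in particular be a cycle of the underlying graph, and a tree has none. It is also exactly where the argument fails for general graphs. For $C_4$, for instance, only $14$ of the $16$ orientations are acyclic, which is consistent with the non-factoring coefficients in Example~\ref{ex: C4 total orientation}.
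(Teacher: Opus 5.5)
Your proof is correct and is essentially the paper's argument. The paper compares coefficients, showing $[q^k]c^o_\alpha(q)=\binom{m}{k}c_{\mathrm{sort}(\alpha)}$: for a fixed proper coloring, choosing which $k$ edges are ascents determines an orientation, and in a tree that orientation is automatically acyclic. This is exactly the coefficient-wise form of your factorization $\sum_{\gamma}q^{\asc^\gamma(\kappa)}=(1+q)^m$, which you obtain more directly at the level of the whole generating function.
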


\begin{proof}
We start comparing the monomial expansions of $\chi_G$ and $\Tchi^{\Gamma}_G$ in the monomial basis for symmetric and quasisymmetric functions, respectively. Let 
$\chi_G = \sum_{\lambda}c_\lambda m_\lambda$ and $\Tchi_G^{\Gamma}(q) = \sum_{\alpha}c_\alpha^o(q)M_\alpha$. Then, $\Tchi_G^{\Gamma} = \left(q+1\right)^m\chi_G$ if and only if 
$$[q^k]c_\alpha^o(q) = c_\lambda\binom{m}{k}, \qquad \text{for any $\alpha$ such that $sort(\alpha)=\lambda$.}$$

On one hand, the LHS counts the number of colorings $\kappa$ and acyclic orientations $\gamma$ such that $x^\kappa = x^\alpha$ and $\asc^\gamma(\kappa)=k$. On the other hand, in the RHS, $c_\lambda$ counts the number of colorings $\kappa'$ such that $x^\kappa = x^\beta$ for any composition $\beta$ with $sort(\beta)=\lambda$, and in particular we can take $\beta = \alpha$. Moreover, $\binom{m}{k}$ counts the number of ways of choosing $k$ edges among $m=|E(G)|$. Since any orientation in a tree is an acyclic orientation, any $k$-subset of $E(G)$ gives us an acyclic orientation so that $\asc^\gamma(\kappa)=k$. Thus, the result follows. 

\end{proof}

We finish this section with a conjecture generalizing the previous result for trees.
\begin{conj}
If $G$ is a graph for which no two cycles share an edge, then $\Tchi_G^{\Gamma}(q)$ is symmetric.
\end{conj}

\subsection{Comparison: Labelings vs acyclic orientations}\label{subsec: comparison}

These functions do share several similar properties. For instance, as shown in Proposition~\ref{prop: palindrom Label} and Theorem~\ref{thm: palindrom Acyclic}, both functions yield palindromic coefficients in their monomial expansions. Similarly, as in Theorems~\ref{thm: reverse Labeling} and~\ref{thm: reverese Acyclic}, we achieve some partial symmetry of $\Tchi_G^{\mathcal{L}}(q)$ and $\Tchi^{\Gamma}_G(q)$ in general.

One immediate difference between these two interpretations of ``normalizing" the chromatic quasisymmetric function is the behavior of the function on trees. In particular, note that if $G = \St_n^\rt$, then $\Tchi_G^{\mathcal{L}}(q)$ is not symmetric in general (e.g. Example~\ref{ex: TSt4(q)}), while Theorem~\ref{thm: TSto on trees} shows that $\Tchi_G^{\Gamma}(q)$ is always symmetric. On the contrary, $\Tchi_G^{\Gamma}$ admits a deletion-near-contraction formula that can be applied to graphs with cycles, while no such deletion-near-contraction formula has been found for $\Tchi_G^{\mathcal{L}}(q)$.

Another big difference between these two functions is in the size of the coefficients. Since there are, in general, many more potential labelings of a graph than there are acyclic orientations, $\Tchi^{\mathcal{L}}_G$ often has much larger coefficients than $\Tchi^{\Gamma}_G$, as shown in the following example.

\begin{example}
    Let $G = C_4$. Let $\Tchi_G^{\mathcal{L}}(q) = \sum_{\alpha \vdash 4}c^{\mathcal{L}}_\alpha(q)M_\alpha$ and $\Tchi^{\Gamma}_G(q) = \sum_{\alpha\vdash 4}c^{\Gamma}_\alpha(q)M_\alpha$. The following table compares $c^{\mathcal{L}}_\alpha(q)$ and $c^{\Gamma}_\alpha(q)$ for each $\alpha$ for which they are nonzero.
\begin{center}
\begin{tabular}{ |c|c|c| } 
 \hline
 $\alpha$ & $c^{\mathcal{L}}_\alpha(q)$ & $c^{\Gamma}_\alpha(q)$\\
 \hline
$(1,1,1,1)$ & $56q^4 + 128q^3 + 208q^2 + 128q + 56$ & $24q^4 + 88q^3 + 112q^2 + 88q + 24$ \\
\hline
$(1,1,2)$ & $16q^4 + 16q^3 + 32q^2 + 16q + 16$ & $4q^4 + 16q^3 + 16q^2+16q+4$\\
\hline
$(1,2,1)$ & $8q^4 + 16q^3 + 48q^2+16q+8$ & $4q^4 + 16q^3 + 16q^2+16q+4$\\
 \hline
 $(2,1,1)$ & $16q^4 + 16q^3 + 32q^2 + 16q + 16$ & $4q^4 + 16q^3 + 16q^2+16q+4$\\
 \hline
  $(2,2)$ & $8q^4 +8q^3 + 16q^2 + 8q + 8$ & $2q^4 + 8q^3 + 8q^2 + 8q + 2$\\
 \hline
\end{tabular}
\end{center}
Note also that this is another example that shows their discrepancy in terms of symmetry since $\Tchi^{\Gamma}_{C_4}(q)$ is a symmetric function, while $\Tchi_{C_4}(q)$ is not.
\end{example}

\section{The star graph}\label{sec: The star graph}
In this section, we focus our study on the star graph and examine its chromatic quasisymmetric function, as well as its total chromatic quasisymmetric functions, both with respect to labelings and orientations. Our study focuses on the expansion in the monomial bases for the corresponding framework, symmetric functions or quasisymmetric functions. The computation for the CQF and for the total orientation CQF is relatively straightforward. However, for the total labeling CQF of the star graph, the formula is more interesting and relies on proving a binomial identity, which we do in Section~\ref{sec: binomial identity}.

\subsection{The chromatic quasisymmetric function}

In this section, we include the expansion of the CQF of the star graph in the monomial quasisymmetric basis. 

Recall that we denote the star graph in $n$ vertices by $\St_n$, and the root by $\rt$. We also 
denote by $\St_n^\rt$ the star graph together with a labeling $L$ for which $L(\rt)=\rt$. Moreover, we denote by $\st_n$ the CSF of the star graph $\St_n$ and by $\st_n(q)$ its CQF. When referring to $\St_n^\rt$, we denote by $\st_n^\rt$ and by $\st_n^\rt(q)$ its CSF and its CQF, respectively. Note that $\st_n^\rt(q)$ is independent of the choice of labeling as long as  $L(\rt)=\rt$. Throughout this section, when we write $\St_n^\rt$, we assume that the root vertex is denoted by $\rt$ and that there is a labeling $L$ of the vertices of $G$ such that $L(\rt)=\rt$. 

We need to introduce some notation related to the weak compositions $\alpha$.
Let $\alpha=(\alpha_1,\ldots,\alpha_\ell)$ be the weak composition of $n$ encoding the coloring $\kappa$; that is, $\alpha_i$ counts the number of vertices with color $i$ in $\kappa$. We define $\lcomp{\alpha}{i}$ and $\rcomp{\alpha}{i}$ as the weak compositions obtained by looking at the entries strictly to the left and right of $\alpha_i$, respectively; that is, $\demph{$\lcomp{\alpha}{i}$} = (\alpha_1, \alpha_2, \dots, \alpha_{i-1})$ and $\demph{$\rcomp{\alpha}{i}$} = (\alpha_{i+1}, \alpha_{i + 2}, \dots, \alpha_\ell)$. We also denote its sizes, respectively, by $\demph{$L_{\alpha_i}$} = |\lcomp{\alpha}{i}|$ for $i>1$, with $L_{\alpha_1} = 0$, and by $\demph{$R_{\alpha_i}$} = |\rcomp{\alpha}{i}|$ for $i<\ell$, with $R_{\alpha_\ell} = 0$. Similarly, $L_{\alpha_i}^{m} = \min\{m, L_{\alpha_i}\}$ and $R_{\alpha_i}^{m} = \min\{m, R_{\alpha_i}\}$. Finally, we denote by $\mult{\alpha}$ the multinomial coefficient $\demph{$\mult{\alpha}$} = \displaystyle{\binom{|\alpha|}{\alpha_1,\ldots,\alpha_\ell}}$, where $0!=1$ and $\mult{\emptyset} = \mult{0}=1$. 

The following result summarizes all the possible values for the number of ascents of $\kappa$ in terms of the label and the coloring of the root, $L(\rt)$ and $\kappa(\rt)$. 

\begin{lemma}\label{lem: number of ascents}
Consider the star graph $\St_n^\rt$ and a proper coloring $\kappa$ such that $\kappa(\rt)=i$.  Then,
$$\asc^L(\kappa) = \left| n-\rt-L_{\alpha_i} \right| +2j, 
\qquad\quad \text{ for } \quad
0\leq j \leq \min \left\{ L_{\alpha_i}^{n-\rt},R_{\alpha_i}^{\rt-1}\right\}. $$
\end{lemma}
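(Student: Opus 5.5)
The plan is to count ascents directly, edge by edge, by sorting the leaves according to their label and their color relative to the root; the lemma then becomes a change of variables.

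\emph{Sorting the leaves.} In $\St_n^\rt$ every edge has the form $\{\rt,v\}$ with $v$ a leaf, and the leaf labels are $\{1,\ldots,n\}\setminus\{\rt\}$. So there are $\rt-1$ ``low'' leaves (label $<\rt$) and $n-\rt$ ``high'' leaves (label $>\rt$).

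\emph{Sorting by color.} Since $\kappa$ is proper and every leaf is adjacent to $\rt$, no leaf has color $i=\kappa(\rt)$. Hence $\alpha_i=1$, and every leaf is colored either below $i$ or above $i$. Vertices colored below $i$ are exactly the leaves counted by $L_{\alpha_i}$, and those colored above $i$ are counted by $R_{\alpha_i}$. In particular $L_{\alpha_i}+R_{\alpha_i}=n-1$.

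\emph{Which edges are ascents.} By definition, an edge $\{\rt,v\}$ is an ascent exactly when either $v$ is high and colored above $i$, or $v$ is low and colored below $i$.

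\emph{Counting.} Let $a$ be the number of high leaves colored above $i$. Then $n-\rt-a$ high leaves are colored below $i$. The remaining $L_{\alpha_i}-(n-\rt-a)$ below-colored leaves are low, and all of them are ascents. Therefore
$$\asc^L(\kappa)=a+\bigl(L_{\alpha_i}-(n-\rt)+a\bigr)=2a+L_{\alpha_i}-(n-\rt).$$

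\emph{Range of $a$.} The number $a$ is constrained by $0\le a\le n-\rt$ and $a\le R_{\alpha_i}$. It is also constrained by the nonnegativity of the number of low below-colored leaves, $a\ge n-\rt-L_{\alpha_i}$, and by the number of low above-colored leaves being at most $\rt-1$, i.e.\ $R_{\alpha_i}-a\le \rt-1$. Every value of $a$ in this range is realized by some placement of labels, but the lemma only needs the description of the values of $\asc^L(\kappa)$.

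\emph{Case $L_{\alpha_i}\le n-\rt$.} Set $j=a-(n-\rt-L_{\alpha_i})$. Then $\asc^L(\kappa)=(n-\rt-L_{\alpha_i})+2j$, and $j$ runs from $0$ to $\min(n-\rt,R_{\alpha_i})-(n-\rt-L_{\alpha_i})$. Using $L_{\alpha_i}+R_{\alpha_i}=n-1$, this upper bound equals $\min(L_{\alpha_i},\rt-1)$. One then checks that it equals $\min\{L^{n-\rt}_{\alpha_i},R^{\rt-1}_{\alpha_i}\}$. This holds because $L_{\alpha_i}\le n-\rt$ forces $R_{\alpha_i}\ge \rt-1$.

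\emph{Case $L_{\alpha_i}> n-\rt$.} Take $j=a$. Then $\asc^L(\kappa)=(L_{\alpha_i}-n+\rt)+2j$, with $0\le j\le \min(n-\rt,R_{\alpha_i})$. Here $R_{\alpha_i}<\rt-1$ and $n-\rt<L_{\alpha_i}$, so this bound again equals $\min\{L^{n-\rt}_{\alpha_i},R^{\rt-1}_{\alpha_i}\}$.

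In both cases the offset is $|n-\rt-L_{\alpha_i}|$, which gives the formula in the statement. The only delicate point is this final bookkeeping: checking that the two-sided constraints on $a$ collapse to the symmetric expression $\min\{L^{n-\rt}_{\alpha_i},R^{\rt-1}_{\alpha_i}\}$ in each case, using $L_{\alpha_i}+R_{\alpha_i}=n-1$.
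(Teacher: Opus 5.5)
Your proof is correct and is essentially the paper's argument. Both sort the leaves by label relative to $\rt$ and color relative to $i$, count the two kinds of ascending edges, and split into the cases $L_{\alpha_i}\le n-\rt$ and $L_{\alpha_i}>n-\rt$; your shifted $j$ is exactly the paper's $j$ (the number of low leaves colored below $i$ in the first case, and the number of high leaves colored above $i$ in the second). Working with the single parameter $a$ also keeps the arithmetic cleaner than the paper's second case, whose intermediate counts are misstated even though its final formula is right.
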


\begin{proof}
We start by observing that since the root $r$ has color $i$ and is connected to all the other vertices, there are no other vertices with color $i$. That is, $\alpha_i=1$. 

We split the argument into two cases depending on whether $n-\rt-L_{\alpha_i}$ is positive or negative. 
\begin{itemize}
    \item Case $L_{\alpha_i} \leq n-\rt$. In this case, $L_{\alpha_i}^{n-\rt} = L_{\alpha_i}$, and $R_{\alpha_i} = n-1-L_{\alpha_i} \geq n-1 - (n-\rt) = \rt-1$, so that $R_{\alpha_i}^{\rt-1} = \rt-1$. 
    
    Let $j$ be the number of vertices $v$ such that $L(v)<\rt$ and $\kappa(v)<i$. We want to count how many vertices $v$ there are with $L(v)>\rt$ and $\kappa(v)>i$, since these are the ones contributing to the number of ascents. We know that there are $L_{\alpha_i}-j$ vertices $v$ such that $L(v)> \rt$ and $\kappa(v)<i$. Consequently, there are $n-1-L_{\alpha_i}$ vertices with $\kappa(v)>i$, and of those, $\rt-1-j$ have $L(v)<\rt$. Therefore, there are exactly $n-1-L_{\alpha_i} - (\rt-1-j) = n-\rt-L_{\alpha_i}+j$ vertices $v$ such that $L(v)>\rt$ and $\kappa(v)>i$. Thus, $\kappa$ has $\left| n-\rt-L_{\alpha_i} \right| +2j$ ascents.

    Now, because there cannot be more than $\rt-1$ vertices with $L(v)< \rt$, we have that $j\leq \rt-1 = R_{\alpha_i}^{\rt-1}$. We also know that there are most $L_{\alpha_i}$ vertices with $\kappa(v)<i$, and so $j\leq L_{\alpha_i}^{n-\rt}$. Thus, $j\leq \min\left\{ L_{\alpha_i}^{n-\rt},R_{\alpha_i}^{\rt-1}\right\}$. 

    \item Case $L_{\alpha_i} > n-\rt$. In this case, $L_{\alpha_i}^{n-\rt} = n-\rt$, and $R_{\alpha_i}^{\rt-1} = R_{\alpha_i}$, since $R_{\alpha_i} = n-1-L_{\alpha)i} < n-1-(n-\rt) = \rt-1$. 

    Let $j$ be the number of vertices $v$ such that $L(v)>\rt$ and  $\kappa(v)>i$. We want to count how many vertices $v$ there are with $L(v)<\rt$ and $\kappa(v)<i$, since we know they also contribute to the number of ascents. We know that there are $R_{\alpha_i} - j$ vertices $v$ such that $L(v)<\rt$ and $\kappa(v)>i$.  Consequently, there are $n-1-R_{\alpha_i}$ vertices with $\kappa(v)<i$, and of those, $n-\rt-1-j$ have $L(v)>\rt$. Therefore, there are exactly $n-1-R_{\alpha_i} - (n-\rt-1-j) = \rt+R_{\alpha_i}+j$ vertices $v$ such that $L(v)>\rt$ and $\kappa(v)>i$. Recalling that $R_{\alpha_i} = n-1-L_{\alpha_i}$, we get that $\kappa$ has $\left| n-\rt-L_{\alpha_i} \right| +2j$ ascents.

    Now, because there cannot be more than $R_{\alpha_i}$ vertices with $\kappa(v)>i$, we have that $j\leq R_{\alpha_i} = R_{\alpha_i}^{\rt-1}$. We also know that there are most $n-\rt$ vertices with $L(v)>\rt$, and so $j\leq n-\rt=L_{\alpha_i}^{n-\rt}$. Thus, $j\leq \min\left\{ L_{\alpha_i}^{n-\rt},R_{\alpha_i}^{\rt-1}\right\}$. \qedhere
\end{itemize}
\end{proof}

With this lemma, we are now ready to describe the coefficients in the monomial expansion of the CQF of the star graph. 
\begin{prop}\label{prop: star QSF decomp}
Consider $\St_n^\rt$ and the $M$-expansion of its CQF,  $\displaystyle{\st_n^\rt(q) = \sum_{\alpha \models n}c_\alpha^\rt(q)M_\alpha}$. Then, 
$$
c_\alpha^\rt(q) = \sum_{i} \mult{\rcomp{\alpha}{i}}\mult{\lcomp{\alpha}{i}}  \sum_{j = 0}^{min\{L_{\alpha_i}^{n-\rt}, R_{\alpha_i}^{\rt-1}\}} \binom{n-\rt}{L_{\alpha_i}^{n-\rt}-j}\binom{\rt-1}{R_{\alpha_i}^{\rt-1}-j}q^{2j + |n-\rt-L_{\alpha_i}|},
$$
where the first summation runs over the $i$ such that $\alpha_i=1$.
\end{prop}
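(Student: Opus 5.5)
The coefficient $c_\alpha^\rt(q)$ is the $q$-weighted count of proper colorings $\kappa$ whose weak composition is exactly $\alpha$ (the coefficient of $x^\alpha = x_1^{\alpha_1}\cdots x_\ell^{\alpha_\ell}$). I will organize this count by the color $i=\kappa(\rt)$ of the root. As observed in the proof of Lemma~\ref{lem: number of ascents}, the root is adjacent to every other vertex, so $\alpha_i=1$. This explains why the outer sum runs only over $i$ with $\alpha_i=1$. Fixing such an $i$, the remaining $n-1$ leaves must receive colors in $\{1,\ldots,\ell\}\setminus\{i\}$ with multiplicities $\alpha_j$, $j\neq i$. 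Any such assignment is proper, since the leaves are pairwise nonadjacent.

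Next, I would refine the count by the set $S$ of leaves receiving colors $<i$. This set has size $L_{\alpha_i}$; its complement has size $R_{\alpha_i}$ and receives colors $>i$. Once $S$ is fixed, the number of ways to distribute colors $1,\ldots,i-1$ on $S$ with the prescribed multiplicities is $\mult{\lcomp{\alpha}{i}}$, and similarly $\mult{\rcomp{\alpha}{i}}$ on the complement. The key point is that the ascent count depends only on $S$, not on this finer distribution: an edge $\rt v$ is an ascent iff either $L(v)>\rt$ and $\kappa(v)>i$, or $L(v)<\rt$ and $\kappa(v)<i$. So the contribution is $\mult{\lcomp{\alpha}{i}}\mult{\rcomp{\alpha}{i}}$ times the sum over subsets $S$ of size $L_{\alpha_i}$ of the leaves of $q^{\asc(S)}$.

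It remains to compute this subset sum. The leaves split into $n-\rt$ \emph{high} leaves (label $>\rt$) and $\rt-1$ \emph{low} leaves (label $<\rt$). I would follow the two cases of Lemma~\ref{lem: number of ascents}.

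If $L_{\alpha_i}\le n-\rt$, let $j=|S\cap\text{low}|$. Then $|S\cap\text{high}|=L_{\alpha_i}-j$, and the number of such $S$ is
$$\binom{\rt-1}{j}\binom{n-\rt}{L_{\alpha_i}-j}.$$
Since here $R^{\rt-1}_{\alpha_i}=\rt-1$, we have $\binom{\rt-1}{j}=\binom{\rt-1}{R^{\rt-1}_{\alpha_i}-j}$ and $L_{\alpha_i}=L^{n-\rt}_{\alpha_i}$. By the lemma, each such $S$ contributes $q^{|n-\rt-L_{\alpha_i}|+2j}$.

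In the other case, $L_{\alpha_i}> n-\rt$, I parametrize instead by $j=|S^c\cap\text{high}|$. Then $|S^c\cap\text{low}|=R_{\alpha_i}-j$, and the count is
$$\binom{n-\rt}{j}\binom{\rt-1}{R_{\alpha_i}-j}.$$
Using $n-\rt=L^{n-\rt}_{\alpha_i}$ and the symmetry $\binom{n-\rt}{j}=\binom{n-\rt}{L^{n-\rt}_{\alpha_i}-j}$, this matches the stated binomial product, again with exponent $|n-\rt-L_{\alpha_i}|+2j$ from the lemma.

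In both cases the range of $j$ is exactly $0\le j\le \min\{L^{n-\rt}_{\alpha_i},R^{\rt-1}_{\alpha_i}\}$, with out-of-range binomials vanishing. Summing over $i$ then gives the formula.

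The main obstacle is the bookkeeping. I need to check that the parameter $j$ I choose in each case coincides with the $j$ in Lemma~\ref{lem: number of ascents}, so that the exponent is right, and to rewrite the binomials uniformly using the $\min$-truncated quantities $L^{n-\rt}_{\alpha_i},R^{\rt-1}_{\alpha_i}$. The symmetry $\binom{a}{b}=\binom{a}{a-b}$ handles the case where the parameter is full.
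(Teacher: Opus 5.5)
Your proposal is correct and takes essentially the same route as the paper: fix the root color $i$ (which forces $\alpha_i=1$), choose which leaves receive colors below $i$, count these choices with binomials via the two cases of Lemma~\ref{lem: number of ascents}, and multiply by $\mult{\lcomp{\alpha}{i}}\mult{\rcomp{\alpha}{i}}$ for the finer color assignment. Your explicit treatment of the case $L_{\alpha_i}>n-\rt$, parametrized by the high leaves colored above $i$, correctly fills in the case the paper only calls ``similar.''
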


\begin{proof}
First, note that fixing $i$ such that $\alpha_i=1$ is essentially fixing the color of the root. That is, we want to describe the polynomial given by looking at the proper colorings $\kappa$ such that its associated weak composition is $\alpha$ and $\kappa(\rt)=i$. We count these proper colorings by first choosing those vertices that are colored less than or greater than $i$, and then choosing the specific colors afterwards.

By Lemma~\ref{lem: number of ascents}, we know that $\asc^L(\kappa) = \left| n-\rt-L_{\alpha_i} \right| +2j$, where $j$ denotes the number of vertices $v$ such that $L(v)<\rt$ and $\kappa(v)<i$, or $L(v)>\rt$ and $\kappa(v)>i$, and we also know that $0\leq j \leq \min \left\{ L_{\alpha_i}^{n-\rt},R_{\alpha_i}^{\rt-1}\right\}$. The argument now splits into two cases, one where $n - \rt \geq L_{\alpha_i}$ and one where $n - \rt < L_{\alpha_i}$. We will show the first case, but the second case follows a similar argument.

Suppose that $n - \rt \geq L_{\alpha_i}$. Following a similar process to the one outlined in the proof of Lemma~\ref{lem: number of ascents}, we choose $j$ vertices such that $L(v) < \rt$ to be colored such that $\kappa(v) < i$. There are $\rt-1$ of these vertices, so there are $\binom{\rt-1}{j}$ ways to do this. Note that since $\rt-1 = R_{\alpha_i}^{\rt-1}$ in this case, $\binom{\rt-1}{j} = \binom{\rt-1}{R_{\alpha_i}^{\rt-1}-j}$. We still need to choose the remaining $L_{\alpha_i} - j$ vertices that must be colored less than $i$. Since $L_{\alpha_i} = L_{\alpha_i}^{n-\rt}$ in this case, we choose $L_{\alpha_i}^{n-\rt} - j$ vertices from the $n - \rt$ vertices such that $L(v) > \rt$ and there are $\binom{n-\rt}{L_{\alpha_i}^{n-\rt}-j}$ ways to do it. 

Thus, we have determined exactly which vertices are colored greater than $i$ and which are colored less than $i$. Finally, the factor $\displaystyle{\mult{\rcomp{\alpha}{i}}\mult{\lcomp{\alpha}{i}}}$ comes from choosing the precise color for every vertex. This gives, in total, $\mult{\rcomp{\alpha}{i}}\mult{\lcomp{\alpha}{i}}\binom{n-\rt}{L_{\alpha_i}^{n-\rt}-j}\binom{\rt-1}{R_{\alpha_i}^{\rt-1}-j}$ ways to choose one of these proper colorings, and this proves the claim.
\end{proof}

We also have the following property, which will be useful later in the paper.

\begin{lemma}\label{equality of alpha^r} 
Let $\St^\rt_n$ be the star graph on $n$ vertices, and consider the $M$-expansion of its CQF, $\st_n^\rt (q) = \sum_{\alpha} c^\rt_\alpha(q)M_\alpha$. Then, 
$$
[q^k]c_\alpha^\rt(q) = [q^{n-1-k}]c_\alpha^{n-1-\rt}(q) \hspace{0.5cm} \text{and} \hspace{0.5cm} [q^k]c_\alpha^\rt(q) = [q^{n-1-k}]c_{\alpha^{\rev}}^\rt,
$$
where $\alpha^{\rev}$ is the reverse of $\alpha$.
\end{lemma}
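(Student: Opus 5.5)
Both identities come from involutions on proper colorings (and labelings) of $\St_n^\rt$ that turn every ascent into a non-ascent and vice versa. The star graph has $|E|=n-1$ edges, so each such involution sends $k$ ascents to $n-1-k$ ascents. The argument mirrors the proofs of Proposition~\ref{prop: palindrom Label} and Theorem~\ref{thm: reverse Labeling}. The difference is that here we work with a fixed root label, so we must track what each involution does to $L(\rt)$. Throughout, I use the fact noted above that $\st_n^\rt(q)$ depends only on the value $L(\rt)=\rt$ and not on the rest of the labeling. This lets me replace one admissible labeling by any other with the same root label.

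For the first identity, fix a labeling $L$ with $L(\rt)=\rt$ and define the complementary labeling $L'(v)=n+1-L(v)$. For every edge $\{\rt,v\}$ the relative order of $L(\rt)$ and $L(v)$ is reversed, while the coloring $\kappa$ is unchanged. Hence an edge is an ascent for $(\kappa,L)$ exactly when it is not an ascent for $(\kappa,L')$, which gives $\asc^{L'}(\kappa)=n-1-\asc^{L}(\kappa)$. The coloring, and therefore the composition $\alpha$, is untouched, so $\kappa\mapsto\kappa$ gives $[q^k]c^{\rt}_\alpha(q)=[q^{n-1-k}]c^{L'(\rt)}_\alpha(q)$.

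The one point needing care is the root label of $L'$, namely $L'(\rt)=n+1-\rt$. This is the complementary root position, and I read the superscript in the statement as denoting that position, i.e.\ the root label obtained from $\rt$ under label reversal. I will make this explicit and check it against the explicit formula of Proposition~\ref{prop: star QSF decomp}. Under $\rt\mapsto n+1-\rt$, the exponent $|n-\rt-L_{\alpha_i}|+2j$ should become $n-1$ minus itself after the appropriate reindexing of $j$, and the binomials $\binom{n-\rt}{\cdot}\binom{\rt-1}{\cdot}$ should swap roles. Matching this index convention is the only step I expect to cause trouble, and the formula check is how I would settle it.

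For the second identity, keep the labeling $L$ (so the root label $\rt$ is unchanged) and reverse colors. For a coloring $\kappa$ whose associated composition is $\alpha$ with $\ell=\ell(\alpha)$, set $\kappa^r(v)=\ell+1-\kappa(v)$; this is the same map as in Theorem~\ref{thm: reverese Acyclic}. Then $\kappa^r$ is again proper, its associated composition is $\alpha^{\rev}$, and it reverses the color order on every edge. Since $L$ is fixed, every ascent becomes a non-ascent and vice versa, so $\asc^L(\kappa^r)=n-1-\asc^L(\kappa)$. The map $\kappa\mapsto\kappa^r$ is an involution between colorings of type $\alpha$ and colorings of type $\alpha^{\rev}$, which gives $[q^k]c^\rt_\alpha(q)=[q^{n-1-k}]c^\rt_{\alpha^{\rev}}(q)$. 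The $M$-coefficients are read off by counting colorings with associated composition exactly $\alpha$, so no further bookkeeping is needed.
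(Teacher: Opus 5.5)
Your proposal is correct and follows the paper's argument: complementing the labeling with $\kappa$ fixed gives the first identity, and reversing the colors with $L$ fixed gives the second, because in each case every one of the $n-1$ edges of the star switches between ascent and non-ascent. Your normalizations $L'(v)=n+1-L(v)$ and $\kappa^r(v)=\ell+1-\kappa(v)$ (applied to colorings whose weak composition is exactly $\alpha$) are the right ones. The paper's proof instead uses $n-1-L(v)$ and $n-1-\kappa(v)$, which is where the superscript $n-1-\rt$ in the statement comes from; that superscript should read $n+1-\rt$, as you interpreted it. For example, with $n=4$ and $\alpha=(1,1,2)$ one has $[q^3]c^1_\alpha=3=[q^0]c^4_\alpha$ but $[q^0]c^2_\alpha=0$. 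The bijective argument already suffices, so your planned formula check is unnecessary.
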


\begin{proof}
We first prove that $[q^k]c_\alpha^\rt(q) = [q^{n-1-k}]c_\alpha^{n-1-\rt}(q)$. Let $L$ be a labeling of $\St_n^\rt$ such that $L(\rt)=\rt$, and define $L^{\rev}$ as the labeling defined by $L^{\rev}(v) = n-1-i$ if $L(v)=i$. Then,  the root is labeled $L(\rt)=n-1-\rt$ and $L^{\rev}$ is a labeling of $\St_n^{n-1-\rt}$.
Now, $[q^k]c_\alpha^\rt(q)$ counts the number of colorings $\kappa$ of $\St_n^\rt$ with associated composition $\alpha$ and that has $k$ ascents with respect to $L$. These are exactly the colorings of $\St_n^{n-1-\rt}$ with associated composition $\alpha$ and that have $n-1-k$ ascents with respect to $L^{\rev}$. Thus, $[q^k]c_\alpha^\rt(q) = [q^{n-1-k}]c_\alpha^{n-1-\rt}(q)$.

Next, we prove that $[q^k]c_\alpha^\rt(q) = [q^{n-1-k}]c_{\alpha^{\rev}}^\rt$ similarly. Let $L$ be a labeling of $\St_n^\rt$ such that $L(\rt)=\rt$, and consider a coloring $\kappa$ of $\St_n^\rt$ with associated composition $\alpha$ and that has $k$ ascents with respect to $L$. We define the coloring $\kappa^{\rev}$ by setting $\kappa^{\rev}(v) = n-1-i$ if $\kappa(v)=i$. Therefore, $\kappa^{\rev}$ is a coloring of $\St^\rt_n$ with associated composition $\alpha^{\rev}$ and that has $n-1-k$ ascents. Thus, $[q^k]c_\alpha^\rt(q) = [q^{n-1-k}]c_{\alpha^{\rev}}^\rt$.
 \end{proof}

\subsection{Total CQF via acyclic orientations}

In this section, we compute the expansion of the total orientation CQF of $\St_n$, which is a direct consequence of the $m$-expansion of the CSF of $\St_n$. Before stating the results, we introduce some notation. Given a partition $\lambda = (\lambda_1, \lambda_2, \dots, \lambda_\ell)$, we can rewrite it in terms of its repeated parts, $\lambda = \left\langle 1^{a_1}\ldots k^{a_k}\right\rangle$, where $a_i$ denotes the number of entries in $\lambda$ equal to $i$. We write $a_i(\lambda)$ whenever we need to specify the partition $\lambda$. We also denote $\demph{$\tilde{\lambda}$} = (\lambda_1, \lambda_2, \dots, \lambda_{\ell - 1})$.

\begin{thm}\label{thm: starCSF}
Consider $\St_n$ and the $m$-expansion of its CSF,   
$\st_n = \sum_{\lambda \vdash n} c_\lambda m_\lambda$. Then, $$c_\lambda = a_1(\lambda)\cdot \mult{\tilde{\lambda}}.$$
\end{thm}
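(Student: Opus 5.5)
We prove the formula by counting, for a fixed partition $\lambda=(\lambda_1,\ldots,\lambda_\ell)\vdash n$, the proper colorings of $\St_n$ whose weak composition is one fixed representative of $\lambda$. Since $\st_n$ is symmetric, $c_\lambda$ is the coefficient of $x_1^{\lambda_1}x_2^{\lambda_2}\cdots x_\ell^{\lambda_\ell}$. So we count proper colorings $\kappa$ with $\#\kappa^{-1}(i)=\lambda_i$ for $1\le i\le \ell$. The structural fact we use, already noted in the proof of Lemma~\ref{lem: number of ascents}, is that the root $\rt$ is adjacent to every other vertex. Hence its color class is exactly $\{\rt\}$. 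The leaves are pairwise nonadjacent, so any assignment of the remaining colors to leaves is proper.

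The first step is to choose the color of the root. It must be a color $i$ with $\lambda_i=1$, which gives $a_1(\lambda)$ choices. If $a_1(\lambda)=0$, the formula correctly gives $c_\lambda=0$. The second step is to color the $n-1$ leaves so that the remaining colors $j\neq i$ appear exactly $\lambda_j$ times each. Because there are no constraints among leaves, the number of such colorings is the multinomial coefficient
$$\binom{n-1}{\lambda_1,\ldots,\widehat{\lambda_i},\ldots,\lambda_\ell}.$$

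The third step is to show this count does not depend on which $i$ with $\lambda_i=1$ was chosen. Removing any part equal to $1$ from $\lambda$ yields the same multiset of parts. Since $\lambda_\ell$ is the smallest part and $a_1(\lambda)\ge1$, we have $\lambda_\ell=1$. So this multiset is exactly the multiset of parts of $\tilde\lambda=(\lambda_1,\ldots,\lambda_{\ell-1})$, and $|\tilde\lambda|=n-1$. The multinomial coefficient therefore equals $\mult{\tilde\lambda}$ for every admissible $i$. Summing over the $a_1(\lambda)$ choices of root color gives $c_\lambda=a_1(\lambda)\cdot\mult{\tilde\lambda}$.

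No step is a real obstacle. The only point needing care is this identification of the multinomial with $\mult{\tilde\lambda}$: it uses that $\lambda_\ell=1$ whenever $a_1(\lambda)>0$, while the case $a_1(\lambda)=0$ is handled directly by the vanishing factor $a_1(\lambda)$.
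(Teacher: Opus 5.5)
Your proposal is correct and follows essentially the same route as the paper: you choose the root's color among the $a_1(\lambda)$ parts equal to $1$, then color the leaves freely in $\mult{\tilde{\lambda}}$ ways, using that deleting any part equal to $1$ gives the same multinomial as deleting $\lambda_\ell=1$. Your separate treatment of the case $a_1(\lambda)=0$ is a small improvement, since the paper asserts $a_1(\lambda)\geq 1$ without this qualification.
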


\begin{proof}
Recall that $c_\lambda$ counts the number of proper colorings of $\St_n$ with associated composition $\lambda$. We can count these colorings by choosing first a color for the root, and then choosing colors for all other vertices. We start noticing that $a_1(\lambda) \geq 1$ since for any proper coloring of $\St_n$, the color of the root is never repeated. Now, the root $r$ can be colored any color $i$ such that $\lambda_i=1$, and so there are $a_1(\lambda)$ ways to color the root. Suppose that the root is colored $i$. Then, there are $\mult{(\lambda_1, \lambda_2, \ldots, \lambda_{i-1}, \lambda_{i+1},\ldots, \lambda_\ell)}$ ways to color the remaining vertices. Since $\lambda$ is a partition, we have that $\lambda_i=\lambda_{i+1}=\ldots = \lambda_{\ell} =1$, and so $\mult{(\lambda_1, \lambda_2, \ldots, \lambda_{i-1}, \lambda_{i+1},\ldots, \lambda_\ell)}=\mult{\tilde{\lambda}}$ and the result follows. 
\end{proof}

Applying Theorems~\ref{thm: TSto on trees} and~\ref{thm: starCSF}, we obtain the following.
\begin{cor}
    Let $G = \St_n$. Then, 
    $$\Tchi_{\St_n}^o(q) = \sum_{\lambda \vdash n}a_1(\lambda)\cdot \mult{\tilde{\lambda}}(q+1)^{n-1}m_\lambda.$$
\end{cor}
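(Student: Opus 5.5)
This is a direct combination of Theorem~\ref{thm: TSto on trees} and Theorem~\ref{thm: starCSF}. The plan is to first note that $\St_n$ is a tree with exactly $m=n-1$ edges, namely the edges $\rt v$ for the $n-1$ non-root vertices $v$. Theorem~\ref{thm: TSto on trees} then applies verbatim and gives
$$\Tchi_{\St_n}^{\Gamma}(q) = (q+1)^{n-1}\chi_{\St_n} = (q+1)^{n-1}\st_n.$$

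Next, I will substitute the $m$-expansion of $\st_n$ from Theorem~\ref{thm: starCSF}, namely $\st_n=\sum_{\lambda\vdash n} a_1(\lambda)\mult{\tilde\lambda}\, m_\lambda$. Multiplying by the scalar $(q+1)^{n-1}\in\mathbb{Q}[q]$ termwise yields the displayed formula. Here $\Tchi^o$ is understood as the same object as $\Tchi^{\Gamma}$, and the expression is read in $\Lambda$ with coefficients in $\mathbb{Q}[q]$.

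If one wants the $M$-expansion instead, I will use $m_\lambda=\sum_{\mathrm{sort}(\alpha)=\lambda} M_\alpha$. This reads off $c^{\Gamma}_\alpha(q)=a_1(\mathrm{sort}(\alpha))\mult{\widetilde{\mathrm{sort}(\alpha)}}(q+1)^{n-1}$. As a check, this is consistent with Proposition~\ref{prop: CSF coeffs in orientation total}, since the leading coefficient $[q^{n-1}]$ equals $c_{\mathrm{sort}(\alpha)}$.

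There is essentially no obstacle. The only points to verify are the edge count $m=n-1$ and that the earlier theorem's hypothesis (being a tree) holds, both of which are immediate from the definition of $\St_n$.
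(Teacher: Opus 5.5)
Your proposal is correct and is exactly the paper's argument: the corollary is obtained by applying Theorem~\ref{thm: TSto on trees} to $\St_n$ (a tree with $n-1$ edges) and substituting the $m$-expansion of $\st_n$ from Theorem~\ref{thm: starCSF}. Your reading of $\Tchi^o$ as $\Tchi^{\Gamma}$ is also the intended one.
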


\subsection{Total CQF via labelings}\label{subsec: total star labelings}

As mentioned before, the CQF of the star graph essentially depends on the color of the root, which allows us to focus on the following normalization for the total labeling CQF. 
\begin{defn}\label{def: Tstar}
We define the \demph{normalized total labeling CQF of $\St_n$} as 
$$\Tst_n(q) := \sum_{\text{labelings of the root}} \chi_{\st_n^\rt}(q) = \frac{1}{(n-1)!}\Tchi^{\mathcal{L}}_{\St_n}(q).$$
\end{defn}

We want to analyze the $M$-expansion of $\Tst_n(q)$, and we start by looking at an example. Note that in this section, we denote the coefficients with the letter $b$, so that it is more explicit that these are not the same as the coefficients for the total labeling CQF of the star graph. 
\begin{example}\label{ex: TSt4(q)}
Consider $\St_n$ and consider the $M$-expansion of $\Tst_4(q)=\sum_\alpha b_\alpha(q) M_\alpha$. In Table~\ref{tab: calpha for St_4}, we show the coefficients $b_\alpha(q)$.

\begin{table}[H]
\centering
\begin{tabular}{ |c|c| } 
 \hline
 $\alpha$  & $b_\alpha(q)$\\ \hline
 $[1,1,1,1]$ & $16q^3 + 32q^2 + 32q + 16$\\ 
 $[1,1,2]$ & $4q^3 + 8q^2 + 8q + 4$\\ 
 $[1,2,1]$ & $6q^3 + 6q^2 + 6q + 6$\\
 $[1,3]$ & $q^3 + q^2 + q + 1$\\
 $[2,1,1]$ & $4q^3 + 8q^2 + 8q + 4$\\
 $[3,1]$ & $q^3 + q^2 + q + 1$\\
 \hline
\end{tabular}
\caption{Coefficients in the $M$-expansion of $\Tst_4(q)$.}
    \label{tab: calpha for St_4}
\end{table}

For comparison, in Table~\ref{tab: calphak for St_4}, we show the coefficients $c_\alpha^\rt(q)$ in the $M$-expansion of $\st_n^\rt(q)$, where we recall that $\rt$ corresponds to the color of the root.
\begin{table}[H]
    \centering
\begin{tabular}{ |c|c|c|c|c| } 
 \hline
 $\alpha$ & $c_\alpha^1(q)$ &  $c_\alpha^2(q)$ & $c_\alpha^3(q)$ &  $c_\alpha^4(q)$ \\ \hline
 $[1,1,1,1]$ & $6q^3 + 6q^2 + 6q + 6$ & $2q^3 + 10q^2 + 10q + 2$ & $2q^3 + 10q^2 + 10q + 2$ & $6q^3 + 6q^2 + 6q + 6$\\ 
 $[1,1,2]$ & $3q^3 + 3q^2$ & $q^3 + 3q^2 + 2q$ & $2q^2 + 3q + 1$ & $3q + 3$ \\ 
 $[1,2,1]$ & $3q^3 + 3$ & $3q^2 + 3q$ & $3q^2 + 3q$ & $3q^3 + 3$ \\
 $[1,3]$ & $q^3$ & $q^2$ & $q$ & $1$ \\
 $[2,1,1]$ & $3q + 3$ & $2q^2 + 3q + 1$ & $q^3 + 3q^2 + 2q$ & $3q^3 + 3q^2$ \\
 $[3,1]$ & 1 & $q$ & $q^2$ & $q^3$\\
 \hline
\end{tabular}
    \caption{Coefficients in the $M$-expansion of $\st^\rt_4(q)$ for $1\leq \rt \leq 4$.}
    \label{tab: calphak for St_4}
\end{table}

\end{example}

Before continuing, we introduce more notation associated with $\St_n^\rt$ and the normalized total labeling CQF, $\Tst_n(q)$. Firstly, for each possible labeling of the root, $1\leq \rt \leq \lfloor n/2 \rfloor$, we fix a \demph{representative labeling}  such that the label of the root is $\rt$, and we denote that labeling by $L_\rt$. For the other possible values of the labeling of the root, $\lfloor n/2 \rfloor< \rt' \leq n$, we write $\rt' = n+1-\rt$ and define $L_{\rt'}$ as the labeling $L_{\rt'}(v) = n+1-L_\rt(v)$, for any $v\in V(\St_n)$. We refer to $L_{\rt'}$ as the \demph{complementary labeling}. We denote by $\mathcal{K}_\alpha^\rt$ the set of colorings of $\St_n^\rt$ with associated composition $\alpha$ and by $\mathcal{K}_\alpha^\rt(i)$ the subset of colorings of $\mathcal{K}_\alpha^\rt$ such that $\kappa(\rt)=i$.
Now, consider the $M$-expansion of $\Tst_n(q)$, $\Tst_n(q)= \sum_\alpha b_\alpha(q)M_\alpha$. Then, we have that
$$
b_\alpha(q) = \sum_{\rt=1}^n c_\alpha^\rt(q) = \sum_i b_\alpha(i;q) = \sum_{i,\rt} b_\alpha^\rt(i;q),
$$
where the first sum runs over the possible labelings of the root and we recall that $c_\alpha^\rt(q)$ is independent of the concrete labeling $L$ as long as $L(\rt) = \rt$; the second sum runs over the $i$ such that $\alpha_i=1$, which corresponds to the color of the root $\kappa(\rt)$; and the third sum runs over the possible labelings and colorings of the root. That is, 
$$
c_\alpha^\rt(q)  = \sum_{\kappa \in \mathcal{K}^\rt_\alpha} q^{\asc^L(\kappa)} 
\qquad \text{ and } \qquad
b_\alpha^\rt(i;q) = \sum_{\kappa \in \mathcal{K}^\rt_\alpha(i)} q^{\asc^L(\kappa)}. 
$$

\begin{lemma}\label{lem: symmetry of B-coeff}
Let $\St_n$ be the star graph in $n$ vertices, and consider the $M$-expansion of its normalized total labeling CQF $\Tst_n(q)= \sum_\alpha b_\alpha(q)M_\alpha$, where $b_\alpha(q)= \sum_{i,\rt} b^\rt_\alpha(i;q)$. Then, 
$$
[q^k]b^\rt_\alpha(i;q) = [q^k]b^{n+1-\rt}_{\alpha^{\rev}}(\ell-i+1;q), \qquad \text{ where } \ell = \ell(\alpha).
$$
That is, $b_\alpha(i;q) = b_{\alpha^{\rev}}(\ell-i+1;q)$.

\end{lemma}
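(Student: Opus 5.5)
Our plan is to give a bijection between $\mathcal{K}^\rt_\alpha(i)$, with the representative labeling $L_\rt$, and $\mathcal{K}^{n+1-\rt}_{\alpha^{\rev}}(\ell-i+1)$, with the complementary labeling $L_{n+1-\rt}$, and to check that it preserves the number of ascents. Since $c^\rt_\alpha(q)$, and likewise $b^\rt_\alpha(i;q)$, does not depend on the concrete labeling as long as the root has label $\rt$, we may compute the right-hand side using $L_{\rt'}(v)=n+1-L_\rt(v)$ with $\rt'=n+1-\rt$. (For $\rt>\lfloor n/2\rfloor$ the roles of $\rt$ and $\rt'$ are simply exchanged, and the same argument applies.)

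Concretely, we send a coloring $\kappa\in\mathcal{K}^\rt_\alpha(i)$ to $\kappa^r(v)=\ell-\kappa(v)+1$, exactly as in the proof of Theorem~\ref{thm: reverse Labeling}. We then verify three things.
\begin{itemize}
\item The map $\kappa\mapsto\kappa^r$ is an involution on colorings with values in $\{1,\dots,\ell\}$. It preserves properness, since it is injective on colors.
\item Color $\ell-j+1$ is used by $\kappa^r$ exactly $\alpha_j$ times. So the associated composition of $\kappa^r$ is $\alpha^{\rev}$, and the root receives color $\ell-i+1$. Here $\alpha^{\rev}_{\ell-i+1}=\alpha_i=1$, as required.
\item For each edge $\{\rt,v\}$, both inequalities flip. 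We have $L_{\rt'}(\rt)<L_{\rt'}(v)$ if and only if $L_\rt(\rt)>L_\rt(v)$, and $\kappa^r(\rt)<\kappa^r(v)$ if and only if $\kappa(\rt)>\kappa(v)$.
\end{itemize}

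The third point deserves care, because it is the step where a naive reversal fails. An ascent in the paper's convention means that the endpoint with the smaller label has the smaller color. Reversing only the labels, or only the colors, would send $k$ ascents to $|E|-k=n-1-k$ ascents, as in Lemma~\ref{equality of alpha^r}. Reversing both restores the condition: the pair (smaller label, smaller color) at one endpoint becomes (larger label, larger color) at that same endpoint, so the other endpoint now carries both the smaller label and the smaller color. Hence $\asc^{L_{\rt'}}(\kappa^r)=\asc^{L_\rt}(\kappa)$. The map is a bijection because it is its own inverse, so the coefficients of $q^k$ agree.

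Summing this equality over all root labels $\rt$ then gives $b_\alpha(i;q)=b_{\alpha^{\rev}}(\ell-i+1;q)$. This works because $\rt\mapsto n+1-\rt$ is a bijection of $\{1,\dots,n\}$.

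The only genuine obstacle is bookkeeping. We must make sure the complementary labeling is a legitimate labeling with root label $n+1-\rt$, so that the independence of $b^\rt_\alpha$ from the representative applies. We must also keep the degree-preserving statement separate from the degree-reversing one in Lemma~\ref{equality of alpha^r}, since that lemma reverses only one of the two data.
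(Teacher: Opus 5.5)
Your proposal is correct and follows essentially the same route as the paper: the color reversal $\kappa'(v)=\ell-\kappa(v)+1$ paired with the switch from the representative labeling to its complementary labeling, so that each edge is an ascent before if and only if it is one after. You also supply the edge-by-edge verification, the independence from the labeling, and the summation over root labels, which the paper leaves implicit.
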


\begin{proof}
Note that $[q^k]b^\rt_\alpha(i;q)$ counts the number of colorings $\kappa \in \mathcal{K}^\rt_\alpha(i)$ such that $\asc^L(\kappa)=k$ and that $[q^k]b^{n+1-\rt}_{\alpha^{\rev}}(\ell-i+1;q)$ counts the number of colorings $\kappa' \in \mathcal{K}^{n+1-\rt}_{\alpha^{\rev}}(\ell-i+1)$ such that $\asc^L(\kappa')=k$. 

Consider the following map:
$$
\begin{array}{cccc} 
\varphi: & \left\{ \kappa \in \mathcal{K}^\rt_\alpha(i)\ |\ \asc^L(\kappa)=k\right\} 
& \longrightarrow & 
\left\{ \kappa' \in \mathcal{K}^{n+1-\rt}_{\alpha^{\rev}}(\ell+1-i)\ |\ \asc^L(\kappa')=k\right\} \\
& \kappa &\longmapsto & \varphi(\kappa):= \kappa',
\end{array}$$ 
where $\kappa'$ is the coloring $\kappa'(v) = \ell - \kappa(v)+1$, for all $v\in \St_n$.

By definition of $\varphi$, $\kappa'$ is a coloring in $\mathcal{K}^{n+1-\rt}_{\alpha^{\rev}}(\ell+1-i)$. 
Moreover, it is easy to see that taking the representative labeling and its complementary labeling, the edges that yield an ascent in the domain of $\varphi$ are exactly the edges that yield an ascent in its image, and so the number of ascents is preserved and $\varphi$ is a well-defined map. The fact that $\varphi$ is a bijection follows mostly by definition. 
\end{proof}

Now we are ready to state the formula for the $M$-expansion of the normalized total labeling CQF of $\St_n$.
\begin{thm}
Let $\St_n$ be the star graph on $n$ vertices, and consider the $M$-expansion of its normalized total labeling CQF, $\Tst_n(q) = \sum_{\alpha}b_\alpha(q)M_\alpha$. Then, 
$$b_\alpha(q) = \sum_{i} \mult{\rcomp{\alpha}{i}}\mult{\lcomp{\alpha}{i}} \sum_{l=0}^{s}\binom{n}{l}q^l[n-2l]_q,$$
where the first sum runs over all the $i$ such that $\alpha_i=1$, and $s=  \min\{R_{\alpha_i},L_{\alpha_i}\}$.
Equivalently, 
$$[q^k]b_\alpha(i;q) = \mult{\rcomp{\alpha}{i}}\mult{\lcomp{\alpha}{i}}\sum_{l=0}^{s_0}\binom{n}{l},$$
where $s_0 = \min\{k,s\}$ and $0\leq k\leq n-1$.
\end{thm}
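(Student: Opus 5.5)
The plan is to fix the color $i$ of the root and work one summand $b_\alpha(i;q)=\sum_{\rt}b^\rt_\alpha(i;q)$ at a time; summing over $i$ with $\alpha_i=1$ then gives the first formula. The two formulations are equivalent. Indeed, $\sum_{l=0}^{s}\binom{n}{l}q^l[n-2l]_q$ has coefficient of $q^k$ equal to $\sum_{l\le s,\ l\le k\le n-1-l}\binom nl$. So it is enough to prove the coefficientwise statement. By the palindromicity of Proposition~\ref{prop: palindrom Label} (or by Lemma~\ref{lem: symmetry of B-coeff} combined with Lemma~\ref{equality of alpha^r}), it suffices to treat $k\le (n-1)/2$, where the condition $k\le n-1-l$ is automatic. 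Write $L=L_{\alpha_i}$ and $R=R_{\alpha_i}$, so that $L+R=n-1$. The factor $\mult{\rcomp{\alpha}{i}}\mult{\lcomp{\alpha}{i}}$ splits off exactly as in Proposition~\ref{prop: star QSF decomp}. This reduces the problem to counting pairs $(\rt,S)$, where $\rt\in\{1,\dots,n\}$ is the root label and $S$ is the $L$-subset of non-root vertices receiving colors smaller than $i$.

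For fixed $\rt$, let $a$ be the number of vertices of $S$ with label less than $\rt$. An edge is an ascent exactly when the leaf has a smaller label and a smaller color, or a larger label and a larger color. Hence $\asc=a+\bigl((n-\rt)-(L-a)\bigr)=n-\rt-L+2a$, which agrees with Lemma~\ref{lem: number of ascents}. The number of $S$ with a given $a$ is $\binom{\rt-1}{a}\binom{n-\rt}{L-a}$. Imposing $\asc=k$ determines $\rt=n-L+2a-k$. Therefore
$$[q^k]b_\alpha(i;q)=\mult{\rcomp{\alpha}{i}}\mult{\lcomp{\alpha}{i}}\sum_{a\ge0}\binom{n-L-k+2a-1}{a}\binom{L+k-2a}{L-a}.$$
Using the symmetry $\alpha\mapsto\alpha^{\rev}$ of Lemma~\ref{lem: symmetry of B-coeff}, which swaps $L$ and $R$, together with the complementary labeling, I would arrange the roles so that the relevant parameter is $s=\min\{L,R\}$. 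I would then substitute $j$ for $a$ (or for $L-a$) and rewrite each binomial via $\binom{m}{p}=\binom{m}{m-p}$. The sum should then take exactly the form $\sum_{j\ge0}\binom{s+k-2j}{s-j}\binom{n-1-s-k+2j}{n-1-s-k+j}$ from the introduction, and when $k<s$ the same computation should apply with $s$ replaced by $s_0=\min\{k,s\}$. This bookkeeping, which includes checking that the summation ranges match the bounds $0\le j\le\min\{L^{n-\rt},R^{\rt-1}\}$ of Lemma~\ref{lem: number of ascents}, is routine but error-prone, and I would verify it against Table~\ref{tab: calpha for St_4}.

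The statement then follows from the binomial identity $\sum_j\binom{s+k-2j}{s-j}\binom{n-1-s-k+2j}{n-1-s-k+j}=\sum_{l=0}^{s}\binom nl$, for $s\le k$, which is proved in Section~\ref{sec: binomial identity}. I expect this identity to be the main obstacle, because the left side visibly depends on $k$ while the right side does not. My first attempt would be a lattice-path model. The term indexed by $j$ counts words of length $n$ consisting of $n-1$ letters of two types plus a marked root position, and I would look for a bijection with subsets of $[n]$ of size at most $s$, for instance via a reflection or cycle-lemma argument that locates the root at the first position where a suitable running count attains its extremum. If that fails, I would instead show that the left side is independent of $k$ (by comparing $k$ and $k+1$ through Pascal's rule) and then evaluate it at $k=s$.
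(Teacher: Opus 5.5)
Your proposal is correct and follows the paper's route. You recover the intermediate sum of Theorem~\ref{thm: first step c_alpha} and then invoke the binomial identity of Section~\ref{sec: binomial identity}, exactly as the paper does. Your substitution $\rt=n-L+2a-k$ with $L=s$ is the paper's $\rt=\rt_0+2j$, but it avoids the paper's case split between $p=j$ and $p=k-j$.

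One caveat: your own coefficient extraction shows the coefficientwise form is equivalent to the first formula only for $k\le\lfloor\frac{n-1}{2}\rfloor$. In general the upper limit must be $\min\{s,k,n-1-k\}$; for instance, for $n=4$ one has $[q^3]b_{(1,1,1,1)}(2;q)=2$, not $10$. So your palindromic reduction proves the first formula, and the second one only on that range.
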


This result is a consequence of the two results we state below. The first result is another interpretation of the coefficients $b_\alpha(q)$, which is proved in Section~\ref{subs: proof of 1st step}. 

\begin{thm}\label{thm: first step c_alpha}
Let $\St_n$ be the star graph on $n$ vertices, and consider the $M$-expansion of its normalized total labeling CQF, $\Tst_n(q) = \sum_{\alpha}b_\alpha(q)M_\alpha$.
For $k \leq \lfloor \frac{n-1}{2}\rfloor$, 
$$
[q^k]b_\alpha(q) = \sum_{i}
\mult{\rcomp{\alpha}{i}}\mult{\lcomp{\alpha}{i}}
\sum_{j=0}^{s_0}\binom{s + k -2j}{s-j}\binom{n-1-s - k +2j}{j},
$$
where the first sum runs over all the $i$ such that $\alpha_i=1$, $s = \min\{R_{\alpha_i}, L_{\alpha_i}\}$, and $s_0 = \min\{s,k\}$. 
Equivalently, 
$$[q^k]b_\alpha(i;q) = \mult{\rcomp{\alpha}{i}}\mult{\lcomp{\alpha}{i}} \sum_{j=0}^{s_0}\binom{s + k -2j}{s-j}\binom{n-1-s - k +2j}{j}.$$
\end{thm}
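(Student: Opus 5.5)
Fix $i$ with $\alpha_i=1$ and write $L=L_{\alpha_i}$, $R=R_{\alpha_i}$, so $L+R=n-1$. Colorings in $\bigcup_\rt \mathcal{K}_\alpha^\rt(i)$ factor as a choice of which $L$ leaves get colors below $i$ and which $R$ leaves get colors above $i$, followed by the precise colors. The second choice contributes the factor $\mult{\rcomp{\alpha}{i}}\mult{\lcomp{\alpha}{i}}$ and does not affect ascents. It therefore suffices to show that, for $k\le\lfloor\frac{n-1}{2}\rfloor$,
$$\sum_{\rt=1}^n \#\{(S,\rt)\}=\sum_{j=0}^{s_0}\binom{s+k-2j}{s-j}\binom{n-1-s-k+2j}{j}.$$
Here the left side counts choices of a root label $\rt$ together with a set $S$ of $L$ leaves (the ``low'' leaves) out of the $n-1$ leaves, where the leaves carry labels $\{1,\dots,n\}\setminus\{\rt\}$, such that exactly $k$ ascents occur.

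The count is already given by Proposition~\ref{prop: star QSF decomp}:
$$[q^k]b_\alpha^\rt(i;q)/\bigl(\mult{\rcomp{\alpha}{i}}\mult{\lcomp{\alpha}{i}}\bigr)=\sum_j \binom{n-\rt}{L^{n-\rt}-j}\binom{\rt-1}{R^{\rt-1}-j}$$
over the $j$ with $|n-\rt-L|+2j=k$. By Lemma~\ref{lem: symmetry of B-coeff} (together with $\alpha\leftrightarrow\alpha^{\rev}$ swapping $L,R$), we may assume $s=L\le R$; the formula is symmetric in the roles via $\rt\mapsto n+1-\rt$. Ascents are leaves $v$ with ($L(v)<\rt$ and low) or ($L(v)>\rt$ and high). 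I will reparametrize by $t=n-\rt-L$, so that $k=|t|+2j$.

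Rather than summing over $\rt$ directly, I would read the identity combinatorially. Linearly order all $n$ labels and encode the configuration as a word of length $n$ in the letters low, high, and root. The ascent condition says $k$ counts low letters before the root plus high letters after it. The right-hand side suggests splitting the word at a distinguished position and reorganizing it so that the $\binom{s+k-2j}{s-j}$ factor chooses low positions among an initial segment of length $s+k-2j$, while $\binom{n-1-s-k+2j}{j}$ chooses $j$ special positions in the complementary segment. Concretely, $j$ should count ``crossing'' pairs, namely low letters after the root matched with high letters before it. The plan is to find a bijection that, for fixed $j$, moves the root letter to the position $s+k-2j$ (a canonical cut point determined by $k$ and $j$). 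The hypothesis $k\le\lfloor\frac{n-1}{2}\rfloor$ should guarantee that the cut point lies in range, so that no boundary truncation of the binomials occurs.

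If the bijection proves elusive, I fall back on algebra. For fixed $j$, collect all $\rt$ with $|n-\rt-L|=k-2j'$ for the appropriate $j'$; there are two values of $\rt$ for each $k-2j'>0$ and one when it equals $0$. Their sum is then $\sum \binom{a}{b}\binom{c}{d}$ over two roots, and I would match it with Vandermonde-type convolutions to the stated summand. This is the step I expect to be the main obstacle: the two root positions $\rt=n-L\pm(k-2j)$ contribute products of binomials with different truncations ($L^{n-\rt}$ versus $L$), and regrouping them termwise into $\binom{s+k-2j}{s-j}\binom{n-1-s-k+2j}{j}$ likely requires a Pascal-rule telescoping in $j$ rather than a term-by-term identity. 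I would first verify the identity on Table~\ref{tab: calphak for St_4} ($n=4$, $k\le1$) to calibrate the index shifts.
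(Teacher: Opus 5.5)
Your reduction is sound and matches the paper's opening: fix $i$ with $\alpha_i=1$, factor out $\mult{\rcomp{\alpha}{i}}\mult{\lcomp{\alpha}{i}}$, take the per-root counts from Proposition~\ref{prop: star QSF decomp}, and use Lemma~\ref{lem: symmetry of B-coeff} to assume $s=L_{\alpha_i}\le R_{\alpha_i}$. But the proposal stops where the proof actually begins. Neither the bijection nor the algebraic fallback is carried out, and the fallback targets the wrong structure. You plan to pair the two root labels $\rt=n-L_{\alpha_i}\pm(k-2j)$ that share the same $|n-\rt-L_{\alpha_i}|$, and to match their combined contribution to ``the stated summand'' via Vandermonde or a Pascal telescoping. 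That matching cannot work. These two labels contribute two \emph{different} summands of the right-hand side, namely those with indices $j$ and $k-j$. Moreover, the second label is not even in $\{1,\dots,n\}$ when $k-2j>s$.

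The missing observation is that the identity holds root label by root label, with no regrouping at all. In your own word encoding, let $x$ be the number of low letters before the root. Then there are $s-x$ low letters after it and $(n-\rt)-(s-x)$ high letters after it, so $\asc=(n-\rt-s)+2x$. Hence, for fixed $k$, the root position determines $x$. Setting $x=j$ forces $\rt=n-s-k+2j$, so the root has exactly $s+k-2j$ letters after it and $n-1-s-k+2j$ before it. The number of admissible low sets is then $\binom{n-1-s-k+2j}{j}\binom{s+k-2j}{s-j}$, which is precisely the $j$-th summand. The hypothesis $k\le\lfloor\frac{n-1}{2}\rfloor$, together with $s\le\frac{n-1}{2}$, gives $\rt\ge1$ for all $j\ge0$. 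For $j>s_0$, the label is either out of range or $\binom{s+k-2j}{s-j}=0$.

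So $j$ is simply the number of low leaves labeled below the root, not a count of ``crossing pairs'', and the root never needs to be moved. The paper reaches the same term-by-term match through Proposition~\ref{prop: star QSF decomp}. It indexes the contributing labels as $\rt=R_{\alpha_i}-k+1+2j$ and takes $p=j$ when $2j\le k$ and $p=k-j$ when $2j>k$. In the latter case it uses $\binom{s+k-2j}{k-j}=\binom{s+k-2j}{s-j}$ and $\binom{n-1-s-k+2j}{R_{\alpha_i}-k+j}=\binom{n-1-s-k+2j}{j}$.
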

The second result is a binomial identity, which is more challenging to prove than we expected. In Section~\ref{sec: binomial identity} we present a combinatorial model for it together with its proof. 
\begin{thm}\label{prop: binomial identity}
Let $n$, $k$, and $s$ be non-negative integers with $k,s\leq \left\lfloor \frac{n-1}{2} \right\rfloor$, and $s_0 = \min\{s,k\}$. Then,
$$
\sum_{j=0}^{s_0} \binom{s + k -2j}{s-j}\binom{n-1-s - k +2j}{n-1-s-k +j} = \sum_{l = 0}^{s_0}\binom{n}{l}.
$$
\end{thm}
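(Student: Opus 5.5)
We first reduce to a single parameter. Since $\binom{s+k-2j}{s-j}=\binom{s+k-2j}{k-j}$ and $\binom{n-1-s-k+2j}{n-1-s-k+j}=\binom{n-1-s-k+2j}{j}$, the left-hand side, which we call $F_n(s,k)$, is symmetric in $s$ and $k$. So we may assume $s\le k$, and then $s_0=s$. Set $a=n-1-s-k$. The hypothesis $k,s\le\lfloor (n-1)/2\rfloor$ gives $a\ge 0$, so every binomial coefficient in the sum is an honest count with nonnegative top entry. The proof then has two steps.
\begin{enumerate}
\item Show that $F_n(s,k)$ does not depend on $k$ in the range $s\le k\le\lfloor (n-1)/2\rfloor$.
\item Evaluate $F_n(s,k)$ at one convenient value of $k$.
\end{enumerate}

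For the independence step, I would give a lattice-path model. Read $\binom{s+k-2j}{s-j}$ as the number of north/east paths from $(j,j)$ to $(s,k)$. Read $\binom{a+2j}{j}$ as the number of words of length $a+2j$ with exactly $j$ letters of a marked type. Concatenating the two, each term of the sum becomes a walk of total length $s+k-2j+a+2j=n-1$. Then $j$ is a statistic on binary words of length $n-1$, defined by a first-passage or last-visit decomposition relative to the diagonal, for example the last time the path touches a line of slope one.

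With this model, I would build a bijection between the objects counted by $F_n(s,k)$ and those counted by $F_n(s,k+1)$. Incrementing $k$ moves one step from the second factor (since $a$ drops by one) into the first factor (since the endpoint moves from $(s,k)$ to $(s,k+1)$). The natural candidate is a reflection (cycle-lemma style) map that transfers the first step after the last diagonal touch from one segment to the other. If a clean bijection is elusive, I would instead prove $F_n(s,k+1)-F_n(s,k)=0$ algebraically. Expanding both differences via Pascal's rule, $\binom{s+k+1-2j}{s-j}-\binom{s+k-2j}{s-j}=\binom{s+k-2j}{s-j-1}$ and $\binom{a+2j}{j}-\binom{a-1+2j}{j}=\binom{a-1+2j}{j-1}$. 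The two resulting sums should cancel after shifting $j\mapsto j+1$ in one of them. This telescoping is the step I expect to be the main obstacle: the index shift changes the top entries by $2$ rather than $1$, so an extra Pascal application and possibly an inductive hypothesis in $n$ will be needed.

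Once independence is known, I would evaluate at the extreme $k=\lfloor (n-1)/2\rfloor$, where $a\in\{0,1\}$. For $a=0$ the sum is $\sum_j\binom{s+k-2j}{s-j}\binom{2j}{j}$. I would then induct on $s$, aiming to show $F_n(s,k)-F_n(s-1,k)=\binom{n}{s}$. Here I would use the generating functions $\sum_j\binom{2j}{j}x^j=(1-4x)^{-1/2}$ and $\sum_m\binom{2m+c}{m}x^m=(1-4x)^{-1/2}\bigl(\tfrac{1-\sqrt{1-4x}}{2x}\bigr)^c$. The convolution then reduces to an identity of the form $\sum_{l\le s}\binom{2m+1}{l}$, which is the classical partial-row sum of central type. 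The case $a=1$ is analogous.

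As a cross-check on the whole argument, the combinatorial interpretation of the right-hand side is subsets of $[n]$ of size at most $s$. The final bijection should send such a subset to a pair (first-factor path, second-factor word), with $j$ recording how many chosen elements lie in a designated block. This is the model I would develop fully if the algebraic telescoping in the independence step resists.
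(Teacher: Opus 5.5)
Your skeleton (reduce to $s\le k$, prove independence of $k$, evaluate at one value of $k$) parallels the paper's, but as written both substantive steps have a gap. In the independence step, the two sums left after Pascal's rule do not cancel termwise after shifting $j\mapsto j+1$. With $a=n-1-s-k$ they are
\begin{gather*}
\sum_{j}\binom{(s-1)+(k+1)-2j}{(s-1)-j}\binom{(a-1)+2j}{j}=F_{n-1}(s-1,k+1),\\
\sum_{j}\binom{(s-1)+(k-1)-2j}{(s-1)-j}\binom{(a+1)+2j}{j}=F_{n-1}(s-1,k-1),
\end{gather*}
so $F_n(s,k+1)-F_n(s,k)=F_{n-1}(s-1,k+1)-F_{n-1}(s-1,k-1)$. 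This closes by induction on $s$, with base case $F_n(0,k)=1$. However, it does so only if the inductive statement is ``$F_n(s,k)$ is constant in $k$ for $s\le k$, $s+k\le n-1$.'' The recursion leaves the theorem's range: comparing $F_7(2,2)$ with $F_7(2,3)$ requires $F_6(1,3)$, and $3>\lfloor 5/2\rfloor$. In the evaluation step, your claim that $a\in\{0,1\}$ at $k=\lfloor (n-1)/2\rfloor$ is false. There $a=\lceil (n-1)/2\rceil-s$; for example, $n=7$, $s=1$ gives $a=2$. Getting $a=0$ requires $k=n-1-s$, which lies outside the theorem's range unless $s=k=(n-1)/2$. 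So that choice is legitimate only through the extended-range independence just described.

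Both issues disappear if you use your generating functions from the start. With $m=s-j$ you have $\binom{s+k-2j}{s-j}=\binom{(k-s)+2m}{m}$, so the left side is a convolution of two series of exactly the form you quote:
\[
F_n(s,k)=[x^s]\,\frac{C(x)^{k-s}}{\sqrt{1-4x}}\cdot\frac{C(x)^{a}}{\sqrt{1-4x}}=[x^s]\,\frac{C(x)^{n-1-2s}}{1-4x},\qquad C(x)=\frac{1-\sqrt{1-4x}}{2x}.
\]
This is visibly independent of $k$. Substitute $x=t/(1+t)^2$, so that $C=1+t$ and $\sqrt{1-4x}=(1-t)/(1+t)$, and apply Lagrange inversion to get $[t^s]\,(1+t)^n/(1-t)=\sum_{l=0}^{s}\binom{n}{l}$. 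This is far shorter than the paper's argument and holds on the larger range $s\le k$, $s+k\le n-1$. What the paper's route buys instead is a bijective explanation: independence of $k$ appears as independence of $b_0=k-s+1$ via the flipped-block map. It also yields the refinement $K_{s+1-l}=\binom{n}{l}$ through $T(n,s,i,b_0)=\binom{n}{i}-\binom{n}{i-1}$, which the generating-function proof does not provide.
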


\begin{remark} 
Note that by Proposition~\ref{prop: palindrom Label}, we know that the coefficients $b_\alpha(q)$ are symmetric, and so in Theorems~\ref{thm: first step c_alpha} and~\ref{prop: binomial identity} we only need to consider $0\leq k \leq \left\lfloor \frac{n-1}{2} \right\rfloor$. Moreover, $s$ is exactly the minimum between the number of entries to the left and right of the entry $\alpha_i=1$ that we consider, and so, in Theorem~\ref{prop: binomial identity}, we also have that $s\leq \left\lfloor \frac{n-1}{2} \right\rfloor$. 
\end{remark}

\subsection{Proof of Theorem~\ref{thm: first step c_alpha}}\label{subs: proof of 1st step}

\begin{proof}
By Definition~\ref{def: Tstar} and Proposition~\ref{prop: star QSF decomp}, we have that
\begin{align} 
b_\alpha(q) &= \sum_{\rt=1}^n c_\alpha^\rt(q) \nonumber \\
            &= \sum_{\rt=1}^n \sum_{i : \alpha_i = 1}\sum_{p = 0}^{min\{L_{\alpha_i}^{n-\rt}, R_{\alpha_i}^{\rt-1}\}} \mult{\rcomp{\alpha}{i}}\mult{\lcomp{\alpha}{i}}\binom{n-\rt}{L_{\alpha_i}^{n-\rt}-p}\binom{\rt-1}{R_{\alpha_i}^{\rt-1}-p}q^{2p + |n-\rt-L_{\alpha_i}|} \nonumber \\
            & = \sum_{i : \alpha_i = 1}\sum_{\rt=1}^n \sum_{p = 0}^{min\{L_{\alpha_i}^{n-\rt}, R_{\alpha_i}^{\rt-1}\}} \mult{\rcomp{\alpha}{i}}\mult{\lcomp{\alpha}{i}}\binom{n-\rt}{L_{\alpha_i}^{n-\rt}-p}\binom{j-1}{R_{\alpha_i}^{\rt-1}-p}q^{2p + |n-\rt-L_{\alpha_i}|}. \label{eq: old decomp in proof 3.8} 
\end{align}

Let us fix $i$ such that $\alpha_i=1$, that is, let us fix the color of the root $\kappa(\rt)=i$, for $1\leq r \leq n$. 
By Lemma~\ref{lem: symmetry of B-coeff}, we can assume that $L_{\alpha_i}\leq R_{\alpha_i}$, so that $s=\min\{R_{\alpha_i}, L_{\alpha_i}\} = L_{\alpha_i}$.

Suppose we fix $k \leq \lfloor \frac{n-1}{2}\rfloor$. Then, we have that $k=2p + |n-r-L_{\alpha_i}|$ for the different values of $r$ and $p$. We claim that the smallest labeling of the root of $\St_n$, that is $r$, that contributes a nonzero amount to the coefficient of $q^k$ corresponds to taking $p=0$ and $\rt_0=R_{\alpha_i}-k+1$. 
To see this, we first assume that $p=0$. Then, recall that $n = R_{\alpha_i}+L_{\alpha_i}+1$, and so $k = |R_{\alpha_i}+L_{\alpha_i}+1-\rt-L_{\alpha_i}| = |R_{\alpha_i}+1-\rt|$. Thus, $\rt_0 = \min_\rt \{ R_{\alpha_i}-k+1, R_{\alpha_i}+k-1\} = R_{\alpha_i}-k+1$ as desired. Now, assume that $p>0$ and suppose that there exists some $r_1 = \rt_0 -\varepsilon$, with $\varepsilon>0$. Then, we have that $k=2p + |n-r-L_{\alpha_i}|= k=2p + |R_{\alpha_i} - r_1 +1| = 2p + |k +\varepsilon| > k$, which is a contradiction.  

In fact, we can write $\rt=\rt_0+2j$, with $0 \leq j \leq \lfloor \frac{n-\rt_0}{2}\rfloor$.
The parity of $k=2p + |R_{\alpha_i}+1-\rt|$ depends on the parity of $\rt$, and so the only nonzero contributions correspond to the labeling of the form $\rt = \rt_0 +2j$, with $0\leq j \leq \lfloor \frac{n-\rt_0}{2}\rfloor$.

Therefore, in Equation~\eqref{eq: old decomp in proof 3.8}, we have that the coefficient of $q^k$ in $b_\alpha(q)$ is
\begin{align} 
[q^k] \left(\sum_{i : \alpha_i = 1}\sum_{j=0}^{\lfloor \frac{n-\rt_0}{2}\rfloor} \sum_{p = 0}^{min\{L_{\alpha_i}^{n-\rt_0-2j}, R_{\alpha_i}^{\rt_0+2j-1}\}} \mult{\rcomp{\alpha}{i}}\mult{\lcomp{\alpha}{i}}\binom{n-\rt_0-2j}{L_{\alpha_i}^{n-\rt_0-2j}-p}\binom{\rt_0+2j-1}{R_{\alpha_i}^{\rt_0+2j-1}-p}q^{2p + |k-2j|} \right),\label{eq: coeff with l summation} 
\end{align}
where in the exponent of $q$ we use that $|n-\rt_0-2j-L_{\alpha_i}| = |R_{\alpha_i}-\rt_0-2j+1| = |k-2j|$.

Our next step is to calculate the coefficient for the labels $\rt_0+2j$ of the root. Now, we first want to show that this contribution is nonzero exactly when $0 \leq j \leq s_0 = \min \{s, k\}$. Note that the condition $j \leq k$ comes from the exponent of $q$ while the condition $j \leq s$ comes from the binomial coefficients. 

Suppose that $j > k$. Then $|k - 2j| > k$ and so, the exponent of $q$ is $2p + |k - 2j|>k$. That is, it does not contribute to the coefficient of $q^k$ that we are looking at. Thus, we have to $j\leq k$. 
Moreover, note that if $j$ is fixed, then $2p + |k-2j| = k$ has only one solution in terms of $p$. In fact, $p$ takes the following values:
\begin{itemize}
    \item[(a)] If $2j \leq k$, then $p=j$. This follows since $2p + |k-2j| = 2p+k-2j$, and so $2p+k-2j = k$ implies that $p=j$.

    \item[(b)] If $2j \geq k$, then $p=k-j$. This follows since $2p + |k-2j| = 2p-k+2j$, and so $2p-k+2j = k$ implies that $p=k-j$.
\end{itemize}

Now we want to show the following claim:
$$p\leq \min\{L_{\alpha_i}^{n-\rt_0-2j},R_{\alpha_i}^{\rt_0+2j-1}\} \qquad \text{ if and only if } \qquad j\leq s .$$
Firstly, we analyze $\min\{L_{\alpha_i}^{n-\rt_0-2j},R_{\alpha_i}^{\rt_0+2j-1}\}$ depending on the values of $p$. Note that, since $L_{\alpha_i}\leq R_{\alpha_i}$ and $L_{\alpha_i}= n - \rt_0 - k$, we have that 
$$\min\{L_{\alpha_i}^{n-\rt_0-2j},R_{\alpha_i}^{\rt_0+2j-1}\} = 
\min\{n-\rt_0-2j, n - \rt_0 - k, \rt_0+2j-1\}.$$
Then, 
\begin{enumerate}
    \item[(1)] $\min\{L_{\alpha_i}^{n-\rt_0-2j},R_{\alpha_i}^{\rt_0+2j-1} \}= n-\rt_0-2j$ implies that $p=k-j$;
    \item[(2)] $\min\{L_{\alpha_i}^{n-\rt_0-2j},R_{\alpha_i}^{\rt_0+2j-1}\}  = n - \rt_0 - k$ implies that $p=j$; and 
    \item[(3)] $\min\{L_{\alpha_i}^{n-\rt_0-2j},R_{\alpha_i}^{\rt_0+2j-1}\} = \rt_0+2j-1$ implies that $p=j$; 
\end{enumerate}
Assume $p\leq \min\{L_{\alpha_i}^{n-\rt_0-2j},R_{\alpha_i}^{\rt_0+2j-1}\}$. Then, analyzing the possible cases above, it is straightforward to conclude that $j\leq s$. For the other direction, we assume that $j\leq s$ and use the values of $p$ in cases $(a)$ and $(b)$ to conclude that $p\leq \min\{L_{\alpha_i}^{n-\rt_0-2j},R_{\alpha_i}^{\rt_0+2j-1}\}$.

Thus, continuing from Equation~\eqref{eq: coeff with l summation}, we have that the coefficient of $q^k$ in $b_\alpha(q)$ is
\begin{align} 
[q^k] \left(\sum_{i : \alpha_i = 1}\sum_{j=0}^{s_0} \sum_{p = 0}^{min\{L_{\alpha_i}^{n-\rt_0-2j}, R_{\alpha_i}^{\rt_0+2j-1}\}} \mult{\rcomp{\alpha}{i}}\mult{\lcomp{\alpha}{i}}\binom{n-\rt_0-2l}{L_{\alpha_i}^{n-\rt_0-2j}-p}\binom{\rt_0+2j-1}{R_{\alpha_i}^{\rt_0+2j-1}-p}q^{2p + |k-2j|} \right), \label{eq: coeff with final l} 
\end{align}

Finally, we are ready to look at the coefficients and rewrite them in terms of our parameters. First, we notice the following:
\begin{align*}
    L_{\alpha_i}^{n-\rt_0-2j} &= \min \left\{ L_{\alpha_i}, n-\rt_0-2j\right\} = \min \left\{ L_{\alpha_i}, L_{\alpha_i}-k+2j \right\} = \min \left\{ s, s-k+2j \right\} , \\
    R_{\alpha_i}^{\rt_0+2j-1} &= \min \left\{ R_{\alpha_i}, \rt_0+2j-1 \right\} = \min \left\{ n-s-1, n-s-1-k+2j  \right\},
\end{align*}
where we use that $\rt_0 = R_{\alpha_i}-k+1$, $L_{\alpha_i} +k = n-\rt_0$, $L_{\alpha_i}+R_{\alpha_i}+1=n$, and our assumption that $L_{\alpha_i}\leq R_{\alpha_i}$ and so $s=\min \{L_{\alpha_i},R_{\alpha_i}\}=L_{\alpha_i}$. 
Then, following the possible values of $p$, we have that 
\begin{itemize}
    \item[(a)] If $2j \leq k$, then $p=j$, and we have that 
    $$
    \binom{n-\rt_0-2j}{L_{\alpha_i}^{n-\rt_0-2j}-p}\binom{\rt_0+2j-1}{R_{\alpha_i}^{\rt_0+2j-1}-p} = 
    \binom{s+k-2j}{s-j}\binom{n-s-k+2j-1}{n-s-1-k+2j-j}.
    $$
    \item[(b)] If $2j \geq k$, then $p=k-j$, and we have that 
    $$
    \binom{n-\rt_0-2j}{L_{\alpha_i}^{n-\rt_0-2j}-p}\binom{\rt_0+2j-1}{R_{\alpha_i}^{\rt_0+2j-1}-p} = 
    \binom{s+k-2j}{s+k-2j-(k-j)}\binom{n-s-k+2j-1}{n-s-1-(k-j)}.
    $$
\end{itemize}
In both cases, we have the simplification
$$
\binom{n-\rt_0-2j}{L_{\alpha_i}^{n-\rt_0-2j}-p}\binom{\rt_0+2j-1}{R_{\alpha_i}^{\rt_0+2j-1}-p} = \binom{s+k-2j}{s-j}\binom{n-s-k+2j-1}{j},
$$
and the result follows. 
\end{proof}

\section{The binomial identity and its combinatorial model}\label{sec: binomial identity}

In this section, we present a combinatorial proof of the following binomial identity.
\begin{thm}\label{thm:binomial_identity}
    Let $n$, $k$, and $s$ be nonnegative integers with $s\leq  k \leq \left\lfloor \frac{n-1}{2} \right\rfloor$. Then,
    \begin{equation}\label{eq:binomial_identity}
         \sum_{j=0}^{s} \binom{s + k -2j}{s-j}\binom{n-1-s - k +2j}{j} = \sum_{l = 0}^{s}\binom{n}{l}.
    \end{equation}
\end{thm}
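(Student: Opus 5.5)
The plan is to treat the left-hand side of \eqref{eq:binomial_identity} as a function $F(n,k,s)$. I will first show that it does not depend on $k$ in the range $s\le k\le \lfloor (n-1)/2\rfloor$, and then evaluate it at a convenient value of $k$. The right-hand side $\sum_{l=0}^{s}\binom{n}{l}$ equals $[y^s]\,\frac{(1+y)^n}{1-y}$, which makes the target suited to a coefficient-extraction (Egorychev-type) argument.

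\emph{Step 1: encode the sum as a coefficient.} Write $\binom{s+k-2j}{s-j}=[y^{s-j}](1+y)^{s+k-2j}$ and $\binom{n-1-s-k+2j}{j}=[x^{j}](1+x)^{n-1-s-k+2j}$. Then
$$F(n,k,s)=\sum_{j\ge 0}[y^{s}]\,y^{j}(1+y)^{s+k-2j}\;[x^{0}]\,x^{-j}(1+x)^{n-1-s-k+2j}.$$
The hypothesis $s\le k$ guarantees that $s+k-2j\ge 0$ and $k-j\ge 0$ for all $0\le j\le s$, so no spurious terms appear. Next I will identify the two formal variables through a substitution chosen so that the ratio $\frac{y(1+x)^2}{x(1+y)^2}$ collapses. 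The natural choice is $x=y$: the $j$-dependence then becomes $\bigl(\frac{y(1+y)^2}{y(1+y)^2}\bigr)^j$, which is trivial, and one is left with $(1+y)^{n-1}$. This cannot be done literally, because the extractions are in different variables. Instead I will write the product as a double residue and sum the geometric series in $j$ first. This gives a kernel of the form $1/\bigl(1-\frac{y(1+x)^2}{x(1+y)^2}\bigr)$, whose residue at $x=y$ is computed directly. The factor $(1-y)$ in the denominator of the right-hand side should come from this residue: $\frac{d}{dx}\bigl(x(1+y)^2-y(1+x)^2\bigr)\big|_{x=y}=(1+y)(1-y)$. The residue then gives $[y^s](1+y)^{n}/(1-y)$, which is the claim. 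In this computation $k$ cancels out, which is exactly the independence of $k$ mentioned in the introduction.

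\emph{Step 2 (fallback if the residue bookkeeping is messy): an induction in $k$ followed by a base case.} I will show $F(n,k+1,s)=F(n,k,s)$ by expanding both binomials with Pascal's rule, $\binom{s+k+1-2j}{s-j}=\binom{s+k-2j}{s-j}+\binom{s+k-2j}{s-j-1}$ and $\binom{m}{j}=\binom{m-1}{j}+\binom{m-1}{j-1}$, and then telescoping after the shift $j\mapsto j+1$. This reduces the problem to $k=s$, where the sum is $\sum_j\binom{2(s-j)}{s-j}\binom{n-1-2s+2j}{j}$. For this base case I will use induction on $s$: the difference between consecutive values of $s$ should be $\binom{n}{s}$, which I would check with the generating function $\sum_m\binom{2m}{m}z^m=(1-4z)^{-1/2}$.

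I expect the main obstacle to be the $k$-independence. Either the residue at $x=y$ must be justified rigorously, with attention to which poles lie inside the contour, or the telescoping in Step 2 must close up exactly at the boundary terms $j=0$ and $j=s$. I will try the residue argument first, since it explains why the answer is $[y^s](1+y)^n/(1-y)$. If the bookkeeping becomes unwieldy, I will switch to the Pascal-recursion induction, checking the small cases ($s=0$ gives $1$; $s=k=1$ gives $2+(n-1)=n+1$) to pin down the boundary terms.
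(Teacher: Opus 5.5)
Your Step 1 is correct and gives a complete proof. It takes a different route from the paper's.

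To make the residue rigorous, note that after summing the geometric series the $x$-kernel is
\[
\frac{(1+y)^2}{x(1+y)^2-y(1+x)^2}=\frac{(1+y)^2}{(x-y)(1-xy)},
\]
so the factor $1/x$ is absorbed. On contours $|y|=\epsilon$, $|x|=\rho<1$ with $\epsilon$ small, the only enclosed pole is the simple pole $x=y$; the other root of the denominator is $x=1/y$. Its residue is the one you computed, which leaves $[y^s](1+y)^n/(1-y)$.

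If you prefer a purely formal argument, this is Lagrange inversion. Take $\phi(x)=(1+x)^2$, $\psi(x)=(1+x)^{n-1-s-k}$ and $t=y/(1+y)^2$. Then $\sum_j t^j[x^j]\psi(x)\phi(x)^j=\psi(w)/(1-t\phi'(w))$, where $w=t\phi(w)$ forces $w=y$ and $1-t\phi'(y)=(1-y)/(1+y)$.

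The paper argues combinatorially instead. It reads the $j$th term as a count of configurations of $s$ marks in $n$ boxes satisfying a barrier ``$j$-condition'', and regroups by how many conditions a configuration satisfies. It then converts this into a ``not at home'' statistic. Explicit bijections remove the dependence on $k$ (through the offset $b_0=k-s+1$) and on $s$. Finally, recursions show the refined counts are ballot numbers $\binom{n}{i}-\binom{n}{i-1}$.

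Your argument is much shorter, and it makes the $k$-independence transparent: the factor $(1+y)^{s+k}$ simply recombines with $\psi(y)$. It also uses only $s\le k\le n-1-s$. The paper's route buys a combinatorial refinement: the number of configurations satisfying at least $s+1-l$ conditions is exactly $\binom{n}{l}$. The cost is a long chain of bijections and recursions, several with details left to the reader.

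One caution about your fallback. The Pascal expansion does not telescope on its own; it gives
\[
F(n,k+1,s)-F(n,k,s)=F(n-1,k+1,s-1)-F(n-1,k-1,s-1).
\]
So you need an induction on $s$, run on the range $s\le k$, $s+k\le n-1$. The range $k\le\lfloor (n-1)/2\rfloor$ is not enough, because $(n-1,k+1,s-1)$ can violate $k+1\le\lfloor (n-2)/2\rfloor$.
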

\begin{remark}
    Theorem~\ref{thm:binomial_identity} is slightly different than the statement in Theorem~\ref{prop: binomial identity} since we assume that $s\leq k \leq \left\lfloor \frac{n-1}{2} \right\rfloor$. Now, looking at the formula in Theorem~\ref{prop: binomial identity}, we note that the LHS is symmetric in $s$ and $k$, and the RHS does not depend on $s$ and $k$ other than the minimum $s_0=\min\{s,k\}$. Thus, we can assume that $s\leq k$, and so $s_0=s$. 
\end{remark}

In the following example, we observe that the terms on each side do not match directly. Also, in Equation~\eqref{eq:binomial_identity}, the LHS depends on $k$ while the RHS does not. 
\begin{example}\label{ex:running example}
    Let $s = k = 2$ and $n = 6$. Then, the LHS of Equation~\eqref{eq:binomial_identity} gives
    \begin{align*}
        \binom{4}{2}\binom{1}{1} + \binom{2}{1}\binom{3}{2} + \binom{0}{0}\binom{5}{3} = 6 + 6 + 10 = 22
    \end{align*}
    while the RHS gives 
    \begin{align*}
        \binom{6}{0} + \binom{6}{1} + \binom{6}{2} = 1 + 6 + 15 = 22.
    \end{align*}
\end{example}
We prove this binomial identity combinatorially using a model for which the computation on the LHS of Equation~\eqref{eq:binomial_identity} is direct, and we work our way to the RHS. 

\subsection{Combinatorial model}

Consider a row of $n$ boxes. We say that a box decorated with
\scalebox{0.35}{\begin{tikzpicture}
   \fill[DarkBlue!30] (0,0) rectangle (1,1);
   \draw[step=1cm,black,very thin] (0,0) grid (1,1);
\end{tikzpicture}} 
is a \demph{barrier} and that a box decorated with 
\scalebox{0.35}{\begin{tikzpicture}
   \draw[step=1cm,black,very thin] (0,0) grid (1,1);
   \node at (0.5,0.5) {\scalebox{3}{$\times$}};
\end{tikzpicture}} 
is a \demph{marked box}. We define a \demph{configuration} as a row of $n$ boxes with $s$ marked boxes. We encode this information as an increasing sequence of positive integers, $\gamma = (\gamma_1,\ldots,\gamma_s)$, where $\gamma_i\in [n]$ indicates the position of the $i^{\text{th}}$ marked box labeled from left to right. We say that a configuration satisfies the \demph{$j$-condition} if the box at position $s+k+1-2j$ can be decorated as a barrier and the configuration contains $s-j$ marked boxes to the left of the barrier, no marked boxes at the barrier, and $j$ marked boxes to the right of the barrier. 

\begin{example}
  In Figure~\ref{fig:all_configurations}, we include all 15 configurations that we are considering.
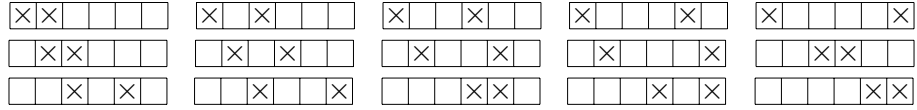
\begin{figure}[ht]
\centering
$\begin{array}{ccccc}
\scalebox{0.35}{\begin{tikzpicture}
   \draw[step=1cm,black,very thin] (0,0) grid (6,1);
   \node at (0.5,0.5) {\scalebox{3}{$\times$}};
   \node at (1.5,0.5) {\scalebox{3}{$\times$}};
\end{tikzpicture}} & 
\scalebox{0.35}{\begin{tikzpicture}
   \draw[step=1cm,black,very thin] (0,0) grid (6,1);
   \node at (0.5,0.5) {\scalebox{3}{$\times$}};
   \node at (2.5,0.5) {\scalebox{3}{$\times$}};
\end{tikzpicture}} &
\scalebox{0.35}{\begin{tikzpicture}
   \draw[step=1cm,black,very thin] (0,0) grid (6,1);
   \node at (0.5,0.5) {\scalebox{3}{$\times$}};
   \node at (3.5,0.5) {\scalebox{3}{$\times$}};
\end{tikzpicture}} &
\scalebox{0.35}{\begin{tikzpicture}
   \draw[step=1cm,black,very thin] (0,0) grid (6,1);
   \node at (0.5,0.5) {\scalebox{3}{$\times$}};
   \node at (4.5,0.5) {\scalebox{3}{$\times$}};
\end{tikzpicture}} &
\scalebox{0.35}{\begin{tikzpicture}
   \draw[step=1cm,black,very thin] (0,0) grid (6,1);
   \node at (0.5,0.5) {\scalebox{3}{$\times$}};
   \node at (5.5,0.5) {\scalebox{3}{$\times$}};
\end{tikzpicture}} \\
\scalebox{0.35}{\begin{tikzpicture}
   \draw[step=1cm,black,very thin] (0,0) grid (6,1);
   \node at (1.5,0.5) {\scalebox{3}{$\times$}};
   \node at (2.5,0.5) {\scalebox{3}{$\times$}};
\end{tikzpicture}} & 
\scalebox{0.35}{\begin{tikzpicture}
   \draw[step=1cm,black,very thin] (0,0) grid (6,1);
   \node at (1.5,0.5) {\scalebox{3}{$\times$}};
   \node at (3.5,0.5) {\scalebox{3}{$\times$}};
\end{tikzpicture}} &
\scalebox{0.35}{\begin{tikzpicture}
   \draw[step=1cm,black,very thin] (0,0) grid (6,1);
   \node at (1.5,0.5) {\scalebox{3}{$\times$}};
   \node at (4.5,0.5) {\scalebox{3}{$\times$}};
\end{tikzpicture}} & 
\scalebox{0.35}{\begin{tikzpicture}
   \draw[step=1cm,black,very thin] (0,0) grid (6,1);
   \node at (1.5,0.5) {\scalebox{3}{$\times$}};
   \node at (5.5,0.5) {\scalebox{3}{$\times$}};
\end{tikzpicture}} &
\scalebox{0.35}{\begin{tikzpicture}
   \draw[step=1cm,black,very thin] (0,0) grid (6,1);
   \node at (2.5,0.5) {\scalebox{3}{$\times$}};
   \node at (3.5,0.5) {\scalebox{3}{$\times$}};
\end{tikzpicture}} \\
\scalebox{0.35}{\begin{tikzpicture}
   \draw[step=1cm,black,very thin] (0,0) grid (6,1);
   \node at (2.5,0.5) {\scalebox{3}{$\times$}};
   \node at (4.5,0.5) {\scalebox{3}{$\times$}};
\end{tikzpicture}} &
\scalebox{0.35}{\begin{tikzpicture}
   \draw[step=1cm,black,very thin] (0,0) grid (6,1);
   \node at (2.5,0.5) {\scalebox{3}{$\times$}};
   \node at (5.5,0.5) {\scalebox{3}{$\times$}};
\end{tikzpicture}} &
\scalebox{0.35}{\begin{tikzpicture}
   \draw[step=1cm,black,very thin] (0,0) grid (6,1);
   \node at (3.5,0.5) {\scalebox{3}{$\times$}};
   \node at (4.5,0.5) {\scalebox{3}{$\times$}};
\end{tikzpicture}} &
\scalebox{0.35}{\begin{tikzpicture}
   \draw[step=1cm,black,very thin] (0,0) grid (6,1);
   \node at (3.5,0.5) {\scalebox{3}{$\times$}};
   \node at (5.5,0.5) {\scalebox{3}{$\times$}};
\end{tikzpicture}} & 
\scalebox{0.35}{\begin{tikzpicture}
   \draw[step=1cm,black,very thin] (0,0) grid (6,1);
   \node at (4.5,0.5) {\scalebox{3}{$\times$}};
   \node at (5.5,0.5) {\scalebox{3}{$\times$}};
\end{tikzpicture}}
\end{array}$
    \caption{All the configurations for $n=6$ and $s=k=2$.}
    \label{fig:all_configurations}
\end{figure}  
\end{example}

Now, we fix $n$ and $s$, and we look at the configurations of a row of $n$ boxes with $s$ marked ones. For $0\leq j\leq s$, we denote by \demph{$B_j$}  the number of these configurations that satisfy the $j$-condition. For $1\leq l \leq s+1$, we denote by \demph{$K_l$}  the number of these configurations that satisfy at least $l$ of the $i$-conditions, with $0\leq i\leq s$.
The binomial identity in Equation~\eqref{eq:binomial_identity} then represents a counting problem regarding this combinatorial model. 
More concretely, the LHS of Equation~\eqref{eq:binomial_identity} counts the total number of $j$-configurations, for $0\leq j\leq s$. That is, we count the configurations with repetitions (for the $j$-conditions) via $B_j$. 

\begin{example}
  In Figure~\ref{fig:all_config_byj}, we illustrate the corresponding list of configurations for our example according to the $j$-condition they satisfy, for $0\leq j \leq 2$. Moreover, we highlight in red one of the configurations to point out that the same configuration can be counted more than once. 
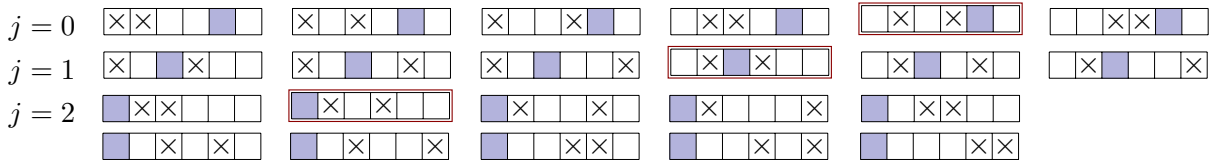
\begin{figure}[ht]
\centering
$    \begin{array}{rcccccc}
j=0 & 
\scalebox{0.35}{\begin{tikzpicture}
    \fill[DarkBlue!30] (4,0) rectangle (5,1);
   \draw[step=1cm,black,very thin] (0,0) grid (6,1);
   \node at (0.5,0.5) {\scalebox{3}{$\times$}};
   \node at (1.5,0.5) {\scalebox{3}{$\times$}};
\end{tikzpicture}} & 
\scalebox{0.35}{\begin{tikzpicture}
   \fill[DarkBlue!30] (4,0) rectangle (5,1);
   \draw[step=1cm,black,very thin] (0,0) grid (6,1);
   \node at (0.5,0.5) {\scalebox{3}{$\times$}};
   \node at (2.5,0.5) {\scalebox{3}{$\times$}};
\end{tikzpicture}} & 
\scalebox{0.35}{\begin{tikzpicture}
    \fill[DarkBlue!30] (4,0) rectangle (5,1);
   \draw[step=1cm,black,very thin] (0,0) grid (6,1);
   \node at (0.5,0.5) {\scalebox{3}{$\times$}};
   \node at (3.5,0.5) {\scalebox{3}{$\times$}};
\end{tikzpicture}} &
\scalebox{0.35}{\begin{tikzpicture}
    \fill[DarkBlue!30] (4,0) rectangle (5,1);
   \draw[step=1cm,black,very thin] (0,0) grid (6,1);
   \node at (1.5,0.5) {\scalebox{3}{$\times$}};
   \node at (2.5,0.5) {\scalebox{3}{$\times$}};
\end{tikzpicture}} & 
\scalebox{0.35}{\begin{tikzpicture}
    \draw[DarkRed, very thick] (-0.1,-0.1) rectangle (6.1,1.1); 
    \fill[DarkBlue!30] (4,0) rectangle (5,1);
   \draw[step=1cm,black,very thin] (0,0) grid (6,1);
   \node at (1.5,0.5) {\scalebox{3}{$\times$}};
   \node at (3.5,0.5) {\scalebox{3}{$\times$}};
\end{tikzpicture}} & 
\scalebox{0.35}{\begin{tikzpicture}
    \fill[DarkBlue!30] (4,0) rectangle (5,1);
   \draw[step=1cm,black,very thin] (0,0) grid (6,1);
   \node at (2.5,0.5) {\scalebox{3}{$\times$}};
   \node at (3.5,0.5) {\scalebox{3}{$\times$}};
\end{tikzpicture}}\\
j=1 & 
\scalebox{0.35}{\begin{tikzpicture}
    \fill[DarkBlue!30] (2,0) rectangle (3,1);
   \draw[step=1cm,black,very thin] (0,0) grid (6,1);
   \node at (0.5,0.5) {\scalebox{3}{$\times$}};
   \node at (3.5,0.5) {\scalebox{3}{$\times$}};
\end{tikzpicture}} & 
\scalebox{0.35}{\begin{tikzpicture}
   \fill[DarkBlue!30] (2,0) rectangle (3,1);
   \draw[step=1cm,black,very thin] (0,0) grid (6,1);
   \node at (0.5,0.5) {\scalebox{3}{$\times$}};
   \node at (4.5,0.5) {\scalebox{3}{$\times$}};
\end{tikzpicture}} & 
\scalebox{0.35}{\begin{tikzpicture}
    \fill[DarkBlue!30] (2,0) rectangle (3,1);
   \draw[step=1cm,black,very thin] (0,0) grid (6,1);
   \node at (0.5,0.5) {\scalebox{3}{$\times$}};
   \node at (5.5,0.5) {\scalebox{3}{$\times$}};
\end{tikzpicture}} &
\scalebox{0.35}{\begin{tikzpicture}
   \draw[DarkRed, very thick] (-0.1,-0.1) rectangle (6.1,1.1); 
    \fill[DarkBlue!30] (2,0) rectangle (3,1);
   \draw[step=1cm,black,very thin] (0,0) grid (6,1);
   \node at (1.5,0.5) {\scalebox{3}{$\times$}};
   \node at (3.5,0.5) {\scalebox{3}{$\times$}};
\end{tikzpicture}} & 
\scalebox{0.35}{\begin{tikzpicture}
    \fill[DarkBlue!30] (2,0) rectangle (3,1);
   \draw[step=1cm,black,very thin] (0,0) grid (6,1);
   \node at (1.5,0.5) {\scalebox{3}{$\times$}};
   \node at (4.5,0.5) {\scalebox{3}{$\times$}};
\end{tikzpicture}} & 
\scalebox{0.35}{\begin{tikzpicture}
    \fill[DarkBlue!30] (2,0) rectangle (3,1);
   \draw[step=1cm,black,very thin] (0,0) grid (6,1);
   \node at (1.5,0.5) {\scalebox{3}{$\times$}};
   \node at (5.5,0.5) {\scalebox{3}{$\times$}};
\end{tikzpicture}}\\
j=2 & 
\scalebox{0.35}{\begin{tikzpicture}
    \fill[DarkBlue!30] (0,0) rectangle (1,1);
   \draw[step=1cm,black,very thin] (0,0) grid (6,1);
   \node at (1.5,0.5) {\scalebox{3}{$\times$}};
   \node at (2.5,0.5) {\scalebox{3}{$\times$}};
\end{tikzpicture}} & 
\scalebox{0.35}{\begin{tikzpicture}
    \draw[DarkRed, very thick] (-0.1,-0.1) rectangle (6.1,1.1); 
   \fill[DarkBlue!30] (0,0) rectangle (1,1);
   \draw[step=1cm,black,very thin] (0,0) grid (6,1);
   \node at (1.5,0.5) {\scalebox{3}{$\times$}};
   \node at (3.5,0.5) {\scalebox{3}{$\times$}};
\end{tikzpicture}} & 
\scalebox{0.35}{\begin{tikzpicture}
    \fill[DarkBlue!30] (0,0) rectangle (1,1);
   \draw[step=1cm,black,very thin] (0,0) grid (6,1);
   \node at (1.5,0.5) {\scalebox{3}{$\times$}};
   \node at (4.5,0.5) {\scalebox{3}{$\times$}};
\end{tikzpicture}} &
\scalebox{0.35}{\begin{tikzpicture}
    \fill[DarkBlue!30] (0,0) rectangle (1,1);
   \draw[step=1cm,black,very thin] (0,0) grid (6,1);
   \node at (1.5,0.5) {\scalebox{3}{$\times$}};
   \node at (5.5,0.5) {\scalebox{3}{$\times$}};
\end{tikzpicture}} & 
\scalebox{0.35}{\begin{tikzpicture}
    \fill[DarkBlue!30] (0,0) rectangle (1,1);
   \draw[step=1cm,black,very thin] (0,0) grid (6,1);
   \node at (2.5,0.5) {\scalebox{3}{$\times$}};
   \node at (3.5,0.5) {\scalebox{3}{$\times$}};
\end{tikzpicture}} & 
\\
& \scalebox{0.35}{\begin{tikzpicture}
    \fill[DarkBlue!30] (0,0) rectangle (1,1);
   \draw[step=1cm,black,very thin] (0,0) grid (6,1);
   \node at (2.5,0.5) {\scalebox{3}{$\times$}};
   \node at (4.5,0.5) {\scalebox{3}{$\times$}};
\end{tikzpicture}} & 
\scalebox{0.35}{\begin{tikzpicture}
   \fill[DarkBlue!30] (0,0) rectangle (1,1);
   \draw[step=1cm,black,very thin] (0,0) grid (6,1);
   \node at (2.5,0.5) {\scalebox{3}{$\times$}};
   \node at (5.5,0.5) {\scalebox{3}{$\times$}};
\end{tikzpicture}} & 
\scalebox{0.35}{\begin{tikzpicture}
    \fill[DarkBlue!30] (0,0) rectangle (1,1);
   \draw[step=1cm,black,very thin] (0,0) grid (6,1);
   \node at (3.5,0.5) {\scalebox{3}{$\times$}};
   \node at (4.5,0.5) {\scalebox{3}{$\times$}};
\end{tikzpicture}} &
\scalebox{0.35}{\begin{tikzpicture}
    \fill[DarkBlue!30] (0,0) rectangle (1,1);
   \draw[step=1cm,black,very thin] (0,0) grid (6,1);
   \node at (3.5,0.5) {\scalebox{3}{$\times$}};
   \node at (5.5,0.5) {\scalebox{3}{$\times$}};
\end{tikzpicture}} & 
\scalebox{0.35}{\begin{tikzpicture}
    \fill[DarkBlue!30] (0,0) rectangle (1,1);
   \draw[step=1cm,black,very thin] (0,0) grid (6,1);
   \node at (4.5,0.5) {\scalebox{3}{$\times$}};
   \node at (5.5,0.5) {\scalebox{3}{$\times$}};
\end{tikzpicture}} & 
\\
    \end{array}$
    \caption{Configurations for $n=6$ and $s=k=2$ satisfying the $j$-condition.}
    \label{fig:all_config_byj}
\end{figure}
\end{example}

Note that once we fix $j$, if we want to construct a configuration that satisfies the $j$-condition, there are $s+k-2j$ boxes to the right of the barrier, from which we choose $j$ of them to be marked boxes, and there are $n-1-s-k+2j$ boxes to the left of the barrier, from which we choose $s-j$ of them to be marked boxes. Thus, we have the following result.
\begin{lemma}
For $0\leq j \leq s$, $\displaystyle{B_j =  \binom{s + k -2j}{s-j}\binom{n-1-s - k +2j}{j}}$.
\end{lemma}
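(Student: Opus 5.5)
The count is direct: for fixed $j$, a configuration satisfying the $j$-condition amounts to an independent choice of marked boxes on the two sides of a single, fixed barrier position. I would set up a bijection between such configurations and pairs of subsets, then count the pairs.

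First I would check that the barrier position $p_j := s+k+1-2j$ is a genuine box, i.e.\ $1\le p_j\le n$. Since $0\le j\le s\le k$, we get $p_j\ge s+k+1-2s = k-s+1\ge 1$. Since $s\le k\le \lfloor (n-1)/2\rfloor$, we get $s+k\le n-1$, and hence $p_j\le s+k+1\le n$. Therefore there are exactly $p_j-1=s+k-2j$ boxes strictly to the left of the barrier and $n-p_j = n-1-s-k+2j$ boxes strictly to the right. For fixed $j$ the barrier position is determined, so each configuration is counted at most once in $B_j$; the overcounting happens only across different values of $j$.

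Next I would define the map sending a configuration $\gamma=(\gamma_1,\ldots,\gamma_s)$ that satisfies the $j$-condition to the pair
$$\bigl(\{\gamma_t : \gamma_t<p_j\},\ \{\gamma_t:\gamma_t>p_j\}\bigr).$$
By the $j$-condition, the first set is an $(s-j)$-subset of the $s+k-2j$ left boxes, the second is a $j$-subset of the $n-1-s-k+2j$ right boxes, and no mark lies on $p_j$. Conversely, any such pair of subsets has union of size $s$ avoiding $p_j$. Sorting this union gives a configuration with $s$ marked boxes that satisfies the $j$-condition. These two maps are mutually inverse, so
$$B_j=\binom{s+k-2j}{s-j}\binom{n-1-s-k+2j}{j}.$$

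I expect no real obstacle. The only care points are the range check on $p_j$, which uses the hypothesis $s\le k\le\lfloor (n-1)/2\rfloor$, and attaching the two binomial factors to the correct sides of the barrier: the left side has $s+k-2j$ boxes containing $s-j$ marks, and the right side has $n-1-s-k+2j$ boxes containing $j$ marks. The total is the same regardless of which side is described first.
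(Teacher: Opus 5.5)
Your proof is correct and follows the same direct count as the paper. With the barrier fixed at position $s+k+1-2j$, a configuration satisfying the $j$-condition is exactly an independent choice of $s-j$ marks among the $s+k-2j$ boxes to its left and $j$ marks among the $n-1-s-k+2j$ boxes to its right. Your check that the barrier position lies in $[1,n]$ and your assignment of box counts to sides are more careful than the paper's one-line justification, whose prose puts $s+k-2j$ and $n-1-s-k+2j$ on the wrong sides of the barrier.
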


For the RHS of Equation~\eqref{eq:binomial_identity}, we count the same set of configurations (with repetitions) but grouped by the minimum number of $i$-conditions they satisfy, with $0\leq i \leq s$. 
\begin{example}\label{ex:running example 3}
  In Figure~\ref{fig:all_config_byl}, we list all the configurations for our example according to the number $t$ of the $i$-conditions, with $0\leq i\leq 2$, that they satisfy, for $1\leq t \leq 3$. Then, we have that $K_3$ counts the configurations in row $t=3$, $\binom{6}{0}=1$; $K_2$ counts the configurations in rows $t=2$ and $3$, $\binom{6}{1}=6$; and $K_1$ the configurations in rows $t=1,2$, and $3$, $\binom{6}{2}=15$.
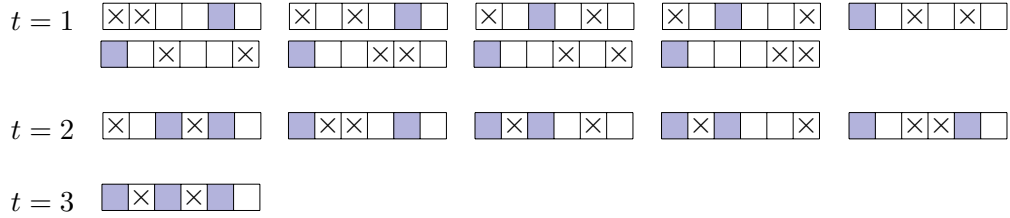
\begin{figure}[ht]
\centering
$\begin{array}{rccccc}
t=1 & 
\scalebox{0.35}{\begin{tikzpicture}
    \fill[DarkBlue!30] (4,0) rectangle (5,1);
   \draw[step=1cm,black,very thin] (0,0) grid (6,1);
   \node at (0.5,0.5) {\scalebox{3}{$\times$}};
   \node at (1.5,0.5) {\scalebox{3}{$\times$}};
\end{tikzpicture}} & 
\scalebox{0.35}{\begin{tikzpicture}
   \fill[DarkBlue!30] (4,0) rectangle (5,1);
   \draw[step=1cm,black,very thin] (0,0) grid (6,1);
   \node at (0.5,0.5) {\scalebox{3}{$\times$}};
   \node at (2.5,0.5) {\scalebox{3}{$\times$}};
\end{tikzpicture}} &
\scalebox{0.35}{\begin{tikzpicture}
   \fill[DarkBlue!30] (2,0) rectangle (3,1);
   \draw[step=1cm,black,very thin] (0,0) grid (6,1);
   \node at (0.5,0.5) {\scalebox{3}{$\times$}};
   \node at (4.5,0.5) {\scalebox{3}{$\times$}};
\end{tikzpicture}} &
\scalebox{0.35}{\begin{tikzpicture}
    \fill[DarkBlue!30] (2,0) rectangle (3,1);
   \draw[step=1cm,black,very thin] (0,0) grid (6,1);
   \node at (0.5,0.5) {\scalebox{3}{$\times$}};
   \node at (5.5,0.5) {\scalebox{3}{$\times$}};
\end{tikzpicture}} 
& \scalebox{0.35}{\begin{tikzpicture}
    \fill[DarkBlue!30]  (0,0) rectangle (1,1);
   \draw[step=1cm,black,very thin] (0,0) grid (6,1);
   \node at (2.5,0.5) {\scalebox{3}{$\times$}};
   \node at (4.5,0.5) {\scalebox{3}{$\times$}};
\end{tikzpicture}} \\ 
& 
\scalebox{0.35}{\begin{tikzpicture}
   \fill[DarkBlue!30]  (0,0) rectangle (1,1);
   \draw[step=1cm,black,very thin] (0,0) grid (6,1);
   \node at (2.5,0.5) {\scalebox{3}{$\times$}};
   \node at (5.5,0.5) {\scalebox{3}{$\times$}};
\end{tikzpicture}} & 
\scalebox{0.35}{\begin{tikzpicture}
    \fill[DarkBlue!30] (0,0) rectangle (1,1);
   \draw[step=1cm,black,very thin] (0,0) grid (6,1);
   \node at (3.5,0.5) {\scalebox{3}{$\times$}};
   \node at (4.5,0.5) {\scalebox{3}{$\times$}};
\end{tikzpicture}} &
\scalebox{0.35}{\begin{tikzpicture}
    \fill[DarkBlue!30] (0,0) rectangle (1,1);
   \draw[step=1cm,black,very thin] (0,0) grid (6,1);
   \node at (3.5,0.5) {\scalebox{3}{$\times$}};
   \node at (5.5,0.5) {\scalebox{3}{$\times$}};
\end{tikzpicture}} & 
\scalebox{0.35}{\begin{tikzpicture}
    \fill[DarkBlue!30] (0,0) rectangle (1,1);
   \draw[step=1cm,black,very thin] (0,0) grid (6,1);
   \node at (4.5,0.5) {\scalebox{3}{$\times$}};
   \node at (5.5,0.5) {\scalebox{3}{$\times$}};
\end{tikzpicture}} & 
\\[0.45cm] 
t=2 & 
\scalebox{0.35}{\begin{tikzpicture}
    \fill[DarkBlue!30] (2,0) rectangle (3,1);
    \fill[DarkBlue!30] (4,0) rectangle (5,1);
   \draw[step=1cm,black,very thin] (0,0) grid (6,1);
   \node at (0.5,0.5) {\scalebox{3}{$\times$}};
   \node at (3.5,0.5) {\scalebox{3}{$\times$}};
\end{tikzpicture}} & 
\scalebox{0.35}{\begin{tikzpicture}
    \fill[DarkBlue!30] (0,0) rectangle (1,1);
    \fill[DarkBlue!30] (4,0) rectangle (5,1);
   \draw[step=1cm,black,very thin] (0,0) grid (6,1);
   \node at (1.5,0.5) {\scalebox{3}{$\times$}};
   \node at (2.5,0.5) {\scalebox{3}{$\times$}};
\end{tikzpicture}} &
\scalebox{0.35}{\begin{tikzpicture}
    \fill[DarkBlue!30] (0,0) rectangle (1,1);
    \fill[DarkBlue!30] (2,0) rectangle (3,1);
   \draw[step=1cm,black,very thin] (0,0) grid (6,1);
   \node at (1.5,0.5) {\scalebox{3}{$\times$}};
   \node at (4.5,0.5) {\scalebox{3}{$\times$}};
\end{tikzpicture}} & 
\scalebox{0.35}{\begin{tikzpicture}
    \fill[DarkBlue!30] (0,0) rectangle (1,1);
    \fill[DarkBlue!30] (2,0) rectangle (3,1);
   \draw[step=1cm,black,very thin] (0,0) grid (6,1);
   \node at (1.5,0.5) {\scalebox{3}{$\times$}};
   \node at (5.5,0.5) {\scalebox{3}{$\times$}};
\end{tikzpicture}} &
\scalebox{0.35}{\begin{tikzpicture}
    \fill[DarkBlue!30] (0,0) rectangle (1,1);
    \fill[DarkBlue!30] (4,0) rectangle (5,1);
   \draw[step=1cm,black,very thin] (0,0) grid (6,1);
   \node at (2.5,0.5) {\scalebox{3}{$\times$}};
   \node at (3.5,0.5) {\scalebox{3}{$\times$}};
\end{tikzpicture}} \\[0.45cm] 
t=3 & 
\scalebox{0.35}{\begin{tikzpicture}
   \fill[DarkBlue!30] (0,0) rectangle (1,1);
   \fill[DarkBlue!30] (2,0) rectangle (3,1);
   \fill[DarkBlue!30] (4,0) rectangle (5,1);
   \draw[step=1cm,black,very thin] (0,0) grid (6,1);
   \node at (1.5,0.5) {\scalebox{3}{$\times$}};
   \node at (3.5,0.5) {\scalebox{3}{$\times$}};
\end{tikzpicture}} & & & & 
\end{array}$
    \caption{Configurations for $n=6$ and $s=k=2$ satisfying $t$  of the $i$-conditions.}
    \label{fig:all_config_byl}
\end{figure}
\end{example}

Since we count the same set of configurations, we have the following result. 
\begin{lemma}\label{lem:relation_coeffs_BK}
$\displaystyle{\sum_{j=0}^{s} B_j  = \sum_{l=0}^{s} K_{s+1-l}} = \sum_{l=1}^{s+1} K_{l} $.
\end{lemma}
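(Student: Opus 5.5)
This is a double-counting statement, and I would prove it by counting incidences. Let $\mathcal{C}$ be the set of configurations of $n$ boxes with $s$ marked boxes. For $C\in\mathcal{C}$, let $t(C)$ be the number of indices $i\in\{0,\ldots,s\}$ such that $C$ satisfies the $i$-condition. Consider the set of pairs $(C,j)$ with $0\le j\le s$ where $C$ satisfies the $j$-condition, and count it in two ways.

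\emph{Grouping by $j$.} Fixing $j$ and collecting all $C$ satisfying the $j$-condition gives $B_j$ pairs, so the total is $\sum_{j=0}^s B_j$. The one point to check is that the $j$-condition is a property of $C$, not extra data. The barrier position $s+k+1-2j$ is determined by $j$, and the condition only asks that this box be unmarked, with exactly $s-j$ marks to its left (hence $j$ to its right). The position also lies in $[n]$: since $s\le k\le\lfloor (n-1)/2\rfloor$, we have $1\le s+k+1-2j\le n$. So a pair $(C,j)$ is just a configuration together with one condition it satisfies, and no configuration is counted twice for the same $j$.

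\emph{Grouping by $C$.} Each $C$ contributes $t(C)$ pairs, so the total is $\sum_{C\in\mathcal{C}} t(C)$. By the layer-cake identity $t(C)=\sum_{l\ge 1}\mathbf{1}[t(C)\ge l]$ we get
$$\sum_{C\in\mathcal{C}} t(C)=\sum_{l\ge1}\#\{C: t(C)\ge l\}=\sum_{l\ge 1}K_l .$$
Since there are only $s+1$ conditions, $t(C)\le s+1$, so $K_l=0$ for $l>s+1$ and the sum stops at $l=s+1$. Finally, the reindexing $l\mapsto s+1-l$ turns $\sum_{l=1}^{s+1}K_l$ into $\sum_{l=0}^{s}K_{s+1-l}$, which completes the chain of equalities in the statement.

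No step here is a real obstacle. The only care needed is the well-definedness of the $j$-condition, which the parameter bounds settle as above. The hard work, identifying $K_{s+1-l}=\binom{n}{l}$, comes after this lemma and is not needed for it.
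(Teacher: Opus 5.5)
Your proposal is correct and follows the same double-counting argument as the paper, which justifies the lemma only by noting that both sides count the same configurations with multiplicity. You additionally make the two points the paper leaves implicit: the layer-cake step $\sum_{C} t(C)=\sum_{l=1}^{s+1}K_l$, and the check that each barrier position $s+k+1-2j$ lies in $[n]$.
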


Thus, the binomial identity in Equation~\eqref{eq:binomial_identity} is equivalent to the following result.
\begin{thm}\label{thm: final identity}
For $0 \leq l \leq s \leq \left\lfloor \frac{n-1}{2}\right\rfloor$,   $\displaystyle{K_{s+1-l} = \binom{n}{l} }.$
\end{thm}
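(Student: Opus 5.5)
The plan is to recast configurations as lattice paths. Once that is done, the $i$-conditions become crossings of a single horizontal edge, and $K_{s+1-l}$ can be computed by an iterated reflection argument. This also explains why the count does not depend on $k$.

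\emph{Step 1 (path encoding).} Encode a configuration $\gamma$ as a lattice path $P_\gamma$ with $n$ steps. Step $x$ is an up step $(+1)$ if box $x$ is marked and a down step $(-1)$ otherwise. Thus $P_\gamma$ starts at height $0$ and ends at height $2s-n$. Let $p_j=s+k+1-2j$ be the barrier position of the $j$-condition. The $j$-condition says that box $p_j$ is unmarked and that exactly $s-j$ of the first $p_j-1$ boxes are marked. The height of $P_\gamma$ after $p_j-1=s+k-2j$ steps is then $2(s-j)-(s+k-2j)=s-k$. Hence the $j$-condition holds exactly when $P_\gamma$ is at height $c:=s-k\le 0$ at time $s+k-2j$ and takes a down step there.

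Conversely, suppose $P_\gamma$ visits height $c$ at some time $t$. The number of up steps used so far is $(t+c)/2\le s$, which forces $t\le s+k$ and $t\equiv s+k \pmod 2$. Hence $t=s+k-2j$ for some $j\ge 0$, and $j\le s$ because $t\ge -c=k-s$. Therefore the number of $i$-conditions satisfied by $\gamma$ equals the number $D(\gamma)$ of down-steps of $P_\gamma$ from level $c$ to level $c-1$, with no restriction on the time.

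\emph{Step 2 (counting crossings).} The path starts at $0\ge c$. The hypothesis $s+k\le n-1$ gives $2s-n\le c-1$, so the path ends strictly below $c$. Consequently $D(\gamma)=1+U(\gamma)$, where $U(\gamma)$ counts the up-steps from $c-1$ to $c$. Therefore
\[
K_{s+1-l}=\#\{\gamma : U(\gamma)\ge s-l\},
\]
and it remains to show that this set has $\binom{n}{l}$ elements, i.e.\ that it is in bijection with paths having $l$ up steps in total. Note that $k$ has now disappeared except through the level $c$, which is consistent with the statement.

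\emph{Step 3 (iterated reflection).} Let $r=s-l$. Given a path with $U\ge r$, I would locate its last $r$ up-crossings of the edge $\{c-1,c\}$ and remove them one at a time. The basic move reflects the portion of the path between a down-crossing and the next up-crossing of that edge about the edge. The combined effect should be to delete $r$ up-crossings while converting exactly $r$ up steps into down steps. This gives a map to unrestricted paths with $s-r=l$ up steps, which end at height $2l-n$, well below $c$.

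For the inverse, take an arbitrary path with $l$ up steps. Since it starts at $0\ge c$ and ends at $2l-n<c-1$, it must cross down through the edge. The lowest height it reaches after its first descent below $c$ tells how far it overshoots, and I would reinsert crossings by reflecting back the segments after its successive first passages to the levels $c-1, c-3,\dots$. This is the classical correspondence for paths crossing a fixed level at least $r$ times.

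The main obstacle is making this bijection airtight. I need to check that the reflections compose correctly, so that each reflected portion really turns one up step into a down step, and that the inverse always reproduces at least $r$ up-crossings and nothing more. If a clean bijection proves fiddly, my fallback is an algebraic route: the generating function for the number of edge crossings, obtained from the first-passage decomposition, gives $\#\{U\ge r\}=\#\{\text{paths from }0\text{ to }2s-n-2r\}$, which equals $\binom{n}{s-r}=\binom{n}{l}$.
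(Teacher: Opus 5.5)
Your Steps 1 and 2 are correct, and they give a genuinely different route from the paper's. The $j$-condition is exactly a down step $c\to c-1$ at time $s+k-2j$, every down-crossing of the edge happens at such a time, and since the path starts at $0\ge c$ and ends at $2s-n\le c-1$ you get $D=1+U$, hence $K_{s+1-l}=\#\{U\ge s-l\}$.

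The gap is Step 3, which carries all the content and is not proved.

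\begin{itemize}
\item Reflecting a segment ``about the edge'', i.e.\ about $y=c-\tfrac12$, swaps the heights $c-1$ and $c$. The reflected piece therefore no longer joins the adjacent crossing steps, so the basic move as stated does not produce a lattice path.
\item Working with the \emph{last} $r$ up-crossings does not match the inverse you sketch, which uses first passages.
\item Reflected excursions can create new crossings, so the map does not ``delete'' up-crossings. For example, with $c=0$ the path DDUUD has $U=1$, and your move sends it to DUDDD, which still has $U=1$. Injectivity must therefore come from a canonical choice of segments, and you never specify one.
\item The fallback simply restates the identity you need.
\end{itemize}

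The missing lemma has a short proof by alternating reflection. Put $\tau_0=0$, and let $\tau_j$ be the first time after $\tau_{j-1}$ at which $P$ is at $c-1$ ($j$ odd) or at $c$ ($j$ even). Then $\tau_{2m}$ is the end of the $m$-th up-crossing, so $U(P)\ge m$ if and only if $\tau_{2m}$ exists.

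\begin{itemize}
\item Reflect the part after $\tau_{2m}$ about $c$. Since $\tau_1,\dots,\tau_{2m}$ depend only on the path up to $\tau_{2m}$, this is a bijection onto paths ending at $2c-b\ge c+1$ for which $\tau_{2m}$ exists. For such endpoints this is equivalent to $\tau_{2m-1}$ existing.
\item Then reflect the part after $\tau_{2m-1}$ about $c-1$. This is a bijection onto paths ending at $b-2$ for which $\tau_{2m-1}$ exists. Since $b-2<c-1$, this is equivalent to $\tau_{2m-2}$ existing.
\end{itemize}

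Hence
\[
\#\{P:0\to b,\ U\ge m\}=\#\{P:0\to b-2,\ U\ge m-1\}.
\]
Iterating $r=s-l$ times yields all paths from $0$ to $2s-n-2r$, and there are $\binom{n}{s-r}=\binom{n}{l}$ of them.

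The composite of the two reflections is your basic move made precise. Reflect about the level $c-1$ the segment from the end of the $m$-th down-crossing to the end of the $m$-th up-crossing, so that this up step becomes a down step, and translate the tail by $-2$. Apply this for $m=r,r-1,\dots,1$, i.e.\ to the \emph{first} $r$ crossing pairs.

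With this lemma your proof is complete. It bypasses the paper's machinery: the not-at-home count $T(n,s,i,b_0)$, the flipped-block bijection, independence of $s$, the recursions and the telescoping sum. It also shows transparently why $k$ is irrelevant, since $c=s-k$ drops out of the reflection count. Finally, $\#\{U=s-i\}=\binom{n}{i}-\binom{n}{i-1}$ recovers the paper's $T(n,s,i,b_0)$.
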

We dedicate the rest of this section to proving this identity.

\subsection{From $i$-conditions to not at home marked boxes}

Our first step is to further develop the combinatorial model and find an alternative interpretation for $K_{s+1-l}$.
Recall that $K_{s+1-l}$ counts the number of configurations that satisfy at least $s+1-l$ of the $i$-conditions, with $0\leq i \leq s$, and that for the $i$-condition, the box at position $s+k+1-2i$ is a barrier. Now, given a configuration, we want to look at all the $i$-conditions at once. For that, we decorate all the configurations with $s$ marked boxes and $s+1$ barriers in positions $s+k+1-2i$, for $0\leq i \leq s$. We need to keep track of the position of the first barrier, and so let \demph{$b_0$}$:=k+1-s$. Note that since $s$ and $k$ are fixed parameters of our model and $b_0$ is determined by $s$ and $k$, then $b_0$ is fixed in our model too, and we will assume that it is part of the initial input data.

\begin{example}
In Figure~\ref{fig: config wrt b_0}, we illustrate the configurations of our running example, for which $b_0=1$.

\begin{figure}[ht]
\centering
$\begin{array}{ccccc}
\scalebox{0.35}{\begin{tikzpicture}
   \fill[DarkBlue!30] (0,0) rectangle (1,1);
   \fill[DarkBlue!30] (2,0) rectangle (3,1);
   \fill[DarkBlue!30] (4,0) rectangle (5,1);
   \draw[step=1cm,black,very thin] (0,0) grid (6,1);
   \node at (0.5,0.5) {\scalebox{3}{$\times$}};
   \node at (1.5,0.5) {\scalebox{3}{$\times$}};
\end{tikzpicture}} & 
\scalebox{0.35}{\begin{tikzpicture}
\fill[DarkBlue!30] (0,0) rectangle (1,1);
   \fill[DarkBlue!30] (2,0) rectangle (3,1);
   \fill[DarkBlue!30] (4,0) rectangle (5,1);
   \draw[step=1cm,black,very thin] (0,0) grid (6,1);
   \node at (0.5,0.5) {\scalebox{3}{$\times$}};
   \node at (2.5,0.5) {\scalebox{3}{$\times$}};
\end{tikzpicture}} &
\scalebox{0.35}{\begin{tikzpicture}
\fill[DarkBlue!30] (0,0) rectangle (1,1);
   \fill[DarkBlue!30] (2,0) rectangle (3,1);
   \fill[DarkBlue!30] (4,0) rectangle (5,1);
   \draw[step=1cm,black,very thin] (0,0) grid (6,1);
   \node at (0.5,0.5) {\scalebox{3}{$\times$}};
   \node at (3.5,0.5) {\scalebox{3}{$\times$}};
\end{tikzpicture}} &
\scalebox{0.35}{\begin{tikzpicture}
\fill[DarkBlue!30] (0,0) rectangle (1,1);
   \fill[DarkBlue!30] (2,0) rectangle (3,1);
   \fill[DarkBlue!30] (4,0) rectangle (5,1);
   \draw[step=1cm,black,very thin] (0,0) grid (6,1);
   \node at (0.5,0.5) {\scalebox{3}{$\times$}};
   \node at (4.5,0.5) {\scalebox{3}{$\times$}};
\end{tikzpicture}} &
\scalebox{0.35}{\begin{tikzpicture}
\fill[DarkBlue!30] (0,0) rectangle (1,1);
   \fill[DarkBlue!30] (2,0) rectangle (3,1);
   \fill[DarkBlue!30] (4,0) rectangle (5,1);
   \draw[step=1cm,black,very thin] (0,0) grid (6,1);
   \node at (0.5,0.5) {\scalebox{3}{$\times$}};
   \node at (5.5,0.5) {\scalebox{3}{$\times$}};
\end{tikzpicture}} \\
\scalebox{0.35}{\begin{tikzpicture}
\fill[DarkBlue!30] (0,0) rectangle (1,1);
   \fill[DarkBlue!30] (2,0) rectangle (3,1);
   \fill[DarkBlue!30] (4,0) rectangle (5,1);
   \draw[step=1cm,black,very thin] (0,0) grid (6,1);
   \node at (1.5,0.5) {\scalebox{3}{$\times$}};
   \node at (2.5,0.5) {\scalebox{3}{$\times$}};
\end{tikzpicture}} & 
\scalebox{0.35}{\begin{tikzpicture}
\fill[DarkBlue!30] (0,0) rectangle (1,1);
   \fill[DarkBlue!30] (2,0) rectangle (3,1);
   \fill[DarkBlue!30] (4,0) rectangle (5,1);
   \draw[step=1cm,black,very thin] (0,0) grid (6,1);
   \node at (1.5,0.5) {\scalebox{3}{$\times$}};
   \node at (3.5,0.5) {\scalebox{3}{$\times$}};
\end{tikzpicture}} &
\scalebox{0.35}{\begin{tikzpicture}
\fill[DarkBlue!30] (0,0) rectangle (1,1);
   \fill[DarkBlue!30] (2,0) rectangle (3,1);
   \fill[DarkBlue!30] (4,0) rectangle (5,1);
   \draw[step=1cm,black,very thin] (0,0) grid (6,1);
   \node at (1.5,0.5) {\scalebox{3}{$\times$}};
   \node at (4.5,0.5) {\scalebox{3}{$\times$}};
\end{tikzpicture}} & 
\scalebox{0.35}{\begin{tikzpicture}
\fill[DarkBlue!30] (0,0) rectangle (1,1);
   \fill[DarkBlue!30] (2,0) rectangle (3,1);
   \fill[DarkBlue!30] (4,0) rectangle (5,1);
   \draw[step=1cm,black,very thin] (0,0) grid (6,1);
   \node at (1.5,0.5) {\scalebox{3}{$\times$}};
   \node at (5.5,0.5) {\scalebox{3}{$\times$}};
\end{tikzpicture}} &
\scalebox{0.35}{\begin{tikzpicture}
\fill[DarkBlue!30] (0,0) rectangle (1,1);
   \fill[DarkBlue!30] (2,0) rectangle (3,1);
   \fill[DarkBlue!30] (4,0) rectangle (5,1);
   \draw[step=1cm,black,very thin] (0,0) grid (6,1);
   \node at (2.5,0.5) {\scalebox{3}{$\times$}};
   \node at (3.5,0.5) {\scalebox{3}{$\times$}};
\end{tikzpicture}} \\
\scalebox{0.35}{\begin{tikzpicture}
\fill[DarkBlue!30] (0,0) rectangle (1,1);
   \fill[DarkBlue!30] (2,0) rectangle (3,1);
   \fill[DarkBlue!30] (4,0) rectangle (5,1);
   \draw[step=1cm,black,very thin] (0,0) grid (6,1);
   \node at (2.5,0.5) {\scalebox{3}{$\times$}};
   \node at (4.5,0.5) {\scalebox{3}{$\times$}};
\end{tikzpicture}} &
\scalebox{0.35}{\begin{tikzpicture}
\fill[DarkBlue!30] (0,0) rectangle (1,1);
   \fill[DarkBlue!30] (2,0) rectangle (3,1);
   \fill[DarkBlue!30] (4,0) rectangle (5,1);
   \draw[step=1cm,black,very thin] (0,0) grid (6,1);
   \node at (2.5,0.5) {\scalebox{3}{$\times$}};
   \node at (5.5,0.5) {\scalebox{3}{$\times$}};
\end{tikzpicture}} &
\scalebox{0.35}{\begin{tikzpicture}
\fill[DarkBlue!30] (0,0) rectangle (1,1);
   \fill[DarkBlue!30] (2,0) rectangle (3,1);
   \fill[DarkBlue!30] (4,0) rectangle (5,1);
   \draw[step=1cm,black,very thin] (0,0) grid (6,1);
   \node at (3.5,0.5) {\scalebox{3}{$\times$}};
   \node at (4.5,0.5) {\scalebox{3}{$\times$}};
\end{tikzpicture}} &
\scalebox{0.35}{\begin{tikzpicture}
\fill[DarkBlue!30] (0,0) rectangle (1,1);
   \fill[DarkBlue!30] (2,0) rectangle (3,1);
   \fill[DarkBlue!30] (4,0) rectangle (5,1);
   \draw[step=1cm,black,very thin] (0,0) grid (6,1);
   \node at (3.5,0.5) {\scalebox{3}{$\times$}};
   \node at (5.5,0.5) {\scalebox{3}{$\times$}};
\end{tikzpicture}} & 
\scalebox{0.35}{\begin{tikzpicture}
\fill[DarkBlue!30] (0,0) rectangle (1,1);
   \fill[DarkBlue!30] (2,0) rectangle (3,1);
   \fill[DarkBlue!30] (4,0) rectangle (5,1);
   \draw[step=1cm,black,very thin] (0,0) grid (6,1);
   \node at (4.5,0.5) {\scalebox{3}{$\times$}};
   \node at (5.5,0.5) {\scalebox{3}{$\times$}};
\end{tikzpicture}}
\end{array}$
    \caption{Set of configurations for $n=6$ and $s=k=2$ together with the $3$ barriers starting at $b_0=1$.}
    \label{fig: config wrt b_0}
\end{figure}
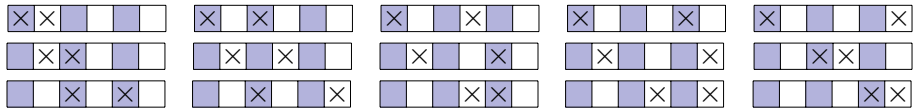
\end{example}

Since a marked box can be in the same box as a barrier, we introduce the following definition. Given a configuration $\gamma=(\gamma_1,\ldots, \gamma_s)$ and $b_0$, we say that the $j^{\text{th}}$ marked box, for $1\leq j \leq s$, is \demph{at home (with respect to $b_0$)} if $\gamma_j = b_0+2j-1$. That is if we label the barriers from left to right from $1^{\text{st}}$ to $(s+1)^{\text{th}}$, the $j^{\text{th}}$ marked box is at home if it is between the $j^{\text{th}}$ and $(j+1)^{\text{th}}$ barriers and we refer to this particular position as the \demph{$j^{\text{th}}$ home}. Otherwise, we say that the $j^{\text{th}}$ marked box is \demph{not at home (with respect to $b_0$)}. Given a configuration $\gamma$ and $b_0$, we denote by \demph{$nat(\gamma,b_0)$} the number of marked boxes \emph{not} at home in $\gamma$ with respect to $b_0$. We illustrate the relation between the homes, the barriers, and the conditions in Figure~\ref{fig:barriers&condition}.
We denote by \demph{$\mathcal{T}(n,s,i,b_0)$} the set of all configurations $\gamma$ with $n$ boxes in total and $s$ marked ones and such that $nat(\gamma)=i$ with respect to $b_0$, and by \demph{$T(n,s,i,b_0)$} the number of configurations in $\mathcal{T}(n,s,i,b_0)$. Since $b_0$ is fixed in our model, we omit the part of ``with respect to $b_0$'' when referring to at home or not at home marked boxes, but we keep $b_0$ in the notation as it plays an important role later on. 

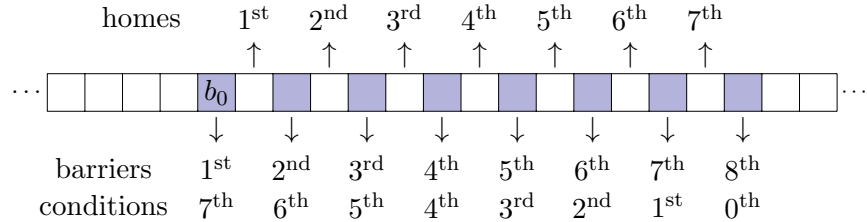
\begin{figure}[ht]
\centering
\scalebox{0.5}{
\begin{tikzpicture}
    \fill[DarkBlue!30] (4,0) rectangle (5,1);
    \fill[DarkBlue!30] (6,0) rectangle (7,1);
    \fill[DarkBlue!30] (8,0) rectangle (9,1);
    \fill[DarkBlue!30] (10,0) rectangle (11,1);
    \fill[DarkBlue!30] (12,0) rectangle (13,1);
    \fill[DarkBlue!30] (14,0) rectangle (15,1);
    \fill[DarkBlue!30] (16,0) rectangle (17,1);
    \fill[DarkBlue!30] (18,0) rectangle (19,1);
    \draw[step=1cm,black,very thin] (0,0) grid (21,1);
    \node at (-0.5,0.5){\scalebox{1.75}{$\cdots$}}; 
    \node at (21.5,0.5){\scalebox{1.5}{$\cdots$}}; 
    \node at (4.5,0.5){\scalebox{2}{$b_0$}};
    \node at (18.5,-0.5){\scalebox{2}{$\downarrow$}};
    \node at (16.5,-0.5){\scalebox{2}{$\downarrow$}};
    \node at (14.5,-0.5){\scalebox{2}{$\downarrow$}};
    \node at (12.5,-0.5){\scalebox{2}{$\downarrow$}};
    \node at (10.5,-0.5){\scalebox{2}{$\downarrow$}};
    \node at (8.5,-0.5){\scalebox{2}{$\downarrow$}};
    \node at (6.5,-0.5){\scalebox{2}{$\downarrow$}};
    \node at (4.5,-0.5){\scalebox{2}{$\downarrow$}};
    \node at (17.5,1.5){\scalebox{2}{$\uparrow$}};
    \node at (15.5,1.5){\scalebox{2}{$\uparrow$}};
    \node at (13.5,1.5){\scalebox{2}{$\uparrow$}};
    \node at (11.5,1.5){\scalebox{2}{$\uparrow$}};
    \node at (9.5,1.5){\scalebox{2}{$\uparrow$}};
    \node at (7.5,1.5){\scalebox{2}{$\uparrow$}};
    \node at (5.5,1.5){\scalebox{2}{$\uparrow$}};
    \node at (17.5,2.5){\scalebox{2}{$7^{\text{th}}$}};
    \node at (15.5,2.5){\scalebox{2}{$6^{\text{th}}$}};
    \node at (13.5,2.5){\scalebox{2}{$5^{\text{th}}$}};
    \node at (11.5,2.5){\scalebox{2}{$4^{\text{th}}$}};
    \node at (9.5,2.5){\scalebox{2}{$3^{\text{rd}}$}};
    \node at (7.5,2.5){\scalebox{2}{$2^{\text{nd}}$}};
    \node at (5.5,2.5){\scalebox{2}{$1^{\text{st}}$}};
    \node at (18.5,-1.5){\scalebox{2}{$8^{\text{th}}$}};
    \node at (16.5,-1.5){\scalebox{2}{$7^{\text{th}}$}};
    \node at (14.5,-1.5){\scalebox{2}{$6^{\text{th}}$}};
    \node at (12.5,-1.5){\scalebox{2}{$5^{\text{th}}$}};
    \node at (10.5,-1.5){\scalebox{2}{$4^{\text{th}}$}};
    \node at (8.5,-1.5){\scalebox{2}{$3^{\text{rd}}$}};
    \node at (6.5,-1.5){\scalebox{2}{$2^{\text{nd}}$}};
    \node at (4.5,-1.5){\scalebox{2}{$1^{\text{st}}$}};
    \node at (18.5,-2.5){\scalebox{2}{$0^{\text{th}}$}};
    \node at (16.5,-2.5){\scalebox{2}{$1^{\text{st}}$}};
    \node at (14.5,-2.5){\scalebox{2}{$2^{\text{nd}}$}};
    \node at (12.5,-2.5){\scalebox{2}{$3^{\text{rd}}$}};
    \node at (10.5,-2.5){\scalebox{2}{$4^{\text{th}}$}};
    \node at (8.5,-2.5){\scalebox{2}{$5^{\text{th}}$}};
    \node at (6.5,-2.5){\scalebox{2}{$6^{\text{th}}$}};
    \node at (4.5,-2.5){\scalebox{2}{$7^{\text{th}}$}};
    \node at (1.5,-1.5){\scalebox{2}{barriers}}; 
    \node at (2.5,2.5){\scalebox{2}{homes}}; 
    \node at (1.5,-2.5){\scalebox{2}{conditions}};
\end{tikzpicture}
}
    \caption{The relation of the barriers and conditions.}
    \label{fig:barriers&condition}
\end{figure}

The next two results provide some technical information regarding the marked boxes, barriers, and conditions given a configuration. The first result gives us some cases in which a configuration satisfies a particular condition.
\begin{lemma}\label{lem: emptybarrier}
    Consider a configuration $\gamma$ and suppose that, for some $i\leq s$, the $i^{\text{th}}$ marked box is at home. Then, we have that:
   \begin{itemize}
        \item if the $i^{\text{th}}$ barrier is empty, then $\gamma$ satisfies the $(s-i+1)$-condition; and
        \item if the $(i+1)^{\text{th}}$ barrier is empty, then $\gamma$ satisfies the $(s-i)$-condition.
    \end{itemize}
\end{lemma}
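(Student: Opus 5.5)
The proof is a direct unwinding of the definitions. The plan is to first translate barrier indices into condition indices, and then check the two bullet points by counting marked boxes on each side of the relevant barrier.

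\emph{Step 1: dictionary between barriers and conditions.} With $b_0=k+1-s$, the $m^{\text{th}}$ barrier (counting from the left, $1\leq m\leq s+1$) sits at position $b_0+2(m-1)=k-s+2m-1$. The $j$-condition uses the barrier at position $s+k+1-2j$. Equating the two positions gives $m=s+1-j$. So the $m^{\text{th}}$ barrier is exactly the barrier of the $(s+1-m)$-condition, which is the correspondence drawn in Figure~\ref{fig:barriers&condition}.

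We also note that all barrier positions lie in $[1,n]$. Indeed, $s\leq k$ gives $b_0\geq 1$, and $k\leq\lfloor (n-1)/2\rfloor$ gives $s+k+1\leq n$. Hence each condition makes sense for every configuration. Finally, the $i^{\text{th}}$ home is the position $b_0+2i-1$, which sits strictly between the $i^{\text{th}}$ barrier at $b_0+2i-2$ and the $(i+1)^{\text{th}}$ barrier at $b_0+2i$.

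\emph{Step 2: first bullet.} Suppose $\gamma_i=b_0+2i-1$ and the $i^{\text{th}}$ barrier at $b_0+2i-2$ is empty.
\begin{itemize}
\item Since $\gamma$ is strictly increasing, $\gamma_1,\dots,\gamma_{i-1}\leq b_0+2i-2$. None of them equals $b_0+2i-2$ because that barrier is empty, so all $i-1$ of them lie strictly to the left of the barrier.
\item The boxes $\gamma_i,\dots,\gamma_s$ are at positions $\geq b_0+2i-1$, so these $s-i+1$ boxes lie strictly to the right.
\end{itemize}
With $j=s-i+1$ we have $s-j=i-1$ marked boxes on the left, none at the barrier, and $j$ on the right. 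By Step 1 (with $m=i$), this is precisely the $(s-i+1)$-condition.

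\emph{Step 3: second bullet.} Suppose $\gamma_i=b_0+2i-1$ and the $(i+1)^{\text{th}}$ barrier at $b_0+2i$ is empty.
\begin{itemize}
\item The boxes $\gamma_1,\dots,\gamma_i$ are at positions $\leq b_0+2i-1$, so these $i$ boxes lie to the left of the barrier.
\item The boxes $\gamma_{i+1},\dots,\gamma_s$ are at positions $\geq b_0+2i$. They differ from $b_0+2i$ because that barrier is empty, so these $s-i$ boxes lie strictly to the right.
\end{itemize}
This is the $(s-i)$-condition, which corresponds to $m=i+1$ in Step 1.

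The only real obstacle is bookkeeping. The main points to get right are the index shift between the $m^{\text{th}}$ barrier and the $(s+1-m)$-condition, and the validity of all barrier positions, which is exactly where the hypothesis $s\leq k\leq\lfloor (n-1)/2\rfloor$ is used. Strict monotonicity of $\gamma$ together with the emptiness of the barrier does all the remaining work.
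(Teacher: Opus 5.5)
Your proof is correct and follows the same idea as the paper, which gives no written proof and only sketches the argument in Figure~\ref{fig:b0lemma}. That idea is that the at-home $i^{\text{th}}$ marked box sits strictly between the $i^{\text{th}}$ and $(i+1)^{\text{th}}$ barriers, so when the relevant barrier is empty there are $i-1$ (resp.\ $i$) marked boxes on its left and $s-i+1$ (resp.\ $s-i$) on its right; your correspondence between the $m^{\text{th}}$ barrier and the $(s+1-m)$-condition, together with your check that all barrier positions lie in $[1,n]$, supplies exactly the bookkeeping the paper leaves to the picture.
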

Note that if both barriers are empty, then $\gamma$ satisfies both the $(s-i)$ and $(s-i-1)$-conditions. The idea of the lemma is sketched in Figure~\ref{fig:b0lemma}.   

\begin{figure}[ht]
\centering
\scalebox{0.5}{
\begin{tikzpicture}
    \fill[DarkBlue!30] (4,0) rectangle (5,1);
    \fill[DarkBlue!30] (6,0) rectangle (7,1);
    \fill[DarkBlue!30] (8,0) rectangle (9,1);
    \fill[DarkBlue!30] (10,0) rectangle (11,1);
    \fill[DarkBlue!30] (12,0) rectangle (13,1);
    \fill[DarkBlue!30] (14,0) rectangle (15,1);
    \fill[DarkBlue!30] (16,0) rectangle (17,1);
    \fill[DarkBlue!30] (18,0) rectangle (19,1);
    \draw[step=1cm,black,very thin] (0,0) grid (23,1);
    \node at (2.5,0.5) {\scalebox{1.75}{$\cdots$}}; 
    \node at (5.5,0.5){\scalebox{1.75}{$\cdots$}};
    \node at (15.5,0.5){\scalebox{1.75}{$\cdots$}};
    \node at (20.5,0.5){\scalebox{1.5}{$\cdots$}}; 
    \node at (4.5,0.5){\scalebox{2}{$b_0$}};
    \node at (8.5,0.5){\scalebox{3}{$\times$}};
    \node at (11.5,0.5){\scalebox{3}{$\times$}};
    \node at (5.5,-2.5){\scalebox{2}{marked box at home}};
    \node at (6.5,-1.5){\scalebox{2}{barriers}}; 
    \node at (10.5,-0.5){\scalebox{2}{$\downarrow$}};
    \node at (12.5,-0.5){\scalebox{2}{$\downarrow$}};
    \node at (11.5, -1.1){\scalebox{2}{$\bigg\downarrow$}};
    \node at (10.5,-1.5){\scalebox{1.75}{$i^{\text{th}}$}};
    \node at (12.5,-1.5){\scalebox{1.75}{$\qquad (i+1)^{\text{th}}$}};
    \node at (11.5,-2.5){\scalebox{1.75}{$i^{\text{th}}$}};
    \node at (-1,2){\scalebox{2}{$\#$ of marked boxes}};
    \draw[|-|] (0,1.5) -- (11,1.5);
    \node at (5.5,2){\scalebox{1.75}{$i-1$}};
    \draw[|-|] (12,1.5) -- (23,1.5);
    \node at (17.5,2){\scalebox{1.75}{$s-i$}};
\end{tikzpicture}}
    \caption{Sketch of a configuration $\gamma$ illustrating the statements in Lemma~\ref{lem: emptybarrier}.}
    \label{fig:b0lemma}
\end{figure}
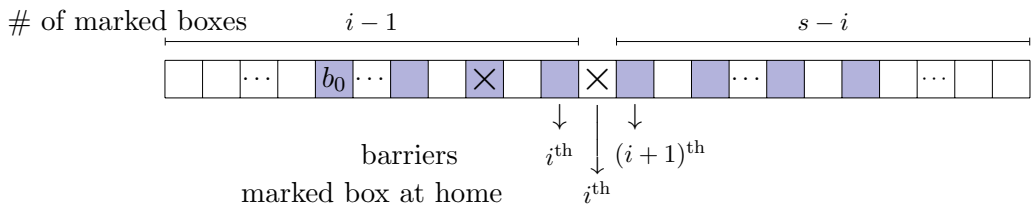

The second result gives us sufficient conditions for a configuration not to satisfy a particular condition.
\begin{lemma}\label{lem: notathome}
   Consider a configuration $\gamma$. Then, we have that:    
\begin{itemize}
        \item if the $i^{\text{th}}$ barrier has a marked box, then $\gamma$ does not satisfy the $(s-i+1)$-condition; and

        \item if the $j^{\text{th}}$ marked box is in the $i^{\text{th}}$ home and $j \neq i$, then $\gamma$ satisfies neither condition associated with the barriers adjacent to that marked box.
    \end{itemize}
    
\end{lemma}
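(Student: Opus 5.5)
The first item is immediate from the definition of the $j$-condition, and the second is a counting argument on how many marked boxes can lie to the left of each barrier adjacent to the $i^{\text{th}}$ home. Throughout, I use the positions fixed before Figure~\ref{fig:barriers&condition}. The $m^{\text{th}}$ barrier, counted from the left, sits at position $b_0+2(m-1)$ with $b_0=k+1-s$. The $m^{\text{th}}$ home sits at position $b_0+2m-1$, strictly between the $m^{\text{th}}$ and $(m+1)^{\text{th}}$ barriers.

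First I record the dictionary between barriers and conditions. Setting $c=s-m+1$ in the barrier position $s+k+1-2c$ of the $c$-condition gives $k-s+2m-1=b_0+2(m-1)$. So the $m^{\text{th}}$ barrier is the barrier of the $(s-m+1)$-condition. Unwinding the definition, $\gamma$ satisfies the $(s-m+1)$-condition if and only if both of the following hold:
\begin{itemize}
\item the $m^{\text{th}}$ barrier is not a marked box;
\item exactly $m-1$ marked boxes lie to its left (and then automatically $s-m+1$ lie to its right).
\end{itemize}
With this reformulation the first item is immediate: a marked $i^{\text{th}}$ barrier violates the requirement that there is no marked box at the barrier, so the $(s-i+1)$-condition fails.

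For the second item, suppose $\gamma_j=b_0+2i-1$ with $j\neq i$. The adjacent barriers are the $i^{\text{th}}$ (the $(s-i+1)$-condition, which needs exactly $i-1$ marks to its left) and the $(i+1)^{\text{th}}$ (the $(s-i)$-condition, which needs exactly $i$ marks to its left). Since $\gamma$ is increasing, the marks strictly left of position $b_0+2i-1$ are exactly $\gamma_1,\dots,\gamma_{j-1}$, and $\gamma_{j+1},\dots,\gamma_s$ lie strictly to its right. I may assume both barriers are unmarked, since otherwise the first item already gives the conclusion.

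The $i^{\text{th}}$ barrier, at position $b_0+2i-2$, is immediately left of the home. The marks strictly to its left are therefore exactly $\gamma_1,\dots,\gamma_{j-1}$, i.e.\ $j-1$ marks, and $j-1\ne i-1$, so the $(s-i+1)$-condition fails. The $(i+1)^{\text{th}}$ barrier, at position $b_0+2i$, is immediately right of the home. The marks strictly to its left are exactly $\gamma_1,\dots,\gamma_j$, i.e.\ $j$ marks, and $j\ne i$, so the $(s-i)$-condition fails.

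I do not expect a real obstacle here. The only delicate step is the index dictionary matching the $m^{\text{th}}$ barrier to the $(s-m+1)$-condition, and reducing to the case of unmarked barriers so that "strictly left of the barrier" equals "left of the home", up to including $\gamma_j$.
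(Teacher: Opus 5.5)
Your proof is correct and follows the paper's approach. The first item comes from the barrier--condition correspondence, and the second is the same count of marks on either side of the two adjacent barriers. The only cosmetic difference is that the paper counts $s-j\neq s-i$ marks to the right of the $(i+1)^{\text{th}}$ barrier, while you count $j\neq i$ marks to its left; you also make explicit the check that the $m^{\text{th}}$ barrier is at position $s+k+1-2(s-m+1)$, which the paper only conveys through its figure.
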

\begin{proof}
The first statement follows from the fact that the $(s-i+1)$-condition is the condition associated with the $i^{\text{th}}$ barrier, which contains a mark. This idea is sketched in the top picture in Figure~\ref{fig:sketchLemmab0IB}.

For the second statement, if either barrier is not empty, then their associated conditions are not satisfied. Assuming they are both empty, there are $j-1 \neq i -1$ marks to the left of the $i^{\text{th}}$ barrier, and $s-j \neq s - i$ marks to the right of the $(i+1)^{\text{th}}$ barrier. Since there needs to be $i - 1$ marks to the left of the $i^{\text{th}}$ barrier and $s-(i+1) + 1 = s - i$ marks to the right of the $(i+1)^{\text{th}}$ barrier for their respective conditions to be satisfied, neither condition is satisfied. We sketch the idea in the bottom picture in Figure~\ref{fig:sketchLemmab0IB}.

\begin{figure}[ht]
\centering
\scalebox{0.5}{
\begin{tikzpicture}
    \fill[DarkBlue!30] (4,0) rectangle (5,1);
    \fill[DarkBlue!30] (6,0) rectangle (7,1);
    \fill[DarkBlue!30] (8,0) rectangle (9,1);
    \fill[DarkBlue!30] (10,0) rectangle (11,1);
    \fill[DarkBlue!30] (12,0) rectangle (13,1);
    \fill[DarkBlue!30] (14,0) rectangle (15,1);
    \fill[DarkBlue!30] (16,0) rectangle (17,1);
    \fill[DarkBlue!30] (18,0) rectangle (19,1);
    \draw[step=1cm,black,very thin] (0,0) grid (23,1);
    \node at (2.5,0.5){\scalebox{1.75}{$\cdots$}}; 
    \node at (5.5,0.5){\scalebox{1.75}{$\cdots$}};
    \node at (15.5,0.5){\scalebox{1.75}{$\cdots$}};
    \node at (20.5,0.5){\scalebox{1.5}{$\cdots$}}; 
    \node at (4.5,0.5){\scalebox{2}{$b_0$}};
    \node at (8.5,0.5){\scalebox{3}{$\times$}};
    \node at (11.5,0.5){\scalebox{3}{$\times$}};
    \node at (4.5,-1.5){\scalebox{2}{barriers}}; 
    \node at (4.5,-2.5){\scalebox{2}{marked box}};
    \node at (8.5,-0.5){\scalebox{2}{$\downarrow$}};
    \node at (8.5,-1.5){\scalebox{2}{$i^{\text{th}}$}};
    \node at (8.5,-2.5){\scalebox{2}{$j^{\text{th}}$}};
    \node at (-1,2){\scalebox{2}{$\#$ of marked boxes}};
    \draw[|-|] (0,1.5) -- (8,1.5);
    \node at (5.5,2){\scalebox{1.75}{$j-1$}};
    \draw[|-|] (9,1.5) -- (23,1.5);
    \node at (17.5,2){\scalebox{1.75}{$s-j$}};
\end{tikzpicture}
}

\bigskip

\scalebox{0.5}{
\begin{tikzpicture}
    \fill[DarkBlue!30] (4,0) rectangle (5,1);
    \fill[DarkBlue!30] (6,0) rectangle (7,1);
    \fill[DarkBlue!30] (8,0) rectangle (9,1);
    \fill[DarkBlue!30] (10,0) rectangle (11,1);
    \fill[DarkBlue!30] (12,0) rectangle (13,1);
    \fill[DarkBlue!30] (14,0) rectangle (15,1);
    \fill[DarkBlue!30] (16,0) rectangle (17,1);
    \fill[DarkBlue!30] (18,0) rectangle (19,1);
    \draw[step=1cm,black,very thin] (0,0) grid (23,1);
    \node at (2.5,0.5) {\scalebox{1.75}{$\cdots$}}; 
    \node at (5.5,0.5){\scalebox{1.75}{$\cdots$}};
    \node at (15.5,0.5){\scalebox{1.75}{$\cdots$}};
    \node at (20.5,0.5){\scalebox{1.5}{$\cdots$}}; 
    \node at (4.5,0.5){\scalebox{2}{$b_0$}};
    \node at (8.5,0.5){\scalebox{3}{$\times$}};
    \node at (11.5,0.5){\scalebox{3}{$\times$}};
    \node at (6.5,-2.5){\scalebox{2}{marked box}};
    \node at (6.5,-1.5){\scalebox{2}{barriers}}; 
    \node at (10.5,-0.5){\scalebox{2}{$\downarrow$}};
    \node at (12.5,-0.5){\scalebox{2}{$\downarrow$}};
    \node at (10.5,-1.5){\scalebox{2}{$i^{\text{th}}$}};
    \node at (12.5,-1.5){\scalebox{1.75}{$\quad (i+1)^{\text{th}}$}};
    \node at (11.5,-2.5){\scalebox{2}{$j^{\text{th}}$}};
    \node at (-1,2){\scalebox{2}{$\#$ of marked boxes}};
    \draw[|-|] (0,1.5) -- (11,1.5);
    \node at (5.5,2){\scalebox{1.75}{$j-1$}};
    \draw[|-|] (12,1.5) -- (23,1.5);
    \node at (17.5,2){\scalebox{1.75}{$s-j$}};
\end{tikzpicture}
}
    \caption{Sketch of two generic configurations illustrating the statements in Lemma~\ref{lem: notathome}.}
    \label{fig:sketchLemmab0IB}
\end{figure}
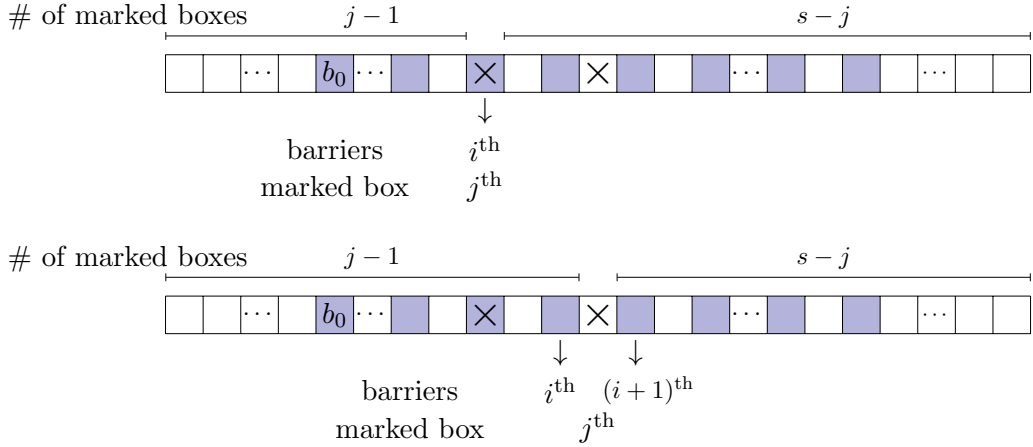
\end{proof}

Lemmas~\ref{lem: emptybarrier} and~\ref{lem: notathome} provide the relation between the $j$-conditions and the not at home marked boxes, allowing us to state the following result.

\begin{prop}\label{thm:conditions and marked boxes}
    For $s\leq k \leq \left\lfloor \frac{n-1}{2}\right\rfloor$ and $b_0 = k-s+1$, $T(n,s,i,b_0)$ also counts the number of configurations $\gamma$ with $n$ boxes in total and $s$ marked ones that satisfy exactly $s+1-i$ conditions.
\end{prop}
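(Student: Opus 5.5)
The plan is to turn the statement into a claim about the positions of the marked boxes relative to their homes, and then count failing conditions directly. Fix a configuration $\gamma=(\gamma_1,\ldots,\gamma_s)$ and put $b_0=k-s+1\geq 1$, using $s\le k$. The $i^{\text{th}}$ barrier sits at position $b_0+2(i-1)$ for $1\le i\le s+1$, and the $j^{\text{th}}$ home is $b_0+2j-1$. I will show that $\gamma$ satisfies exactly $s+1-nat(\gamma,b_0)$ conditions; the statement then follows at once from the definition of $T(n,s,i,b_0)$.

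\emph{Step 1: rewrite each condition as an inequality.} The condition attached to the $i^{\text{th}}$ barrier is the $(s-i+1)$-condition. It asks that this barrier be empty, with $i-1$ marked boxes to its left and $s-i+1$ to its right. Since $\gamma$ is increasing, this holds exactly when $\gamma_{i-1}<b_0+2i-2<\gamma_i$, with the conventions $\gamma_0=-\infty$ and $\gamma_{s+1}=+\infty$. This is the content behind Lemmas~\ref{lem: emptybarrier} and~\ref{lem: notathome}. Now set the displacement $d_j:=\gamma_j-(b_0+2j-1)$, with $d_0=-\infty$ and $d_{s+1}=+\infty$. The $j^{\text{th}}$ marked box is at home if and only if $d_j=0$. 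The inequality above becomes
$$d_{i-1}\le 0\le d_i,$$
because the $d_j$ are integers. Finally, $\gamma_{j+1}\ge\gamma_j+1$ gives $d_{j+1}\ge d_j-1$, so the sequence $d_0,d_1,\ldots,d_{s+1}$ never drops by more than $1$ in a single step.

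\emph{Step 2: biject failing conditions with not-at-home boxes.} There are $s+1$ consecutive pairs $(i-1,i)$, one for each condition. A pair fails exactly when $d_{i-1}>0$ or $d_i<0$. To each $j$ with $d_j\neq 0$, assign the pair $(j,j+1)$ if $d_j>0$ and the pair $(j-1,j)$ if $d_j<0$; in either case that pair fails. This assignment is injective. A collision would require $d_j>0$ and $d_{j+1}<0$, which is impossible since $d_{j+1}\ge d_j-1\ge 0$. It is also surjective onto the failing pairs. If the pair $(i-1,i)$ fails, then either $d_{i-1}>0$, where $i-1\ne 0$ because $d_0=-\infty$, or $d_i<0$, where $i\neq s+1$ because $d_{s+1}=+\infty$. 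In both cases the pair is the image of the corresponding nonzero entry. Hence the number of failing conditions is $nat(\gamma,b_0)$, and the number satisfied is $s+1-nat(\gamma,b_0)$.

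The main obstacle is Step 1: the translation of the $(s-i+1)$-condition into $d_{i-1}\le0\le d_i$ must be checked carefully, including the cases of a mark sitting on a barrier and of the boundary barriers $i=1$ and $i=s+1$. After that, the drop-by-at-most-one property makes the bijection routine. One should also confirm that the hypothesis $k\le\lfloor (n-1)/2\rfloor$ forces all barriers to lie inside $[n]$: the last barrier is at $b_0+2s=k+s+1\le n$. Otherwise a condition attached to a barrier outside the row would not make sense.
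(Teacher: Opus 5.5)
Your proof is correct, but it follows a different route from the paper's.

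The paper argues dynamically. It starts from the unique configuration with every marked box at home, which satisfies all $s+1$ conditions. It then moves marked boxes one position at a time, using a three-case analysis to claim that each move changes $nat$ and the number of satisfied conditions by opposite amounts. The third case (a not-at-home box moving to another non-home position) is left to the reader. The argument also relies, without checking, on every configuration being reachable by such moves in a way compatible with the cases.

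You argue statically instead. You rewrite the condition at the $i^{\text{th}}$ barrier as $d_{i-1}\le 0\le d_i$ and use $d_{j+1}\ge d_j-1$. This gives an explicit bijection between not-at-home marked boxes and failing conditions. You therefore get the pointwise identity that every configuration satisfies exactly $s+1-nat(\gamma,b_0)$ conditions, so the two sets of configurations coincide rather than merely having equal size.

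Your version is shorter and fully checkable. It handles marks on barriers and the boundary barriers (via $d_0=-\infty$, $d_{s+1}=+\infty$) uniformly, and it makes Lemmas~\ref{lem: emptybarrier} and~\ref{lem: notathome} precise. The paper's local-move picture is more informal, but it does show how a single displaced box interacts with its two neighboring barriers. That is the same phenomenon your drop-by-at-most-one property captures.
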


\begin{proof}
Let $\gamma$ be the unique configuration in $\mathcal{T}(n,s,0,b_0)$. Then, all the marked boxes are at home, and $\gamma$ satisfies all the $j$-conditions, with $0\leq j\leq s$, and the statement is true. 

Now, suppose that $i>0$. A configuration $\beta\in \mathcal{T}(n,s,i,b_0)$ can be obtained from a configuration $\lambda \in \mathcal{T}(n,s,0,b_0)$ by moving the at home marked boxes to positions so that they are no longer at home. We assume the marked box moves one box at a time. Then, we have the following relation between the number of not at home marked boxes and the number of $j$-conditions it satisfies in a configuration after moving one marked box:
\begin{enumerate}
    \item Moving an at home marked box increases the number of not at home marked boxes by 1 and decreases the number of $j$-conditions it satisfies by 1. 

    Suppose we move the $i^{\text{th}}$ marked box. Since it is at home, it is between the $i^{\text{th}}$ and the $(i+1)^{\text{th}}$ barriers. Moving it to either of the barriers increases the number of 'not at home' marked boxes by 1, since the other marked boxes remain in the same position. Moreover, the new configuration does not satisfy the $(s+1-i)$-condition as it has one box in the barrier. 

    \item Moving a not at home marked box into a home decreases the number of not at home marked boxes by 1 and increases the number of $j$-conditions it satisfies by 1.

    Suppose we move the $i^{\text{th}}$ marked box. It was either in the $i^{\text{th}}$ barrier or the $(i+1)^{\text{th}}$ barrier to start. Either way, the starting configuration does not satisfy the condition associated with the barrier the mark was originally in. Now, since there are $i-1$ marks before this mark and $n-i$ marks after it, moving this mark into its home means that the condition associated with the barrier the mark was originally in is now satisfied. This means that the not at home count decreased by 1 with this move, and the number of conditions satisfied increased by 1.

    \item Moving a not at home marked box in a way that continues to be not at home keeps the number of $j$-conditions and the number of not at home marked boxes the same.  

    This case corresponds to moving a marked box from a barrier to an empty box that is not its home, or vice versa. We leave it to the reader to verify this claim.

\end{enumerate}
\end{proof}

Therefore, we have the following interpretation of $K_{s+1-l}$.
\begin{cor}
For $s\leq k \leq \left\lfloor \frac{n-1}{2}\right\rfloor$, $0 \leq l \leq s$, and $b_0 = k-s+1$,
$$
K_{s+1-l} = \displaystyle{\sum_{i=0}^l T(n,s,i,b_0)}.
$$
\end{cor}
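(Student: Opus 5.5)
This corollary follows directly from Proposition~\ref{thm:conditions and marked boxes}, once the two counts are set up carefully. By definition, $K_{s+1-l}$ counts the configurations with $n$ boxes and $s$ marked ones that satisfy \emph{at least} $s+1-l$ of the $i$-conditions, $0\leq i\leq s$. I would partition this set according to the exact number $t$ of conditions satisfied, which gives
\[
K_{s+1-l}=\sum_{t=s+1-l}^{s+1} \#\{\gamma : \gamma \text{ satisfies exactly } t \text{ conditions}\}.
\]
Then I would reindex with $t=s+1-i$, so that $i$ runs from $0$ to $l$.

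For each fixed $i$ with $0\leq i\leq l\leq s$, Proposition~\ref{thm:conditions and marked boxes} (with $b_0=k-s+1$ and $s\leq k\leq\lfloor\frac{n-1}{2}\rfloor$) identifies the number of configurations satisfying exactly $s+1-i$ conditions with $T(n,s,i,b_0)$. Substituting this into the sum yields $\sum_{i=0}^{l}T(n,s,i,b_0)$.

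Two small points remain to be checked.

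\begin{itemize}
\item The hypotheses match those of the proposition. Both require $s\leq k\leq\lfloor\frac{n-1}{2}\rfloor$ and $b_0=k-s+1\geq 1$, so all barrier positions $b_0+2m$, $0\leq m\leq s$, lie in $[1,n]$. Indeed, the last barrier is at $k+s+1\leq n$.
\item The sets counted by $T(n,s,i,b_0)$ for different $i$ are disjoint, since $nat(\gamma,b_0)$ is a single number. This makes the partition by $i$ consistent with the partition by the exact number of conditions.
\end{itemize}

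The only real content is in the proposition itself, namely the fact that exactly $s+1-nat(\gamma,b_0)$ conditions hold. The corollary is just summation over the level sets, so I expect no obstacle beyond keeping the index shift $t=s+1-i$ and the range $0\leq i\leq l$ straight.
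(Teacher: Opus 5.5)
Your argument is correct and is essentially how the paper obtains this corollary: split the configurations satisfying at least $s+1-l$ conditions according to the exact number $s+1-i$ of conditions they satisfy, $0\le i\le l$, and replace each level count by $T(n,s,i,b_0)$ using Proposition~\ref{thm:conditions and marked boxes}. The paper states the corollary as an immediate consequence of that proposition without further argument. Your disjointness remark is harmless but not needed, since only the numerical equality at each level is used.
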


Our next goal is to prove the following closed formula for $T(n,s,i,b_0)$. 
\begin{thm}\label{thm: closed form of $T(n,i)$}
Let $s\leq \left\lfloor \frac n2 \right\rfloor$. For $i=0$, $T(n,s,0,b_0) = 1$, and for $0< i \leq s$, 
$$
T(n,s,i,b_0) = \binom{n}{i}-\binom{n}{i-1}.
$$
\end{thm}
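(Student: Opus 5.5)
The plan is to treat $T(n,s,i,b_0)$ purely as a counting problem. We count increasing sequences $\gamma=(\gamma_1,\ldots,\gamma_s)$ in $[n]$ for which exactly $i$ indices $j$ satisfy $\gamma_j\neq h_j$, where $h_j:=b_0+2j-1$ is the $j^{\text{th}}$ home. We then show that this count equals the ballot number $\binom{n}{i}-\binom{n}{i-1}$. We always work with $b_0\geq 1$ and $h_s=b_0+2s-1\leq n$, which is guaranteed by $s\leq k\leq\lfloor\frac{n-1}{2}\rfloor$ and $b_0=k-s+1$.

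The case $i=0$ is immediate, since $\gamma=(h_1,\ldots,h_s)$ is forced. For $i>0$ the argument has two steps.

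\emph{First step: reduce to a single value of $b_0$.} I would construct a bijection $\mathcal{T}(n,s,i,b_0)\to\mathcal{T}(n,s,i,b_0+1)$, valid whenever both home sets fit inside $[n]$. The obvious candidate translates every mark by one box to the right. This fails only for configurations with a mark in box $n$, so the map must be corrected there. I would try a cyclic version: rotate all positions by one modulo $n$, and when a mark wraps from $n$ to $1$, relabel the marks so they stay in increasing order. One then has to check that the at-home count is preserved; the slack between consecutive homes (every other box) is what should make this work. If rotation turns out badly behaved, the fallback is to prove directly that the generating function $\sum_\gamma x^{nat(\gamma,b_0)}$ is independent of $b_0$. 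For this I would decompose by the set $A$ of at-home indices. Fixing the marks in $A$ cuts the row into independent segments, and in each segment the remaining marks must avoid their own homes, which gives a product of smaller counts of the same type.

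\emph{Second step: take $b_0=1$ and induct.} With $b_0=1$ the homes are the even positions $2,4,\ldots,2s$, and I would argue by induction on $n$ by looking at the last box.
\begin{itemize}
\item If box $n$ is unmarked and $n>2s$, deleting it gives a bijection with $\mathcal{T}(n-1,s,i,1)$.
\item If box $n$ is marked and $n\neq 2s$, that mark is not at home, and deleting it leaves a configuration in $\mathcal{T}(n-1,s-1,i-1,1)$.
\end{itemize}
This yields $T(n,s,i)=T(n-1,s,i)+T(n-1,s-1,i-1)$ away from the boundary. The ballot numbers $\binom{n}{i}-\binom{n}{i-1}$ satisfy the matching Pascal recurrence, with no dependence on $s$. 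So it remains to check the boundary $n=2s$ or $n=2s+1$ separately. There the last box may be a home, and I would use the first step to move to $b_0=0$ or $b_0=2$ and peel off the first box instead.

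I expect the first step, the $b_0$-independence, to be the main obstacle. This matches the paper's remark that the parameters $k$ and $s$ drop out of the right-hand side only through a nontrivial mechanism. The induction in the second step is routine once that independence is established.
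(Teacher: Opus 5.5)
Your plan has two genuine gaps. Contrary to your expectation, the second one is at least as serious as the first.

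\emph{First gap: $b_0$-independence is not established.} The cyclic rotation does not preserve $nat$. When the mark in box $n$ wraps to box $1$, every other mark moves one box to the right but its index increases by one. The home of index $j+1$ with respect to $b_0+1$ lies three boxes to the right of the home of index $j$ with respect to $b_0$, so at-home statuses get scrambled. For example, take $n=7$ and $s=2$. The configuration $\gamma=(2,7)$ has $nat=1$ for $b_0=1$ (homes $2,4$), while its rotation $(1,3)$ has $nat=2$ for $b_0=2$ (homes $3,5$).

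The fallback does not prove independence either. Once a nonempty at-home set $A$ is fixed, the initial segment has length $b_0+2(\min A-1)$ with homes at offset $b_0$. The final segment has length $n-b_0-2\max A+1$. So the individual products genuinely depend on $b_0$. Only their sum over $A$ is invariant, and that is a convolution identity for ballot numbers you would still have to prove. The term $A=\emptyset$ is not decomposed at all. What is missing is an actual $nat$-preserving bijection. The paper uses the flipped block:
\begin{itemize}
\item cut after the last mark $j_0$ with $n-\gamma_{j_0}>2(s-j_0)$;
\item reflect the tail of $m=s-j_0$ marks to the front via $\gamma_j\mapsto n+1-\gamma_j$;
\item shift the remaining marks right by $2m+1$.
\end{itemize}

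\emph{Second gap: the boundary $n=2s$ is not handled.} Here the homes are $2,4,\dots,2s$, so the last box is a home, and this is where most of the paper's work goes. Your fix does not work:
\begin{itemize}
\item The value $b_0=2$ does not fit in $[2s]$.
\item For $b_0=0$ the first box is itself the home of the first mark. Deleting it when it is unmarked leaves a mark with no home, which is not a configuration of type $\mathcal{T}$.
\item If you instead peel off the end that is not a home (box $1$ when $b_0=1$, box $2s$ when $b_0=0$), the unmarked case gives $s$ marks in $2s-1$ boxes with homes $1,3,\dots,2s-1$. This lies outside the range $s\le\lfloor n/2\rfloor$ of your induction and has homes at both ends, so end-peeling cannot continue.
\end{itemize}

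Two parts of your Step 2 are correct and match the paper's recursion proposition: the last-box recurrence for $n\geq 2s+1$, and the observation that $\binom{n}{i}-\binom{n}{i-1}$ satisfies the same Pascal recurrence independently of $s$. But the induction does not close without the values $T(2s,s,i)$ for all $1\le i\le s$.

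The paper obtains these values from two extra ingredients. The first is the reflected-block bijection $T(2i,i,i)=T(2i-1,i-1,i-1)$. The second is the reduction $T(n,s,i)=T(n,i,i)$, proved by splitting at the last at-home mark, concatenation, and $T(2l,l,l-1)=T(2l,l-1,l-1)$. This reduction moves every boundary instance with $i<s$ into the regime $n>2i$, where your recurrence applies.
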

\begin{remark}
For $n$ odd, we note that $s= \left\lfloor \frac n2 \right\rfloor$ implies that $s =\left\lfloor \frac{n-1}{2} \right\rfloor$. For $n$ even, $s =\left\lfloor \frac n2 \right\rfloor$ implies that $b_0=1$ and so exactly half of the boxes are marked, and the barriers are located in alternating positions starting at the $1^{\text{st}}$ position. Thus, we consider the $(s+1)^{\text{th}}$ home to be the last box in the configuration, so that the combinatorial model makes sense and we can count the number of configurations in this particular scenario, which will be relevant later. 
\end{remark}
Before proving Theorem~\ref{thm: closed form of $T(n,i)$}, we finish the proof of Equation~\eqref{eq:binomial_identity} in Theorem~\ref{thm:binomial_identity}, which we have reduced to the following result.
\begin{thm*}
For $0 \leq l \leq s \leq \left\lfloor \frac{n-1}{2}\right\rfloor$,   $\displaystyle{K_{s+1-l} = \binom{n}{l} }.$
\end{thm*}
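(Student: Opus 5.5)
The plan is to combine the preceding corollary with Theorem~\ref{thm: closed form of $T(n,i)$} and telescope. First I will check that the hypotheses match. The statement assumes $0\le l\le s\le \lfloor\frac{n-1}{2}\rfloor$, and in our model $s\le k\le\lfloor\frac{n-1}{2}\rfloor$ with $b_0=k-s+1$. The corollary then applies directly and gives
$$K_{s+1-l}=\sum_{i=0}^{l}T(n,s,i,b_0).$$
Theorem~\ref{thm: closed form of $T(n,i)$} requires $s\le\lfloor n/2\rfloor$. This holds because $\lfloor\frac{n-1}{2}\rfloor\le\lfloor\frac n2\rfloor$, and since $i$ ranges only up to $l\le s$, every term is covered by that theorem.

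Next I substitute the closed formula. The $i=0$ term is $T(n,s,0,b_0)=1=\binom n0$, and each term with $1\le i\le l$ is $\binom ni-\binom n{i-1}$. The sum telescopes:
$$K_{s+1-l}=\binom n0+\sum_{i=1}^{l}\left(\binom ni-\binom n{i-1}\right)=\binom nl .$$
The case $l=0$ needs a separate check. There the sum is only $T(n,s,0,b_0)=1=\binom n0$, which agrees with $K_{s+1}=1$: the unique all-at-home configuration is the only one satisfying all $s+1$ conditions.

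Finally, I combine this with Lemma~\ref{lem:relation_coeffs_BK} and the formula for $B_j$. This gives $\sum_j B_j=\sum_{l=0}^{s}\binom nl$, which is Theorem~\ref{thm:binomial_identity}. By the remark on the symmetry in $s$ and $k$, it also gives Theorem~\ref{prop: binomial identity}.

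The reduction itself is routine bookkeeping. The real obstacle is Theorem~\ref{thm: closed form of $T(n,i)$}, which the paper proves afterwards and which I take as given here. The only delicate point in this step is confirming that the ranges of $s$, $k$ and $b_0$ in the corollary and in that theorem are compatible for every $l\le s$.
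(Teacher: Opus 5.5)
Your proposal is correct and follows the paper's proof exactly. You rewrite $K_{s+1-l}$ as $\sum_{i=0}^{l}T(n,s,i,b_0)$ using the corollary, substitute $T(n,s,0,b_0)=1$ and $T(n,s,i,b_0)=\binom{n}{i}-\binom{n}{i-1}$, and telescope to $\binom{n}{l}$; your checks on the parameter ranges and the separate $l=0$ case are harmless additions that the paper leaves implicit.
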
 

\begin{proof}[Proof of Theorem~\ref{thm: final identity}]
We have that
    \begin{align*}
K_{s+1-l} &= \sum_{i=0}^l T(n,s,i,b_0) 
= 1 + \sum_{i=1}^l T(n,s,i,b_0) = 1 + \sum_{i=1}^l \left[\binom{n}{i}-\binom{n}{i-1}\right] = \binom{n}{l}, 
    \end{align*}
where we use that we have a telescopic summation and all the terms except one cancel. 
\end{proof}

\subsection{Independency of $b_0$}\label{subsect: independency of b_0}

In this subsection, we first show that, for computing $T(n,s,i,b_0)$, we can assume that the barriers start at $b_0=1$. We start with some definitions. 

Given a configuration $\gamma= (\gamma_1,\ldots,\gamma_s)$ and $1\leq j\leq s$, let $d_j$ be the number of boxes from the position of the $j^{\text{th}}$ marked box to the end, $d_j := n-\gamma_j$, and let $a_j := 2(s-j)$. Moreover, let $j_0$ be the rightmost marked box such that $d_j >a_j$, with $j_0=0$ if no such box exists. Then, we define the \demph{flipped block} of $\gamma$, $FB(\gamma)$, as the set of boxes to the right of the $(j_0+1)^{\text{th}}$ marked box (including that marked box), that is $FB(\gamma) = (\gamma_{j_0+1}, \ldots, \gamma_s)$, of length $n-\gamma_{j_0+1}+1$. Denote by $m:=s-j_0$ the number of marked boxes on it. Finally, we denote by $\gamma^0$ the marked boxes not in the flipped block, that is $\gamma = \gamma^0\cup FB(\gamma)$, where the union of sentences is defined as the concatenation of the two sentences. Note that if $j_0=0$, then $FB(\gamma) = \gamma$, and that if $j_0=s$, then $FB(\gamma)=\emptyset$. 

Similarly, we can define the \demph{reverse flipped block} of $\gamma$, $FB'(\gamma)$ in the following way. Define $d_j'$ to be the index of the $j^{\text{th}}$ marked box, i.e. $d_j' := \gamma_j-1$, and let $a_j':= 2(j-1)$. Now, we define $j_0'$ to be the leftmost marked box such that $d_j' > a_j'$. If no such box exists, then $j_0' = n+1$. We then define $FB'(\gamma)$ as the set of boxes to the left of the $(j_0'-1)^{\text{th}}$ marked box, including that marked box. Denote by $m' := j_0' - 1$ the number of marked boxes in the reverse flipped block, and denote by $\gamma'^0$ the boxes in $\gamma'$ not in the reverse flipped block. Thus, $\gamma = FB'(\gamma) \cup \gamma'^0$. Note that if $j_0' = 1$, then $FB'(\gamma) = \emptyset$, and that if $j_0' = n+1$, then $FB'(\gamma) = \gamma$.

The following result provides a bijection between configurations with the barriers starting at $b_0$ and configurations with the barriers starting at $b_0+1$.
\begin{prop}\label{prop:independency of b_0}
 For $0\leq i \leq s \leq \left\lfloor \frac{n-1}{2} \right\rfloor$, we have that $T(n,s,i,b_0) = T(n,s,i,b_0+1)$. In particular, $T(n,s,i,b_0) = T(n,s,i,1)$. 
\end{prop}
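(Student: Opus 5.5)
We will prove the statement by constructing, for fixed $n$, $s$, $i$ and $b_0$ with $b_0+2s\leq n$ (so that the $s+1$ barriers starting at $b_0+1$ still fit in the row), an explicit bijection
$$\Phi:\mathcal{T}(n,s,i,b_0)\longrightarrow \mathcal{T}(n,s,i,b_0+1)$$
that preserves the number $nat(\gamma,\cdot)$ of marked boxes not at home. The second assertion, $T(n,s,i,b_0)=T(n,s,i,1)$, then follows by iterating the first one (or its inverse) from $b_0$ down to $1$. The bijection is the \emph{flipped block} operation defined just before the statement.

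\textbf{Definition of $\Phi$.} Given $\gamma$, write $\gamma=\gamma^0\cup FB(\gamma)$. We keep $\gamma^0$ fixed and reflect the flipped block inside its own window: the positions $\gamma_{j_0+1},\dots,\gamma_s$ are sent to
$$n+\gamma_{j_0+1}-\gamma_s,\ \dots,\ n+\gamma_{j_0+1}-\gamma_{j_0+1},$$
reindexed increasingly. Equivalently, the gaps of the block are reversed and the block is pushed against the right end of the row. The inverse map is built in the same way from the reverse flipped block $FB'$, using the thresholds $a'_j=2(j-1)$ and reflecting the left block.

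\textbf{Step 1: the home statistic is preserved.} The marked boxes with index $\leq j_0$ satisfy $d_j>a_j$, that is, they lie strictly before the tail region of length $2(s-j)$. I will check that such a box is at home with respect to $b_0$ if and only if it is at home with respect to $b_0+1$ after the shift. This is the point I am least sure of, and it may instead require choosing $j_0$ so that no box of $\gamma^0$ is at home at all. Inside the block, every marked box satisfies $d_j\leq 2(s-j)$, so the block is ``tight'' against the right end. Homes for $b_0$ sit at $b_0+2j-1$. After reflection, the $j$-th box of the block (counted from the right) lands at a position congruent in parity to the new homes $b_0+2j$. A direct computation of the offsets, in the form "$\gamma_j$ equals home $j$ for $b_0$" versus "the image equals home $j'$ for $b_0+1$", should show that at-home boxes correspond to at-home boxes.

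\textbf{Step 2: $\Phi$ is invertible.} I must show that $\Phi(\gamma)$ has reverse flipped block exactly equal to the image of the original block, so that $FB'$ recovers it, and that $\Phi^{-1}\circ\Phi=\mathrm{id}$. The key is a maximality argument: $j_0$ was the rightmost index violating the tail bound. After reflection, the block satisfies the complementary left-sided bounds $d'_j\leq a'_j$ measured from the block's start, and the first box of $\gamma^0$ to its left violates them. This step, namely the compatibility of the two threshold definitions $d_j>a_j$ and $d'_j>a'_j$ under reflection, is the main obstacle.

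If this fails, I would try first to verify the claim computationally for small $n$, and then adjust the cut index $j_0$, for instance using $\geq$ instead of $>$, until $FB$ and $FB'$ become mutually inverse. As a fallback, there is a non-bijective route: show that both $T(n,s,i,b_0)$ and $T(n,s,i,b_0+1)$ satisfy the same recursion in $n$, obtained by conditioning on whether the last box is marked, together with the same initial values.
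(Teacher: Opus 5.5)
Your map $\Phi$ breaks down at Step 1, because it keeps $\gamma^0$ fixed. Passing from $b_0$ to $b_0+1$ moves every home one box to the right: the $j$-th home is at $b_0+2j-1$, resp.\ $b_0+2j$. So a mark of $\gamma^0$ that keeps both its position and its index can never be at home for both values of $b_0$.

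Concretely, take $i=0$. The unique element of $\mathcal{T}(n,s,0,b_0)$ is $\gamma_j=b_0+2j-1$. Since $b_0+2s\le n$, we get $d_j=n-b_0-2j+1>2(s-j)=a_j$ for all $j$. Hence $j_0=s$, $FB(\gamma)=\emptyset$, and $\Phi(\gamma)=\gamma$, which has $nat=s$ with respect to $b_0+1$. The smallest instance is $n=5$, $s=1$, $b_0=1$: $\Phi$ sends $(2)$ to $(2)$, although $\mathcal{T}(5,1,0,2)=\{(3)\}$.

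Your Step 1 refers to ``the shift'', but no shift is part of your definition, and no parity bookkeeping inside the block can fix the $\gamma^0$ part. Your forward and inverse maps are also incompatible: you leave the reflected block at the right end, whereas $FB'$ reads off a block at the left end.

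The missing idea, which is what the paper's proof uses, is to move the block to the front and shift the rest. With $m=s-j_0$, set $\phi(\gamma_j)=1+d_j$ if $\gamma_j\in FB(\gamma)$ and $\phi(\gamma_j)=\gamma_j+2m+1$ otherwise. In words: cut off the last $2m+1$ boxes, reverse them, and put them in front. These boxes contain exactly the $m$ block marks, since block marks have $d_j\le 2m-2$ while $d_{j_0}>2m$. The map then works as follows.
\begin{itemize}
\item A mark $\gamma_j$ of $\gamma^0$ becomes the $(j+m)$-th mark and moves by the odd amount $2m+1$. So $\gamma_j=b_0+2j-1$ if and only if $\phi(\gamma_j)=(b_0+1)+2(j+m)-1$.
\item The block marks are not at home before: their homes lie to their left, by $d_j\le 2(s-j)$ and $b_0+2s\le n$.
\item They are not at home after either: $1+d_j$ is the $(s-j+1)$-th mark of $\phi(\gamma)$ and lies strictly left of its home $b_0+2(s-j+1)$.
\item Putting the block at the left end is exactly what makes $FB'$ recover it. The first $m$ marks of $\phi(\gamma)$ satisfy $d'_t=d_{s-t+1}\le 2(t-1)=a'_t$, while $d'_{m+1}=\gamma_{j_0}+2m>a'_{m+1}$, so $j'_0=m+1$.
\end{itemize}

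Finally, your fallback is not yet a proof. Both sides do satisfy $T(n,s,i,\cdot)=T(n-1,s,i,\cdot)+T(n-1,s-1,i-1,\cdot)$. However, the induction on $n$ bottoms out at $n=b_0+2s+1$. There the term $T(n-1,s,i,b_0+1)$ falls outside the range of the statement, since its last home is the last box. So the needed ``initial values'' are themselves a tight instance of the claim and require a separate argument.
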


\begin{proof}
Since the parameters $n$, $s$, and $i$ are fixed, we simplify the notation in this proof and denote by $\mathcal{T}(b_0)$ the set of all configurations $\gamma$ with $s$ marked boxes and length $n$ and such that $nat(\gamma)=i$ with respect to $b_0$. We show that $T(n,s,i,b_0) = T(n,s,i,b_0+1)$ by giving a bijection:
\begin{equation*}
    \begin{array}{rccc}
    \phi: & \mathcal{T}(b_0) & \longrightarrow & \mathcal{T}(b_0+1) \\
     & \gamma & \longmapsto & \gamma':= \phi(\gamma)
    \end{array}
\end{equation*}
Intuitively, $\phi$ reflects the pattern of marked boxes in the flipped block at the beginning of the configuration and moves the rest of the marked boxes to the right by $2m+1$ spots. For $\gamma = (\gamma_1,\ldots, \gamma_s)$, we define
\begin{equation*}
    \phi(\gamma_j):= \left\{ 
    \begin{array}{lcl}
    1+d_j, & \qquad & \text{if } \gamma_j\in FB(\gamma), \\
    \gamma_j +2m+1, & \qquad & \text{if } \gamma_j\notin FB(\gamma),
    \end{array}\right.
\end{equation*}
so that $\gamma'$ is the sequence obtained by reordering in increasing order the entries $\phi(\gamma_j)$.

Let us start by showing that $\phi$ is well-defined. That is, we show that $\gamma'$ is a configuration and that it has the same number of market boxes not at home as $\gamma$. If $FB(\gamma)=\gamma$, then $d(\gamma)=(d_1,\ldots, d_s)$ is a strictly decreasing sequence of integers, and so $\gamma' = (d_s+1,\ldots,d_1+1)$ is also a strictly increasing sequence of integers. 
If $FB(\gamma) \neq \gamma$, then $\gamma = \gamma^0 \cup FB(\gamma)$, with $\gamma^0 = \left(\gamma_1, \ldots, \gamma_{j_0}\right)$ and $FB(\gamma)$ nonempty sets. Then, $\phi(\gamma)= \phi(FB(\gamma)) \cup \phi(\gamma^0)$, where $\phi(FB(\gamma))$ corresponds to the marked boxes in the first $2m+1$ boxes of $\gamma'$ and $\phi(\gamma^0)$ corresponds to the marked boxes in the last $n-(2m+1)$ boxes of $\gamma'$. Thus, the values of $\phi(\gamma^0)$ and $\phi(FB(\gamma))$ do not overlap and $\gamma'$ is a configuration with $s$ marked boxes and length $n$. 

Next, we show that the number of marked boxes not at home is preserved under $\phi$. That is, if $nat(\gamma)=i$ with respect to $b_0$, then $nat(\gamma')=i$ with respect to $b_0+1$. Note that in $\gamma$ the $j^{\text{th}}$ home corresponds to the box in position $b_0+2j-1$ while in $\gamma'$ the $j^{\text{th}}$ home corresponds to the box in position  $b_0+2j$.

Consider the $i^{\text{th}}$ marked box in $\gamma$. Then, we have two cases:
\begin{itemize}
\itemsep0.2in
    \item \underline{Case 1:} $\gamma_i \in FB(\gamma)$. \\
    In this case, $\gamma_i$ is a marked box not at home in $\gamma$ and $\phi(\gamma_i)$ is also a marked box not at home in $\gamma'$. 
    By construction, the marked boxes in the flipped block satisfy that $d_{i} \leq a_{i}=2(s-i)$, and the $i^{\text{th}}$ home corresponds to the position $b_0+2i-1$, for which $d_{b_0+2i-1} = n-b_0-2i+1$. However, we have that $b_0\leq n-2s$ since there needs to be enough space to the right of $b_0$ to have $s$ barriers. Therefore, we have
    $$d_{b_0+2i-1} = n-b_0-2i+1 \geq n - (n-2s)-2i+1 = 2(s-i)+1 > 2(s-i)\geq d_i;
    $$
    This means that the $i^{\text{th}}$ home is strictly to the left of the $i^{\text{th}}$ marked box, which implies that the $i^{\text{th}}$ marked box cannot be at home. Similarly, for $\phi(\gamma_i)= \gamma'_{s-i+1}$ we have that 
    $$b_0 + 1 +2(s-i+1) = b_0+2(s-i)+3 \geq b_0+3 + d_i >d_i+1 = \phi(\gamma_i) = \gamma'_{s-i+1}.
    $$
    Thus, the position of the $(s-i+1)^{\text{th}}$ home is strictly to the right of the $\gamma'_{s-i+1}$ marked box, which corresponds to the image of $\gamma_i$. Therefore, $\phi(\gamma_i)$ is also not at home in $\gamma'$. 

    \item \underline{Case 2:} $\gamma_i \notin FB(\gamma)$. \\
    In this case, $\phi(\gamma_j)= \gamma'_{j+m}$ and $\gamma_j$ is at home if and only if $\gamma'_{j+m}$ is at home. 

    Note that $\gamma'_{j+m}= \gamma_j + 2m+1$. Moreover, $\gamma_j$ is at home if and only if $\gamma_j = b_0+2j-1$. Similarly, $\gamma'_{j+m}$ is at home if and only if $\gamma'_{j+m} = b_0+2(j+m)$. Putting these three observations together, we have that 
\begin{align*}
\gamma_j \text{ is at home } &\Longleftrightarrow \gamma_j = b_0+2j-1 \Longleftrightarrow \gamma_j+2m+1 = b_0+2j-1+2m+1 \\
&\Longleftrightarrow \gamma'_{j+m} = b_0+2(j+m) \Longleftrightarrow \gamma'_{j+m} \text{ is at home.}    
\end{align*}
\end{itemize}

The details about the inverse map are left to the reader.
\end{proof}

\begin{remark}
    For the rest of this section, we assume that $b_0=1$, and so we abbreviate $\mathcal{T}(n,s,i,b_0)$ to $\mathcal{T}(n,s,i)$ and $T(n,s,i,b_0)$ to $T(n,s,i)$.
\end{remark}

\subsection{Recursive relations}
In this subsection, we study some recursive relations. We start with the general case.   
\begin{prop}\label{thm: recursion of T}
    For $0<i \leq s \leq \lfloor\frac{n-1}{2}\rfloor$,
    \begin{equation}\label{eq: recursive formula}
    T(n,s,i) = T(n-1,s,i) + T(n-1,s-1,i-1).
    \end{equation}
\end{prop}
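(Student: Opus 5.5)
The plan is to prove \eqref{eq: recursive formula} by a direct bijection, splitting configurations according to whether the last box $n$ is marked. We work with $b_0=1$, as justified by Proposition~\ref{prop:independency of b_0}. Then the $j^{\text{th}}$ home is at position $b_0+2j-1=2j$ for $1\leq j\leq s$. The statistic $nat(\gamma)$ depends only on the positions $\gamma_j$ and these homes, not on the barriers. So I will treat $T(n,s,i)$ as the number of increasing sequences $1\leq\gamma_1<\cdots<\gamma_s\leq n$ with exactly $i$ indices $j$ such that $\gamma_j\neq 2j$.

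\emph{Case 1: box $n$ is not marked, i.e.\ $\gamma_s<n$.} Then $\gamma$ is a configuration on $n-1$ boxes with $s$ marked boxes. Its homes $2,4,\ldots,2s$ all lie in $[n-1]$, because $2s\leq n-1$. Since the homes do not depend on $n$, $nat(\gamma)$ is the same whether $\gamma$ is viewed on $n$ or on $n-1$ boxes. This gives a bijection with $\mathcal{T}(n-1,s,i)$.

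\emph{Case 2: box $n$ is marked, i.e.\ $\gamma_s=n$.} The $s^{\text{th}}$ marked box is at home only if $n=2s$, which the hypothesis $2s\leq n-1$ rules out. So this marked box is never at home. Deleting it together with box $n$ produces $(\gamma_1,\ldots,\gamma_{s-1})$, a configuration on $n-1$ boxes with $s-1$ marked boxes. Its homes $2,\ldots,2(s-1)$ are unchanged, so its number of not-at-home marked boxes is $i-1$. Conversely, appending a marked box at position $n$ to any element of $\mathcal{T}(n-1,s-1,i-1)$ gives an element of $\mathcal{T}(n,s,i)$ with $\gamma_s=n$. This case therefore contributes $T(n-1,s-1,i-1)$. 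Here $i\geq 1$ is automatic, consistent with the hypothesis $i>0$.

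Adding the two cases yields \eqref{eq: recursive formula}. The only delicate point, and the one I expect to need care, is checking that the parameters on the right-hand side are in the admissible range of the model. In Case 2, $s-1\leq\lfloor (n-2)/2\rfloor$ holds trivially. In Case 1, when $n$ is odd and $s=(n-1)/2$, the condition $s\leq\lfloor (n-1)/2\rfloor$ may fail for $n-1$ boxes, i.e.\ we have $s=\lfloor (n-1)/2\rfloor$ with $n-1$ even. This is exactly the boundary situation in the Remark following Theorem~\ref{thm: closed form of $T(n,i)$}: the last home is the last box and the final barrier falls off the row. There I would point out that $T(n-1,s,i)$ is still well defined, because $nat$ refers only to the homes $2j\leq 2s=n-1$, all of which lie inside the row. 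With that convention the bijection in Case 1 goes through unchanged.
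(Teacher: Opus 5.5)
Your proposal is correct and follows the paper's argument: delete the last box, and split according to whether that box is marked. You also check two points the paper leaves implicit. A mark at position $n$ cannot be at home because $2s\le n-1$, and $\mathcal{T}(n-1,s,i)$ remains meaningful in the boundary case $n-1=2s$.
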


\begin{proof}
The recursive formula in Equation~\eqref{eq: recursive formula} follows from the following counting argument. Given $\gamma \in \mathcal{T}(n,s,i)$, let $\gamma'$ be the configuration obtained by deleting the last box in $\gamma$. Then, either:
\begin{itemize}
    \item the last box of $\gamma$ is a marked box, and so $\gamma'\in \mathcal{T}(n-1,s-1,i-1)$; or
    \item the last box of $\gamma$ is not a marked box, and so $\gamma'\in \mathcal{T}(n-1, s, i)$. 
\end{itemize}
\end{proof}

It is important to note that Proposition~\ref{thm: recursion of T} does not apply to the case $T(2s,s,s)$ as $s > \lfloor \frac{n-1}{2}\rfloor$. Thus, we need to explore this case separately.
We begin with a technical lemma that states that for configurations in $\mathcal{T}(2i, i, i)$, the first marked box has to be in the first position. 
\begin{lemma}
For $\gamma=(\gamma_1,\ldots, \gamma_i)  \in \mathcal{T}(2i, i, i)$, with $i\geq 1$, $\gamma_1=1$.
\end{lemma}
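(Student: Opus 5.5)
With $b_0=1$ and $n=2i$, $s=i$, the $j^{\text{th}}$ home is position $2j$, so we have $i$ barriers at the odd positions and $i$ homes at the even positions $2,4,\ldots,2i$. The convention in the remark after Theorem~\ref{thm: closed form of $T(n,i)$} makes the last home the last box $2i$. A configuration $\gamma\in\mathcal{T}(2i,i,i)$ has every marked box not at home, meaning $\gamma_j\neq 2j$ for all $1\le j\le i$. The plan is to argue by contradiction: assume $\gamma_1\neq 1$ and force some marked box to be at home.

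If $\gamma_1\neq 1$, then $\gamma_1\ge 2$. Since $\gamma_1$ is not at home, $\gamma_1\neq 2$, so $\gamma_1\ge 3$. We then prove by induction on $j$ that $\gamma_j\ge 2j+1$ for every $j$. The base case $j=1$ is what we just showed. For the inductive step, suppose $\gamma_{j}\ge 2j+1$. Because the sequence is strictly increasing, $\gamma_{j+1}\ge \gamma_j+1\ge 2j+2=2(j+1)$. The $(j+1)^{\text{th}}$ marked box is not at home, so $\gamma_{j+1}\neq 2(j+1)$, and hence $\gamma_{j+1}\ge 2(j+1)+1$.

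Taking $j=i$ gives $\gamma_i\ge 2i+1>n=2i$, which is impossible because every position lies in $[n]$. So $\gamma_1=1$.

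The only delicate point is the home indexing. We must confirm that, with $b_0=1$, the $j^{\text{th}}$ home is $b_0+2j-1=2j$, including the last one at box $2i$ as the remark stipulates, so that the condition ``not at home'' really reads $\gamma_j\neq 2j$ for every $j$. Once that is checked, the induction is routine. In effect the argument is a squeeze: if $\gamma_1$ avoids position $1$, the home constraints push each subsequent marked box past its home, and the last marked box is pushed off the end of the row.
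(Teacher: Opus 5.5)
Your proof is correct and is essentially the paper's argument: the paper inducts on $i$, deleting the first two (empty) boxes and the first mark of a $\gamma$ with $\gamma_1\ge 3$ to get a configuration in $\mathcal{T}(2i-2,i-1,i-1)$ whose first mark sits at $\gamma_2-2\ge 2$, which contradicts the induction hypothesis. Your induction on the mark index $j$, showing $\gamma_j\ge 2j+1$, is that recursion unrolled, and it saves you from checking that the truncated configuration still has every mark not at home.
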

\begin{proof}
We proceed by induction on $i$. For $i=1$, the statement is true since $\mathcal{T}(2, 1, 1)$ only contains the configuration $\gamma=(1)$. 

Next, assume the statement is true for all configurations in $\mathcal{T}(2i-2, i-1, i-1)$. By contradiction, suppose there exists $\gamma =(\gamma_1,\ldots, \gamma_i) \in \mathcal{T}(2i, i, i)$ such that $\gamma_1>1$. In fact, $\gamma_1>2$ since otherwise the first marked box will be in the first home, and there cannot be any at home marked box. Now, let $\gamma'\in \mathcal{T}(2i-2, i-1, i-1)$ be the configuration $\gamma'=(\gamma_2-2,\ldots, \gamma_i-2)$, obtained from $\gamma$ by ignoring the first two boxes of $\gamma$ and deleting the first marked box. For any $k\geq 2$, we know that the $k^{\text{th}}$ marked box of $\gamma$ is not in the $(2k)^{\text{th}}$ box. This implies that the $(k-1)^{\text{th}}$ marked box of $\gamma'$ is not in the $(2k-2)^{\text{th}}$ box of $\gamma'$, and so every marked box of $\gamma'$ is not at home. Since $\gamma'$ is a configuration in $\mathcal{T}(2i-2, i-1, i-1)$, by induction hypothesis, $\gamma'$ has a marked box in its first box. However, by construction, the first marked box of $\gamma'$ must be in its second box or later. Thus, we get a contradiction, and the result follows. 
\end{proof}

Next, we prove the recursive relation from Proposition~\ref{thm: recursion of T} for $T(2i,i,i)$.
\begin{thm}\label{thm: recursion of T- particular case}
For $i\geq 1$,    $T(2i,i,i) = T(2i-1,i-1,i-1)$.
\end{thm}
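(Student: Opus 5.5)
We will show that $T(2i,i,i) = T(2i-1,i-1,i-1)$ via an explicit bijection built on the preceding lemma. Throughout, $b_0=1$, and by the remark after Theorem~\ref{thm: closed form of $T(n,i)$} the $j^{\text{th}}$ home is position $2j$ (the last home being the last box when $n=2i$). So a configuration $\gamma\in\mathcal{T}(2i,i,i)$ is an increasing sequence $\gamma_1<\cdots<\gamma_i$ in $[2i]$ with $\gamma_j\neq 2j$ for all $j$. A configuration $\beta\in\mathcal{T}(2i-1,i-1,i-1)$ is an increasing sequence of length $i-1$ in $[2i-1]$ with $\beta_j\neq 2j$ for all $j$. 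Its homes are still positions $2,4,\dots,2i-2$, and all of these lie inside $[2i-1]$.

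By the preceding lemma, every $\gamma\in\mathcal{T}(2i,i,i)$ has $\gamma_1=1$. We therefore define
\[
\psi(\gamma)=(\gamma_2-1,\gamma_3-1,\ldots,\gamma_i-1),
\]
which deletes the first box together with its marked box. The image is increasing, has $i-1$ entries, and lies in $[2i-1]$, since $2\le\gamma_2$ and $\gamma_i\le 2i$.

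\textbf{Preservation of the not-at-home condition.} The entry $\gamma_j$ with $j\ge2$ becomes the $(j-1)^{\text{th}}$ entry of $\psi(\gamma)$. It is at home there exactly when $\gamma_j-1=2(j-1)$, that is, when $\gamma_j=2j-1$. This does not match the condition $\gamma_j\neq 2j$ in $\gamma$, so a naive shift by one fails. A shift by two would align the homes, but then $\gamma_2=2$ or $\gamma_2=3$ would cause trouble. The fix is a parity adjustment: instead of subtracting $1$, we reflect the remaining block. Define
\[
\psi(\gamma)_{j}=2i+1-\gamma_{i+1-j}, \qquad 1\le j\le i-1,
\]
which complements positions within $[2,2i]$ and relabels them into $[1,2i-1]$. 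Then $\gamma_m\neq 2m$ becomes $\psi_j\neq 2i+1-2(i+1-j)=2j-1$, which is again the wrong parity.

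So we combine the shift with the reflection. Note that $\gamma_1=1$ occupies the first box, and home $1$ (position $2$) is then free to hold $\gamma_2$ only when $\gamma_2\ne 4$.

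\textbf{The counting fallback.} Rather than fight the bijection directly, the cleaner plan is to count both sides. For $\mathcal{T}(2i-1,i-1,i-1)$, Proposition~\ref{thm: recursion of T} applies with $n=2i-1$ and $s=i-1\le\lfloor (n-1)/2\rfloor$. For $\mathcal{T}(2i,i,i)$, condition on $\gamma_1=1$. The remaining entries form an increasing sequence in $\{2,\dots,2i\}$ avoiding $2j$ in the $j^{\text{th}}$ slot for $j\ge2$. Setting $\delta_{j-1}=\gamma_j-1$ gives sequences of length $i-1$ in $[2i-1]$ that avoid the odd values $2(j-1)+1$ in slot $j-1$.

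So the identity reduces to the following claim: the number of increasing $(i-1)$-sequences in $[2i-1]$ avoiding $2t$ in slot $t$ equals the number avoiding $2t+1$ in slot $t$. We will prove this claim by the involution $x\mapsto 2i-x$ on $[2i-1]$ combined with reversing the order of the slots, which sends slot $t$ to slot $i-t$. The forbidden value $2t+1$ in slot $t$ maps to $2i-2t-1$ in slot $i-t$. This still does not match $2(i-t)=2i-2t$, so we expect this parity reconciliation to be the main obstacle.

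If that reconciliation fails, we will instead prove the claim by showing that both counts satisfy the same Pascal-type recursion in the length of the row (deleting the last box, exactly as in Proposition~\ref{thm: recursion of T}) together with matching base cases.
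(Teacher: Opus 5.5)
\textbf{There is a genuine gap: the proposal does not prove the statement.} Your use of the lemma $\gamma_1=1$ is correct. However, it only restates the theorem as: increasing $(i-1)$-sequences in $[2i-1]$ avoiding $2t+1$ in slot $t$ are as numerous as those avoiding $2t$ in slot $t$. After that, every route you offer fails.

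\begin{itemize}
\item The shift of $(\gamma_2,\dots,\gamma_i)$ and the reflection $2i+1-\gamma_{i+1-j}$ fail, as you note yourself.
\item The global involution $x\mapsto 2i-x$ with slot reversal only turns ``avoid $2t+1$ in slot $t$'' into ``avoid $2u-1$ in slot $u$''.
\item The final fallback is false as stated. Let $A(n,s)$ and $B(n,s)$ count increasing $s$-sequences in $[n]$ avoiding $2t$, respectively $2t+1$, in every slot $t$. Deleting the last box gives $A(n,s)=A(n-1,s)+\epsilon\,A(n-1,s-1)$, where $\epsilon=0$ exactly when $n=2s$. But it gives $B(n,s)=B(n-1,s)+\epsilon'\,B(n-1,s-1)$, where $\epsilon'=0$ exactly when $n=2s+1$.
\end{itemize}

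So the two recursions differ, and the two families are genuinely different functions off the line $n=2s+1$; for instance $A(4,2)=2$ but $B(4,2)=5$. Already at your target point, $A(2i-1,i-1)=A(2i-2,i-1)+A(2i-2,i-2)$ while $B(2i-1,i-1)=B(2i-2,i-1)$. Hence no ``same recursion, same base cases'' induction can work.

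\textbf{The missing idea} is to shift one part of $\gamma$ and reflect another, splitting at a first return; this is what the paper does.

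Let $j\ge 1$ be minimal such that boxes $1,\dots,2j$ contain exactly $j$ marks. Minimality, together with $\gamma_1=1$, forces the first $2(k-1)$ boxes to contain at least $k$ marks for $2\le k\le j$. Hence $\gamma_k\le 2k-2$ on this prefix. Deleting box $1$ and its mark sends these marks to $\gamma_k-1\neq 2(k-1)$, so they stay away from home.

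The remaining $i-j$ marks lie in the even-length block $[2j+1,2i]$. Reflecting this block and then shifting by one is the map $x\mapsto 2i+2j-x$. This map preserves parity and sends the $(j+k)$th mark to the $(i-k)$th mark. Therefore $\gamma_{j+k}=2(j+k)$ holds exactly when the image sits at $2(i-k)$.

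The reflection is thus applied only to the suffix, which is aligned with the homes, and the parity obstruction you ran into disappears. The inverse recovers $j$ as the least $k$ with $\beta_k\ge 2k$, with $j=i$ if no such $k$ exists.
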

\begin{proof}
We show that there is a bijection $\phi$ between $\mathcal{T}(2i,i,i)$ and $\mathcal{T}(2i-1,i-1,i-1)$. Consider $\gamma = (\gamma_1, \ldots, \gamma_i)\in \mathcal{T}(2i,i,i)$. We split $\gamma$ into two configurations, $\gamma_c$ and $\gamma_r$, so that $\gamma_c=(\gamma_1,\ldots, \gamma_j)$ is the smallest subsequence such that the configuration has length $2j$ and $j$ marked boxes. We denote by $\gamma_r$ the rest of $\gamma$, $\gamma_r = (\gamma_{j+1}, \ldots, \gamma_i)$, with $\gamma_{k+1}> 2j$, and refer to it as the \emph{reflected block}. 

Now, we are ready to define the bijection. For $\gamma = (\gamma_1, \ldots, \gamma_i)\in \mathcal{T}(2i,i,i)$, the configuration $\phi(\gamma)=\beta = (\beta_1, \ldots, \beta_{i-1})$ is defined by the following algorithm:
\begin{itemize}
    \item The marks in $\gamma_c$ do not move.
    \item The marks in $\gamma_r$ are reflected so that the $(j+k)^{\text{th}}$ mark moves from the $(2j+m)^{\text{th}}$ box to the $(2i-m+1)^{\text{th}}$ box. 
    \item Delete the first marked box of $\gamma$, which is in the first box.
    \item The resulting configuration is $\beta$. 
\end{itemize}
We need to show that $\beta$ is in fact a configuration in $\mathcal{T}(2i-1,i-1,i-1)$. By construction, it is clear that $\beta$ is a configuration of length $2i-1$ with $i-1$ marked boxes. Thus, we only need to see the number of not at home marked boxes in $\beta$. 

For $\gamma_c$, we notice that the marked boxes in this part of the configuration are not at home as marked boxes in $\gamma$ since the $k^{\text{th}}$ marked box is in a position before the $(2k-1)^{\text{th}}$ box, and so in $\beta$, the $(k-1)^{\text{th}}$ marked box is in a position before the $(2k-2)^{\text{th}}$ box. For $\gamma_r$, we want to show that the number of not at home is preserved. Let $(j+k)^{\text{th}}$ be a marked box in $\gamma_r$ in the $(2j+m)^{\text{th}}$ box. If it is not at home, then it is not in the $(2j+2k)^{\text{th}}$, and so $m\neq 2k$. When we apply $\phi$, that box is now the $(i-k)^{\text{th}}$ marked box in $\beta$ and it is in the $(2i-m)^{\text{th}}$ box, which is not the $(2i-2k)^{\text{th}}$ box since $m\neq 2k$. Thus, $\phi$ does preserve the $nat$ statistic, and it is well-defined.

The details about the inverse map are left to the reader.
\end{proof}

\subsection{Independency of $s$}\label{subsect: independency of s}
In this subsection, we show that, for computing $T(n,s,i)$, we can assume that $s=\left\lfloor \frac{n}{2} \right\rfloor$. We start looking at the special case. Next result tells us how to compute $T(2l,l,l-1)$ in terms of the configurations of smaller size, depending on the position of the last marked box. 

\begin{thm} \label{thm: exactly k + l + 1}
Given $l$ and $i<l$, the number of configurations in $\mathcal{T}(2l, l, l-1)$ with last marked box in the $(l+i+1)^{\text{th}}$ box is equal to $T(l+i,i,i)$.
\end{thm}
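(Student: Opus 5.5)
We work with $b_0=1$, as in the preceding subsections, so the $j^{\text{th}}$ home is the box $2j$. A configuration $\gamma\in\mathcal{T}(2l,l,l-1)$ has exactly one marked box at home. Write $p=l+i+1$ for the position of the last marked box. The plan is to induct on $l$, splitting the configurations according to which marked box is at home. Each piece should then either be counted directly or be identified with a smaller instance of the statement.

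\emph{Boundary case $i=l-1$.} Here $p=2l$, so the $l^{\text{th}}$ marked box sits in the $l^{\text{th}}$ home. It is therefore the unique at-home box. Deleting it, together with the last box, leaves a configuration of length $2l-1$ with $l-1$ marked boxes, none of them at home. Conversely, any configuration in $\mathcal{T}(2l-1,l-1,l-1)$ extends uniquely by appending a marked box in position $2l$. This gives the count $T(2l-1,l-1,l-1)=T(l+i,i,i)$ directly, with no appeal to Theorem~\ref{thm: recursion of T- particular case}.

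\emph{General case $i<l-1$: locating the at-home box.} First I determine which box can be at home. The $j^{\text{th}}$ marked box is followed by $l-j$ further marked boxes, the last of which is at position $p$. Hence $\gamma_j\le p-(l-j)=i+1+j$. Being at home requires $2j\le i+1+j$, that is, $j\le i+1$. So the at-home box is the $j^{\text{th}}$ one for some $1\le j\le i+1$.

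\emph{Splitting and recombining.} Cutting $\gamma$ at that home produces two independent pieces.
\begin{itemize}
\item The left piece is boxes $1,\dots,2j-1$. It contains $j-1$ marked boxes, none at home, so it lies in $\mathcal{T}(2j-1,j-1,j-1)$.
\item The right piece is boxes $2j+1,\dots,2l$. Its homes are again the even positions, so it is a configuration in $\mathcal{T}(2(l-j),l-j,l-j)$ whose last marked box is at relative position $p-2j$.
\end{itemize}
This yields a convolution: the desired count equals the sum over $j\le i+1$ of $T(2j-1,j-1,j-1)$ times the number of all-not-at-home configurations of length $2(l-j)$ with last marked box at a prescribed position. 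I would compute that second factor by the same style of argument. I would also use Theorem~\ref{thm: recursion of T- particular case} and the lemma that forces $\gamma_1=1$ in $\mathcal{T}(2i,i,i)$, and peel off the first marked box.

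\emph{Matching with $T(l+i,i,i)$.} On the other side, I would decompose $T(l+i,i,i)$ by iterating Proposition~\ref{thm: recursion of T}, i.e.\ $T(n,i,i)=T(n-1,i,i)+T(n-1,i-1,i-1)$. Each step strips trailing boxes, and the expansion should produce the same sum.

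\emph{Expected obstacle and fallback.} The main obstacle is verifying that the two convolutions agree term by term, in particular keeping track of the shift of the last marked box. If the bookkeeping becomes unwieldy, I would instead look for a direct bijection. The candidate, in the spirit of the flipped block in Proposition~\ref{prop:independency of b_0}, is to delete the $l-i-1$ trailing empty boxes together with the forced marked boxes that are squeezed against the right end, and then reflect that block.
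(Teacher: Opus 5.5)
\textbf{There is a genuine gap in the case $i<l-1$.} Your boundary case $i=l-1$ is correct and is the same as the paper's.

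Splitting at the unique at-home mark is a valid bijective decomposition, and your bound $j\le i+1$ is right. But it does not close the induction. The right factor counts configurations in $\mathcal{T}(2(l-j),l-j,l-j)$ with last mark at relative position $l+i+1-2j$. This has no mark at home, so it is not an instance of the statement for a smaller $l$. After deleting its trailing empty boxes and its last mark, it counts placements of $l-j-1$ marks in $l+i-2j$ boxes with none at home. For $i<l-2$ more than half the boxes are marked, a range where neither Proposition~\ref{thm: recursion of T} nor the closed formula for $T$ is available. Peeling off the forced mark in box $1$, or using the reflection bijection of Theorem~\ref{thm: recursion of T- particular case}, does not obviously track where the last mark sits.

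You never evaluate this factor, and the term-by-term matching you hope for is false. For $l=4$, $i=2$, the nine configurations split as $2+2+5$ according to $j=1,2,3$. Iterating the recursion instead gives $T(6,2,2)=T(5,2,2)+T(4,1,1)+T(4,0,0)=5+3+1$. Only the totals agree, and proving that is the whole content of the theorem. The fallback bijection is not specified, so the general case remains unproved.

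\textbf{The missing idea} is to condition on the \emph{second-to-last} mark rather than on the at-home mark. When $i<l-1$, boxes $l+i+2,\dots,2l$ are empty and $l+i+1\le 2l-1$. So deleting the last mark together with boxes $2l-1$ and $2l$ is a bijection onto the configurations in $\mathcal{T}(2l-2,l-1,l-2)$ whose last mark sits at a position $l+i'$ with $0\le i'\le i$. The unique at-home mark is untouched, and a mark reinserted at $l+i+1\neq 2l$ is never at home. These configurations are instances of the statement for $l-1$, with $i'<l-1$, so the count is $\sum_{i'=0}^{i}T(l-1+i',i',i')$. Proposition~\ref{thm: recursion of T} in the form $T(l+i',i',i')=T(l-1+i',i',i')+T(l-1+i',i'-1,i'-1)$ then telescopes this sum to $T(l+i,i,i)$. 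This is the paper's proof; for $l=4$, $i=2$ its terms are $T(3,0,0),T(4,1,1),T(5,2,2)=1,3,5$, not your $2,2,5$.
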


\begin{proof}
We start by noticing that for $l>1$, any configuration $\gamma=(\gamma_1,\ldots, \gamma_l) \in \mathcal{T}(2l,l,l-1)$ has $\gamma_{l} \geq l+1$ since $\gamma$ has at least one marked box at home and the configuration with all the marked boxes in the first $l$ boxes has no marked boxes at home. 

We proceed by strong induction on $l$. For $l=1$, $\mathcal{T}(2,1,0)$ there is only one configuration $\gamma=(2)$, for which $i=0$, and $T(1,0,0)=1$ since we have the configuration of length 1 and no marked boxes. 
For the induction step, we split into two cases:
\begin{itemize} 
\item Suppose $i=l-1$. We want to count the number of configurations in $\mathcal{T}(2l, l, l-1)$ with the last marked box in the last box. Note that in such a configuration, the last marked box is in the $(2l)^{\text{th}}$ box and so at home. Thus, deleting the last box of $\gamma$ decreases the length of the configuration by 1 and does not change the number of not at home marked boxes,  and we obtain a configuration of length $2l-1$ with $l-1$ marked boxes and $l-1$ of those not at home.  This process can be reversed in a bijective way, and so there are $T(2l-1,l-1,l-1)$ of such configurations. 

\item Suppose $i< l-1$. In this case, we define $\gamma'$ to be the configuration obtained by deleting the last marked box (only the mark, not the box) and the last two boxes of $\gamma$. Note that we only delete two boxes in total.  

Now, we need to look at the second-to-last marked box in $\gamma$, which we assume is in the $j^{\text{th}}$ position. Then, $\gamma' \in \mathcal{T}(2(l-1), l-1, l-2)$ with last marked box in the $(l+i'+1)^{\text{th}}$ box, for some $i'=j-k\geq 0$. By inductive hypothesis, there are $T(l-1+i', i',i')$ of those configurations. Moreover, there is a bijection between the set of configurations $\gamma'$ in $\mathcal{T}(2(l-1), l-1, l-2)$ with the last marked box in the $(l+i'+1)^{\text{th}}$ box and the set of configurations $\gamma$ in $\mathcal{T}(2l,l,l-1)$ with the last marked box in the $(l+i+1)^{\text{th}}$ box and the second-to-last marked box in the $(l+i'+1)^{\text{th}}$ box.

Therefore, the number of configurations in $\mathcal{T}(2l, l, l-1)$ with last marked box in the $(l+i+1)^{\text{th}}$ box by its second-to-last marked box is equal to
$$
\sum_{i'=0}^{i}T(l-1+i',i',i').
$$

By Proposition~\ref{thm: recursion of T}, we have that 
\begin{align*}
\sum_{i'=0}^{i}T(l-1+i',i',i') &= T(l-1,0,0) + 
\sum_{i'=1}^{i}  \left[T(l+i',i',i') - T(l-1+i',i'-1,i'-1)\right] \\
& = T(l+i,i,i),
\end{align*}
where we use that $T(l-1,0,0)=T(l,0,0)=1$, and that it is a telescopic sum. Thus, the result follows for this case too. 
\end{itemize}

\end{proof}

Using Theorem~\ref{thm: exactly k + l + 1} and the recursive formula from Proposition~\ref{thm: recursion of T} we can get and simplify the following telescopic sum:
\begin{align*}
T(2l,l,l-1) &= \sum_{i=0}^{l-1} T(l+i,i,i) = 
T(l+1,0,0) + \sum_{i=1}^{l-1} \left[ T(l+i+1, i,i) - T(l+i, i-1,i-1)\right] \\
&= T(2l,l-1,l-1),
\end{align*}
Thus, we get the independence of the second parameter in the special case as stated below.
\begin{cor}\label{cor: T(n,s,s) = T(n,s+1,s) special case}
    For $l>0$, $T(2l,l,l-1) = T(2l,l-1,l-1)$.
\end{cor}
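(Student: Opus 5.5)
The corollary follows from two ingredients. The first is the decomposition of $\mathcal{T}(2l,l,l-1)$ according to the position of the last marked box, given by Theorem~\ref{thm: exactly k + l + 1}. The second is a telescoping argument using the recursion of Proposition~\ref{thm: recursion of T}.

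\emph{Step 1: the decomposition.} For $l>1$, every $\gamma\in\mathcal{T}(2l,l,l-1)$ has at least one marked box at home. The configuration with all marks in the first $l$ boxes has none at home, so $\gamma_l\geq l+1$. Hence the last marked box lies in position $l+i+1$ for a unique $0\leq i\leq l-1$. Summing the counts from Theorem~\ref{thm: exactly k + l + 1} over these disjoint classes gives
$$T(2l,l,l-1)=\sum_{i=0}^{l-1}T(l+i,i,i).$$
The case $l=1$ is checked by hand: $T(2,1,0)=1=T(2,0,0)$.

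\emph{Step 2: telescoping.} For each $i\geq 1$, rewrite $T(l+i,i,i)$ using Proposition~\ref{thm: recursion of T} applied with $n=l+i+1$, $s=i$, $i=i$:
$$T(l+i+1,i,i)=T(l+i,i,i)+T(l+i,i-1,i-1).$$
Rearranged, $T(l+i,i,i)=T(l+i+1,i,i)-T(l+i,i-1,i-1)$. The hypothesis $i\leq\lfloor (l+i)/2\rfloor$ holds because $i\leq l-1$. For the $i=0$ term, use $T(l,0,0)=1=T(l+1,0,0)$, since there is a unique configuration with no marks. The sum then collapses:
$$T(l+1,0,0)+\sum_{i=1}^{l-1}\bigl[T(l+i+1,i,i)-T(l+i,i-1,i-1)\bigr]=T(2l,l-1,l-1).$$
Only the top term with $i=l-1$ survives.

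\emph{Main obstacle.} The delicate point is checking that each application of the recursion lies in its range of validity. The recursion requires $0<i\leq s\leq\lfloor (n-1)/2\rfloor$. With $n=l+i+1$ and $s=i$, this needs $2i\leq l+i$, i.e.\ $i\leq l$, which holds throughout the sum. We must also avoid the excluded boundary case $T(2s,s,s)$, which is handled separately by Theorem~\ref{thm: recursion of T- particular case}. This boundary case would only arise if $l+i+1=2i$, i.e.\ $i=l+1$, which is outside the range. Once these index checks are in place, the identity is purely formal.
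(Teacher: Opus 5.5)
Your proposal is correct and follows the paper's proof: you decompose $\mathcal{T}(2l,l,l-1)$ by the position of the last marked box via Theorem~\ref{thm: exactly k + l + 1} to get $T(2l,l,l-1)=\sum_{i=0}^{l-1}T(l+i,i,i)$, then telescope with Proposition~\ref{thm: recursion of T} and $T(l,0,0)=T(l+1,0,0)=1$. You also check explicitly that each use of the recursion satisfies $i\le\lfloor (l+i)/2\rfloor$ and never reaches the excluded case $T(2s,s,s)$, which the paper leaves implicit.
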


\begin{remark}
    For the rest of this section, when $s=\left\lfloor \dfrac{n}{2}\right\rfloor$, we assume that $i=\left\lfloor \dfrac{n}{2}\right\rfloor$ and write $\mathcal{T}\left(n,\left\lfloor \dfrac{n}{2}\right\rfloor\right)$ and $T\left(n,\left\lfloor \dfrac{n}{2}\right\rfloor\right)$ instead.
\end{remark}

For the general case, we start with one more definition regarding the concatenation of two configurations and a technical result. 

Given $\gamma \in \mathcal{T}(2s,s,i)$ and $\beta\in \mathcal{T}(n',s',i')$, we define \demph{$\gamma \cdot \beta$} as the configuration obtained by concatenating the boxes of $\beta$ after the boxes of $\gamma$. The resulting configuration has length $n+n'$ and $s+s'$ marked boxes in total. Moreover, we know that the at home marked boxes of $\gamma$ continue to be at home in $\gamma \cdot \beta$. In general, we can not know what marked boxes of $\gamma$ continue to be at home in $\gamma\cdot \beta$. However, the following result studies the particular case we need.

\begin{example}
In Figure~\ref{fig:concatenation}, we illustrate an example of how to concatenate a configuration $\gamma\in \mathcal{T}(4,2,1)$ and $\beta\in \mathcal{T}(7,2,1)$, so that $\gamma\cdot\beta \in \mathcal{T}(11,4,2)$.

\begin{figure}[H]
    \centering
    \scalebox{0.5}{
\begin{tikzpicture}
    \fill[blue!30] (9,0) rectangle (10,1);
    \fill[blue!30] (11,0) rectangle (12,1);
    \fill[blue!30] (13,0) rectangle (14,1);
    \draw[step=1cm,black,very thin] (9,0) grid (16,1);
    \node at (10.5,0.5) {\scalebox{3}{$\times$}};
    \node at (15.5, 0.5) {\scalebox{3}{$\times$}};
    \node at (8.25,0.5) {\scalebox{2}{$\beta=$}};
    
    \fill[blue!30] (0,0) rectangle (1,1);
    \fill[blue!30] (2,0) rectangle (3,1);
    \draw[step=1cm,black,very thin] (0,0) grid (4,1);
    \node at (0.5,0.5) {\scalebox{3}{$\times$}};
    \node at (3.5, 0.5) {\scalebox{3}{$\times$}};
    \node at (-0.75,0.5) {\scalebox{2}{$\gamma=$}};

    \fill[blue!30] (1,-2.5) rectangle (2,-1.5);
    \fill[blue!30] (3,-2.5) rectangle (4,-1.5);
    \fill[blue!30] (5,-2.5) rectangle (6,-1.5);
    \fill[blue!30] (7,-2.5) rectangle (8,-1.5);
    \fill[blue!30] (9,-2.5) rectangle (10,-1.5);
    \foreach \x in {1,...,12} {
  \draw[black,very thin] (\x,-2.5) rectangle (\x+1,-1.5);}
    \node at (1.5,-2) {\scalebox{3}{$\times$}};
    \node at (4.5, -2) {\scalebox{3}{$\times$}};
    \node at (6.5,-2) {\scalebox{3}{$\times$}};
    \node at (11.5, -2) {\scalebox{3}{$\times$}};
    \node at (-0.75,-2) {\scalebox{2}{$\gamma \cdot \beta=$}};
\end{tikzpicture}}
    \caption{Illustration of the concatenation operation for certain configurations.}
    \label{fig:concatenation}
\end{figure}
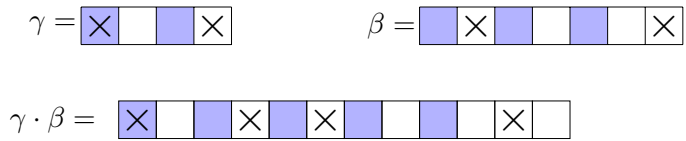
\end{example}

\begin{lemma}\label{lemma: gluing together configurations}
Let $\gamma \in \mathcal{T}(2s,s,i)$ and $\beta \in \mathcal{T}(n, s', i')$, with $0\leq i \leq s$, $0\leq i'\leq s'$ and $s'\leq \left\lfloor \frac n2\right\rfloor$. Then, $\gamma\cdot \beta \in \mathcal{T}(n+2s, s+s', i+i')$.   
\end{lemma}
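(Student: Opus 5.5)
The plan is a direct position count, using the convention $b_0=1$ fixed in the remark after Proposition~\ref{prop:independency of b_0}. With $b_0=1$, the $j^{\text{th}}$ home of a configuration with barriers starting at position $1$ is the box at position $2j$. The proof then has three steps.

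\emph{Step 1: lengths and number of marks.} The concatenation $\gamma\cdot\beta$ has $2s+n$ boxes and $s+s'$ marked boxes. Its marked boxes are, in order, the $s$ marked boxes of $\gamma$ in their original positions $\gamma_1,\ldots,\gamma_s\le 2s$. These are followed by the marked boxes of $\beta$, shifted to positions $2s+\beta_1,\ldots,2s+\beta_{s'}$. This is the only bookkeeping needed.

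\emph{Step 2: the marks of $\gamma$.} For $1\le j\le s$, the $j^{\text{th}}$ marked box of $\gamma\cdot\beta$ is the $j^{\text{th}}$ marked box of $\gamma$. Its home in $\gamma\cdot\beta$ is position $2j$, which is exactly its home in $\gamma$. So it is at home in $\gamma\cdot\beta$ if and only if it is at home in $\gamma$. Since $\gamma$ has length exactly $2s$, all of its homes $2,4,\ldots,2s$ lie inside $\gamma$. This is the point where the hypothesis that $\gamma$ has length exactly $2s$ (rather than arbitrary length) is used, and it is the reason the statement works in this special case. These marks therefore contribute exactly $i$ not-at-home boxes.

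\emph{Step 3: the marks of $\beta$.} For $1\le j\le s'$, the $(s+j)^{\text{th}}$ marked box of $\gamma\cdot\beta$ sits at position $2s+\beta_j$. Its home is position $2(s+j)=2s+2j$. So it is at home if and only if $\beta_j=2j$, that is, if and only if the $j^{\text{th}}$ marked box of $\beta$ is at home in $\beta$. This contributes $i'$ not-at-home boxes, so $nat(\gamma\cdot\beta)=i+i'$.

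The only step needing care is confirming that homes are well defined for $\beta$ and for the combined configuration. The hypothesis $s'\le\lfloor n/2\rfloor$ guarantees that all homes $2j\le 2s'\le n$ of $\beta$ lie within $\beta$. In the edge case where $n$ is even and $s'=n/2$, the remark after Theorem~\ref{thm: closed form of $T(n,i)$} places the last home at the last box, position $n=2s'$, which agrees with the formula $2j$. Likewise, the homes $2s+2j$ of the combined configuration lie within its length $2s+n$. I expect no real obstacle beyond stating this indexing cleanly.
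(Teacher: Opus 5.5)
Your argument is correct and is essentially the paper's proof: the marks of $\gamma$ keep both their positions and their homes $2j$, while the $j^{\text{th}}$ mark of $\beta$ and its home are both shifted by exactly $2s$, so at-home status is preserved on each side and $nat(\gamma\cdot\beta)=i+i'$. One small correction to your commentary: Step 2 would hold for any prefix, and the hypothesis that $\gamma$ has length exactly $2s$ with $s$ marks is really used in Step 3, where the position shift $2s$ must match the home shift $2\cdot s$ coming from the index shift by $s$.
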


\begin{proof}
By definition, a marked box in $\gamma$ is at home if and only if it is at home in $\gamma\cdot \beta$.
We want to show that the same statement is true for $\beta$. That is, a marked box in $\beta$ is at home if and only if it is at home in $\gamma \cdot \beta$. 
Suppose that the $m^{\text{th}}$ marked box in $\beta$ is at home, and so $\beta_m = 2m$. Then, this marked box corresponds to the $(s+m)^{\text{th}}$ marked box in $\gamma \cdot \beta$ and so $(\gamma \cdot \beta)_{s+m}=2s+2m$. Thus, it is also at home in $\gamma \cdot \beta$. For the other direction, suppose that the $m^{\text{th}}$ marked box in $\gamma \cdot \beta$ is at home, with $m=s+m'$ for some $m'\geq 1$ (so that it's in the $\beta$ area). Then, $(\gamma\cdot \beta)_{m} = 2m = 2s +2m'$. Now, ignoring the part corresponding to $\gamma$, this is the $(m')^{\text{th}}$ marked box in $\beta$, and it is in the $(2m')^{\text{th}}$ box, meaning that it is an at home marked box in $\beta$. This shows that the $nat$ statistic is preserved in this case. 
\end{proof}

Now we are ready to prove the independence of $s$ in the general case. 

\begin{thm}\label{prop: reduce to $T(n,s,s) = T(n,s+1,s)$}
For $n$, $s$, and $i$ such that $i<s < \lfloor \frac{n}{2}\rfloor$, 
$$T(n,s,i) = T(n,s+1,i) \qquad \text{ and } \qquad T(n,s,s) = T(n,s+1,s).$$
\end{thm}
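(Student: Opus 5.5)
We prove both identities by induction on $n$, using the recursion of Proposition~\ref{thm: recursion of T} on both sides and treating the configurations whose parameters leave its range separately. Throughout, $b_0=1$: the $j^{\text{th}}$ home is box $2j$ and the barriers are the odd boxes. The base cases are small $n$, where $s+1\le\lfloor n/2\rfloor$ forces $n\ge 4$; we check $n=4,5$ directly by listing configurations.

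For the inductive step, fix $i<s<\lfloor n/2\rfloor$. Then $s+1\le\lfloor n/2\rfloor$, and Proposition~\ref{thm: recursion of T} applies to $T(n,s+1,i)$ whenever $s+1\le\lfloor (n-1)/2\rfloor$. In that case we expand both sides by deleting the last box:
\[
T(n,s,i)=T(n-1,s,i)+T(n-1,s-1,i-1),\qquad T(n,s+1,i)=T(n-1,s+1,i)+T(n-1,s,i-1).
\]
We then match the terms. The pair $T(n-1,s,i)$ and $T(n-1,s+1,i)$ agrees by induction, since $s<\lfloor (n-1)/2\rfloor$. The pair $T(n-1,s-1,i-1)$ and $T(n-1,s,i-1)$ agrees by induction, using the first identity if $i-1<s-1$ and the second if $i-1=s-1$. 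The second statement, $T(n,s,s)=T(n,s+1,s)$, is handled the same way: its expansion gives $T(n-1,s,s)+T(n-1,s-1,s-1)$ against $T(n-1,s+1,s)+T(n-1,s,s-1)$, and each pair is covered by the inductive hypothesis. Degenerate terms such as $T(m,0,0)=1$ are checked by hand.

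The boundary case is $s+1=\lfloor n/2\rfloor$ with $n$ even, say $n=2l$ and $s=l-1$. Here the recursion cannot be applied to $T(2l,l,\cdot)$, because the $(s+1)^{\text{th}}$ home is the last box (see the Remark after Theorem~\ref{thm: closed form of $T(n,i)$}). For $i=l-1$ this is exactly Corollary~\ref{cor: T(n,s,s) = T(n,s+1,s) special case}. For $i<l-1$ we would imitate the proof of Theorem~\ref{thm: exactly k + l + 1} and split configurations in $\mathcal{T}(2l,l,i)$ according to the first index $2m$ at which the prefix is a ``closed'' block of length $2m$ with $m$ marks. Lemma~\ref{lemma: gluing together configurations} lets us write such a configuration as $\gamma\cdot\beta$ with $\gamma\in\mathcal{T}(2m,m,i_1)$ and $\beta\in\mathcal{T}(2l-2m,l-m,i-i_1)$. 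Doing the same decomposition for $\mathcal{T}(2l,l-1,i)$ and invoking induction on the smaller pieces should match the two counts block by block. If instead $n$ is odd, then $s+1\le\lfloor (n-1)/2\rfloor$ and the first case applies.

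I expect the boundary case $n=2l$, $i<l-1$ to be the main obstacle. The decomposition into closed blocks requires a unique factorization, and the sums over $m$ must telescope via Theorem~\ref{thm: recursion of T- particular case} to land exactly on $T(2l,l-1,i)$. As a fallback, I would first try to prove directly that deleting the last box yields $T(2l,l,i)=T(2l-1,l,i)+T(2l-1,l-1,i-1)$ even in this case. The only subtlety is whether the final mark, at box $2l$, counts as at home; with $i<l-1$ this can be tracked by hand. After that, the same matching as in the general case closes the argument.
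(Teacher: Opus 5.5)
Your inductive step away from the boundary is fine, but there is a genuine gap exactly where you suspect one, and it is not a side issue. For odd $n=2l+1$ and $s=l-1$, your first matched pair is $T(2l,l-1,i)$ versus $T(2l,l,i)$, which is the boundary case at $n-1$. So the whole induction rests on $T(2l,l,i)=T(2l,l-1,i)$ for $i<l-1$, and you never prove this.

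Your fallback recursion is false. With $b_0=1$ the $l^{\text{th}}$ home is box $2l$, so a mark in the last box is \emph{at} home. Deleting that box therefore gives $T(2l,l,i)=T(2l-1,l,i)+T(2l-1,l-1,i)$, not $T(2l-1,l,i)+T(2l-1,l-1,i-1)$. For example, $T(6,3,1)=5$, while $T(5,3,1)+T(5,2,0)=1+1=2$. With the correct recursion, comparing against $T(2l,l-1,i)=T(2l-1,l-1,i)+T(2l-1,l-2,i-1)$ would require $T(2l-1,l,i)=T(2l-1,l-2,i-1)$, which lies outside your inductive hypothesis. Your block strategy, as written (``should match block by block'', ``sums must telescope''), is not yet an argument.

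The missing idea that makes your block decomposition work is this: if the $j^{\text{th}}$ mark is at home, then the prefix of length $2j$ is closed. Hence an irreducible closed block (one with no proper closed prefix) of length $2m$ can have only its last mark at home, so its $nat$ is at least $m-1$.

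Now split at the first closed prefix. The irreducible prefix is common to both sides. The suffix lies in $\mathcal{T}(2(l-m),l-m,i')$ or in $\mathcal{T}(2(l-m),l-m-1,i')$, with $i'\le i-m+1\le l-m-1$. There the counts agree by induction, and for $i'=l-m-1$ by the Corollary $T(2l,l,l-1)=T(2l,l-1,l-1)$. The term $m=l$ vanishes. Every configuration in $\mathcal{T}(2l,l-1,i)$ has a closed prefix, because it has an at-home mark. No telescoping is needed.

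The paper avoids the boundary problem for $i<s$ differently. It cuts $\gamma$ at its \emph{last} at-home mark $j$, so the suffix lies in $\mathcal{T}(n-2j,s-j,s-j)$ with all marks not at home. It transports that suffix by a bijection realizing the $i=s$ identity, then glues it back on. Only the $i=s$ identity needs the apply-the-recursion-twice induction, and its boundary case is exactly that Corollary.
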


\begin{proof}
We start by noticing that in Corollary~\ref{cor: T(n,s,s) = T(n,s+1,s) special case} we prove that $T(n,s,s) = T(n,s+1,s)$ for $n$ is even and $s = \frac{n}{2}-1$. We prove the claim holds in general by induction on $n$.

 For $n=2$, we have that $T(2,0,0) = T(2,1,0) = 1$ and the statement follows. Note also that for $i=0$, $T(n,s',0)=1$ for any $s' \leq \lfloor \frac{n}{2}\rfloor$, as there is always only one possible configuration with all marks at home.
For the inductive step, assume that the statement is true for all $l<n$. Applying Proposition~\ref{thm: recursion of T} twice,  we have that
$$T(n,s,s) = T(n-1,s,s) + T(n-1,s-1,s-1) = T(n-1,s+1,s) + T(n-1,s,s-1) = T(n,s+1,s),$$ 
where the middle equality follows from the inductive hypothesis, and the base case result follows.

Now, we proceed by strong induction, and so we assume that 
$T(k,s',s') = T(k,s'+1,s')$ for all $k < n$ and $s' < \lfloor \frac k2 \rfloor$. This implies that there exists a bijection from $\mathcal{T}(k,s',s')$ to $T(k,s'+1,s')$. For each pair $(k,s')$, we fix one such bijection and denote it by $\varphi_{k,s'}$. We construct $\phi$ from $\mathcal{T}(n,s,i)$ to $\mathcal{T}(n,s+1,i)$ in terms of these $\varphi_{k,s'}$ maps. Given $\gamma\in \mathcal{T}(n,s,i)$, we split it into two configurations $\gamma_l$ and $\gamma_r$, where $\gamma_l$ is the configuration containing every box in $\gamma$ up until the block that contains the last marked box at home and $\gamma_r$ is the remaining of the configuration. Then, if the last marked box at home is the $j^{\text{th}}$ marked box, we have that $\gamma_l\in\mathcal{T}(2j, j, j-s+i)$ and $\gamma_r\in \mathcal{T}(n-2j,s-j, s-j)$.
Then, we define $\gamma_r'$ as the image of $\gamma_r$ by its bijection $\varphi$, $\gamma_r'= \varphi_{n-2j, s-j}(\gamma_r)$, so that $\gamma_r' \in \mathcal{T}(n-2j, s-j+1,s-j)$. Finally, we define $\phi(\gamma)=\gamma_l \cdot \gamma_r'$. By Lemma~\ref{lemma: gluing together configurations}, $\phi(\gamma)\in \mathcal{T}(n, s+1, i)$. 

The details about the inverse map are left to the reader.
\end{proof}

Thus, we have the following consequence.
\begin{cor}
For $1\leq i \leq s \leq \left\lfloor \frac{n-1}{2} \right\rfloor$, $T(n,s,i) = T\left(n,i,i\right)$.
\end{cor}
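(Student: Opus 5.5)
We prove the corollary by iterating Theorem~\ref{prop: reduce to $T(n,s,s) = T(n,s+1,s)$} downward in the second parameter, always keeping the third parameter $i$ fixed. Fix $n$ and $1\leq i\leq s\leq \lfloor \frac{n-1}{2}\rfloor$. If $s=i$ there is nothing to prove. Otherwise, we want to pass from $T(n,s,i)$ to $T(n,s-1,i)$, then to $T(n,s-2,i)$, and so on, until the second parameter equals $i$.

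For each $s'$ with $i\leq s'<s$, we have $s'<s\leq \lfloor \frac{n-1}{2}\rfloor\leq \lfloor \frac n2\rfloor$, so the hypothesis $s'<\lfloor \frac n2\rfloor$ of Theorem~\ref{prop: reduce to $T(n,s,s) = T(n,s+1,s)$} holds. We then apply the appropriate part of that theorem.
\begin{itemize}
\item If $i<s'$, the first equality gives $T(n,s',i)=T(n,s'+1,i)$.
\item If $s'=i$, the second equality gives $T(n,i,i)=T(n,i+1,i)$.
\end{itemize}
Chaining these equalities for $s'=i,i+1,\ldots,s-1$ yields
$$T(n,i,i)=T(n,i+1,i)=T(n,i+2,i)=\cdots=T(n,s,i),$$
which is the claim.

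The only point requiring care is checking that each step stays inside the range of validity of Theorem~\ref{prop: reduce to $T(n,s,s) = T(n,s+1,s)$}. This means $s'<\lfloor n/2\rfloor$ at every step, which follows from $s\leq\lfloor\frac{n-1}{2}\rfloor$. It also means the convention $b_0=1$ is in force, which is justified by Proposition~\ref{prop:independency of b_0}. The condition $i\geq 1$ is not actually needed for the chain; it only matches the range of Theorem~\ref{thm: closed form of $T(n,i)$}, since $T(n,s,0)=1$ trivially.

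This argument has no real obstacle, since the substance lies in Theorem~\ref{prop: reduce to $T(n,s,s) = T(n,s+1,s)$}; it is just a finite induction on $s-i$.
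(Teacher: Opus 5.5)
Your proposal is correct and takes the same route as the paper, which records this corollary as an immediate consequence of the preceding theorem: $T(n,s,i)=T(n,s+1,i)$ for $i<s<\lfloor n/2\rfloor$, together with $T(n,s,s)=T(n,s+1,s)$. Your chain $T(n,i,i)=T(n,i+1,i)=\cdots=T(n,s,i)$ is exactly that iteration, and your check that every intermediate $s'$ satisfies $s'<\lfloor n/2\rfloor$ (because $s\leq\lfloor\frac{n-1}{2}\rfloor$) is the only verification needed.
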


\begin{remark}
For the rest of this section, we assume that $s=i$, and so we abbreviate $\mathcal{T}\left(n,s,i\right)$ and $T(n,s,i)$ to $\mathcal{T}(n,i)$ and $T(n,i)$, respectively.
\end{remark}

We finish this subsection by rewriting the recursive formula using the abbreviated notation.
\begin{cor}
For $1\leq i \leq \left\lfloor \frac{n-1}{2} \right\rfloor$, 
$$T(n,i) = T(n-1,i) + T(n-1,i-1) \qquad \text{ with } \qquad T(n,0) = T(n-1,0)=1.$$ 
\end{cor}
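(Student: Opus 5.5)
The corollary is the recursion of Proposition~\ref{thm: recursion of T}, rewritten in the abbreviated notation. The plan is to convert the second parameter on both sides using the independence of $s$ established in the previous corollary.

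Fix $1\leq i\leq \left\lfloor \frac{n-1}{2}\right\rfloor$. First apply Proposition~\ref{thm: recursion of T} with $s=i$; its hypothesis $0<i\leq s\leq \lfloor\frac{n-1}{2}\rfloor$ holds. This gives
$$T(n,i,i)=T(n-1,i,i)+T(n-1,i-1,i-1).$$
The left side is $T(n,i)$ by definition of the abbreviation. The second term on the right is $T(n-1,i-1)$ directly. For $i-1=0$ this is also consistent, since there is a unique configuration with all marks at home.

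The first term on the right needs care, because the abbreviation $T(n-1,i)$ only makes sense when $i$ lies in the admissible range for length $n-1$. If $i\leq\lfloor\frac{n-2}{2}\rfloor$, then $T(n-1,i,i)$ is $T(n-1,i)$ in the standard range, and the previous corollary applies if a change of $s$ is needed. The only delicate case is $n$ odd with $i=\frac{n-1}{2}$. Then $n-1=2i$, and the term is $T(2i,i,i)$, which is exactly the boundary case that the remark in the previous subsection agreed to denote $T\bigl(n-1,\lfloor\frac{n-1}{2}\rfloor\bigr)$, that is, $T(n-1,i)$. So the identity holds in this case as well. I would also point out that Theorem~\ref{thm: recursion of T- particular case} makes this boundary value concrete, since $T(2i,i,i)=T(2i-1,i-1,i-1)$.

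For the initial condition, $T(n,0)$ counts configurations with no marked boxes not at home and $s=0$. There is exactly one such configuration, the empty one, so $T(n,0)=T(n-1,0)=1$.

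The main obstacle is this bookkeeping at the boundary $n-1=2i$. There Proposition~\ref{thm: recursion of T} does not itself apply to $T(2i,i,i)$, so one must check carefully that the notational convention matches the term the recursion produces. The rest is a direct substitution.
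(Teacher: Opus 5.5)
Your proposal is correct and matches the paper's proof: apply Proposition~\ref{thm: recursion of T} with $s=i$, read off each term in the abbreviated notation, and note that the $i=0$ case holds by definition. Your extra bookkeeping for the boundary case $n-1=2i$, which falls under the paper's convention for $s=\lfloor n/2\rfloor$, is a reasonable clarification the paper leaves implicit. The appeal to the previous corollary is not needed, though, since $T(n-1,i,i)$ is $T(n-1,i)$ by definition.
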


\begin{proof}
The case when $i=0$ follows by definition. For $1\leq i \leq \left\lfloor \frac{n-1}{2} \right\rfloor$, we have that 
$$T(n,i) = T(n,i,i) = T(n-1,i,i) + T(n-1,i-1,i-1) = T(n-1,i) + T(n-1,i-1).$$
\end{proof}

\subsection{Proof of Theorem~\ref{thm: closed form of $T(n,i)$}}
Let's start by recalling the statement of Theorem~\ref{thm: closed form of $T(n,i)$}.
\begin{thm*}
Let $s\leq \left\lfloor \frac n2 \right\rfloor$. For $i=0$, $T(n,s,0,b_0) = 1$, and for $0< i \leq s$, 
$$
T(n,s,i,b_0) = \binom{n}{i}-\binom{n}{i-1}.
$$
\end{thm*}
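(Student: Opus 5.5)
The plan is to reduce everything to the two-parameter function $T(n,i)$ and then argue by induction on $n$ using the Pascal-type recursion of the last corollary. By Proposition~\ref{prop:independency of b_0} we may take $b_0=1$. By the corollary of Theorem~\ref{prop: reduce to $T(n,s,s) = T(n,s+1,s)$} we have $T(n,s,i)=T(n,i)$ for $1\le i\le s\le\lfloor\frac{n-1}{2}\rfloor$. For $n$ even and $s=n/2$ we use Corollary~\ref{cor: T(n,s,s) = T(n,s+1,s) special case} together with the same gluing argument. The case $i=0$ is immediate: the configuration with every marked box at home is the only element of $\mathcal{T}(n,s,0,b_0)$, so $T(n,s,0,b_0)=1$. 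It therefore remains to show that
\[
T(n,i)=\binom{n}{i}-\binom{n}{i-1},\qquad 1\le i\le \left\lfloor \tfrac n2\right\rfloor .
\]

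We proceed by induction on $n$. The base cases are small values such as $n=2$, where $T(2,1)=1=\binom21-\binom20$, and $n=1$, where the range of $i$ is empty.

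For the inductive step, first take $1\le i\le\lfloor\frac{n-1}{2}\rfloor$. The recursion gives $T(n,i)=T(n-1,i)+T(n-1,i-1)$. Both indices lie in the range $\le\lfloor\frac{n-1}{2}\rfloor$, so the induction hypothesis applies to each term. When $i-1=0$ we use $T(n-1,0)=1=\binom{n-1}{0}-\binom{n-1}{-1}$, which is consistent with the convention $\binom{m}{-1}=0$. Pascal's rule applied separately to $\binom{n-1}{i}+\binom{n-1}{i-1}$ and to $\binom{n-1}{i-1}+\binom{n-1}{i-2}$ then gives
\[
T(n,i)=\left[\binom{n-1}{i}+\binom{n-1}{i-1}\right]-\left[\binom{n-1}{i-1}+\binom{n-1}{i-2}\right]=\binom{n}{i}-\binom{n}{i-1}.
\]

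The remaining case, which I expect to be the main obstacle, is $n=2i$ even with $i=n/2$. Here the recursion of Proposition~\ref{thm: recursion of T} is unavailable, so we use Theorem~\ref{thm: recursion of T- particular case} instead: $T(2i,i,i)=T(2i-1,i-1,i-1)$. Since $i-1\le\lfloor\frac{2i-2}{2}\rfloor$, the induction hypothesis gives $\binom{2i-1}{i-1}-\binom{2i-1}{i-2}$. We then check the identity
\[
\binom{2i-1}{i-1}-\binom{2i-1}{i-2}=\binom{2i}{i}-\binom{2i}{i-1},
\]
which is the difference of Catalan-type numbers. Writing $\binom{2i}{i}=2\binom{2i-1}{i-1}$ and $\binom{2i}{i-1}=\binom{2i-1}{i-1}+\binom{2i-1}{i-2}$, the right-hand side becomes $\binom{2i-1}{i-1}-\binom{2i-1}{i-2}$, as required.

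The delicate point is bookkeeping rather than this identity. We must make sure that the $s=\lfloor n/2\rfloor$ boundary case (with the convention that the last home is the final box) is correctly covered by the reductions to $T(n,i)$. We must also make sure the induction ordering lets us use values at $n-1$ for all relevant $i$, including the boundary value $i=\lfloor\frac{n-1}{2}\rfloor$ when $n-1$ is even. If the reduction in $s$ fails exactly at $s=\lfloor n/2\rfloor$, the fallback is to use Corollary~\ref{cor: T(n,s,s) = T(n,s+1,s) special case} directly together with the $i=s$ computation above.
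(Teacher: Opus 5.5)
Your proposal is correct and follows the same route as the paper. You reduce to $T(n,i)$ via the independence of $b_0$ and $s$, induct on $n$ using $T(n,i)=T(n-1,i)+T(n-1,i-1)$ with Pascal's rule when $n\neq 2i$, and handle $n=2i$ through $T(2i,i)=T(2i-1,i-1)$ and the identity $\binom{2i-1}{i-1}-\binom{2i-1}{i-2}=\binom{2i}{i}-\binom{2i}{i-1}$. The only differences are cosmetic: you merge both cases into one induction on $n$ with base case $n=2$ (instead of the paper's $n=3$), which makes the use of the boundary values $T\bigl(n-1,\lfloor\frac{n-1}{2}\rfloor\bigr)$ explicit.
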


Using the results from Sections~\ref{subsect: independency of b_0} and~\ref{subsect: independency of s}, Theorem~\ref{thm: closed form of $T(n,i)$} is equivalent to the following result.
\begin{thm}\label{thm: abbreviated closed form of $T(n,i)$}
For $1\leq i\leq \left\lfloor \frac n2 \right\rfloor$, $$T(n,i) =\binom{n}{i}-\binom{n}{i-1} \qquad \text{ and } \qquad  T(n,0)=1.$$
\end{thm}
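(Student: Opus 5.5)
We prove the statement by induction on $n$, using the recursion in the last corollary, $T(n,i)=T(n-1,i)+T(n-1,i-1)$ for $1\leq i\leq \lfloor\frac{n-1}{2}\rfloor$, together with the special boundary recursion of Theorem~\ref{thm: recursion of T- particular case}, $T(2i,i,i)=T(2i-1,i-1,i-1)$. The case $T(n,0)=1$ is immediate: there is exactly one configuration with all marked boxes at home. For the base, $n=1$ leaves no $i\geq 1$ to check. For $n=2$ and $i=1$ we have $T(2,1)=1=\binom{2}{1}-\binom{2}{0}$, since $\mathcal{T}(2,1,1)=\{(1)\}$.

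For the inductive step we fix $n\geq 3$ and assume the formula for all smaller lengths. We split into two ranges of $i$.

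\emph{Generic range} $1\leq i\leq \lfloor\frac{n-1}{2}\rfloor$. The recursion gives $T(n,i)=T(n-1,i)+T(n-1,i-1)$. Here $i\leq \lfloor\frac{n-1}{2}\rfloor$, so the induction hypothesis applies to $T(n-1,i)$, and it also applies to $T(n-1,i-1)$.
\begin{itemize}
\item If $i\geq 2$, we use the closed form for both terms:
\[
T(n,i)=\left[\binom{n-1}{i}+\binom{n-1}{i-1}\right]-\left[\binom{n-1}{i-1}+\binom{n-1}{i-2}\right]=\binom{n}{i}-\binom{n}{i-1}
\]
by Pascal's rule.
\item If $i=1$, the second term is $T(n-1,0)=1$. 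We get $T(n,1)=(n-1-1)+1=n-1=\binom{n}{1}-\binom{n}{0}$. This is consistent because $\binom{n-1}{0}-\binom{n-1}{-1}=1$, so the convention $\binom{m}{-1}=0$ makes the $i=1$ case fit the same computation.
\end{itemize}

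\emph{Boundary range} $i=\lfloor\frac n2\rfloor>\lfloor\frac{n-1}{2}\rfloor$. This happens only when $n=2i$ is even. Theorem~\ref{thm: recursion of T- particular case} gives $T(2i,i)=T(2i-1,i-1)$. By induction,
\[
T(2i-1,i-1)=\binom{2i-1}{i-1}-\binom{2i-1}{i-2}.
\]
We must check that this equals $\binom{2i}{i}-\binom{2i}{i-1}$. Both sides are Catalan-type numbers: by Pascal,
\[
\binom{2i}{i}-\binom{2i}{i-1}=\left[\binom{2i-1}{i}+\binom{2i-1}{i-1}\right]-\left[\binom{2i-1}{i-1}+\binom{2i-1}{i-2}\right]=\binom{2i-1}{i}-\binom{2i-1}{i-2},
\]
and $\binom{2i-1}{i}=\binom{2i-1}{i-1}$ by symmetry. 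So the boundary case reduces to a one-line binomial check. For $i=1$, both sides are $T(2,1)=T(1,0)=1$, which matches.

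The main obstacle is the bookkeeping rather than the computation. We must make sure that every recursive call lands in a range where the hypothesis is valid, in particular that $i\leq\lfloor\frac{n-1}{2}\rfloor$ implies $i-1\leq\lfloor\frac{n-2}{2}\rfloor$ and $i\leq\lfloor\frac{n-1}{2}\rfloor$ for length $n-1$; both are immediate. Given the earlier independence results, which reduce $T(n,s,i,b_0)$ to $T(n,i)$, this proves Theorem~\ref{thm: abbreviated closed form of $T(n,i)$} and hence Theorem~\ref{thm: closed form of $T(n,i)$}.
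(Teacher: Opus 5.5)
Your proof is correct and follows essentially the same route as the paper: induction on $n$, using the recursion $T(n,i)=T(n-1,i)+T(n-1,i-1)$ with Pascal's rule when $i\le\lfloor\frac{n-1}{2}\rfloor$, and using the boundary identity $T(2i,i)=T(2i-1,i-1)$ together with $\binom{2i-1}{i}=\binom{2i-1}{i-1}$ when $n=2i$. Your bookkeeping is in fact slightly tidier than the paper's: you use a single strong induction covering both ranges, treat the $i=1$ case explicitly, and write the second Pascal bracket correctly as $\binom{n-1}{i-1}+\binom{n-1}{i-2}$.
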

\begin{proof}
For $i=0$, $\mathcal{T}(n,0)$ has exactly one configuration, the one with every marked box at home. 

For $0<i\leq \left\lfloor \frac n2 \right\rfloor$ and $n\neq 2i$, we proceed by induction on $n$. 
The base case is $n=3$, for which we have that $T(3,1) = 2 = \binom{3}{1}-\binom{3}{0}$.

Now, assume the statement is true for all $\ell <n$. By Proposition~\ref{thm: recursion of T}, we have that
$$T(n,i) = T(n-1,i-1) + T(n-1,i) = \binom{n-1}{i-1} - \binom{n-1}{i-2} + \binom{n-1}{i} - \binom{n-1}{i-1}.$$ 
Reordering these terms according to their sign and using the recursive formula for binomial coefficients, we have that 
$$T(n,i) = \left[\binom{n-1}{i} + \binom{n-1}{i-1}\right] - \left[\binom{n-1}{i-1} + \binom{n-1}{i-1}\right] = \binom{n}{i} - \binom{n}{i-1}.$$

For $n=2i$, we know that $T(2i,i) = T(2i-1,i-1)$ by Theorem~\ref{thm: recursion of T- particular case}, and so by inductive hypothesis, we get that
\begin{align*}
    T(2i,i) &= T(2i-1,i-1) = \binom{2i-1}{i-1} - \binom{2i-1}{i-2}.
\end{align*}
Applying the symmetry of the binomial coefficients and the recursive formula for binomial coefficients, we have that 
\begin{align*}
    T(2i,i) &= \binom{2i-1}{i-1} + \binom{2i-1}{i} - \binom{2i-1}{i} - \binom{2i-1}{i-2} \\
    &=\binom{2i-1}{i} + \binom{2i-1}{i-1} -\left[ \binom{2i-1}{i-1} + \binom{2i-1}{i-2} \right]= \binom{2i}{i} - \binom{2i}{i-1}.
\end{align*}
Thus, the result follows.
\end{proof}

\printbibliography

\end{document}